\DeclareMathAlphabet{\pazocal}{OMS}{zplm}{m}{n}
\DeclareMathAlphabet{\pazocalbf}{OMS}{cmsy}{b}{n}
\preto\subequations{\ifhmode\unskip\fi}
	\setlist[itemize]{nosep, topsep=0pt, wide = 1em, leftmargin=*}	
\renewcommand\paragraph{%
  \@startsection{paragraph}
    {4}
    {\z@}
    {3.25ex \@plus1ex \@minus.2ex}
    {-1em}
    {\normalfont\sectfont\normalsize\bfseries\maybe@addperiod}%
}
\newcommand{\maybe@addperiod}[1]{%
  #1\@addpunct{.}%
}
\numberwithin{equation}{section}
\newcommand\numberthis{\addtocounter{equation}{1}\tag{\theequation}}
\newcounter{theorems}
\numberwithin{theorems}{section}
\theoremstyle{plain}
\newtheorem{theorem}[theorems]{Theorem}
\newtheorem{lemma}[theorems]{Lemma}
\theoremstyle{definition}
\newtheorem{assumption}{Assumption}
\theoremstyle{remark} 
\newtheorem{remark}[theorems]{Remark}
\newcommand{\R}{\mathds{R}}							
\newcommand{\N}{\mathds{N}}							
\newcommand{\PP}{\mathbb{P}}
\renewcommand{\P}[2]{\PP^{#1}(#2)}					
\newcommand{\E}{\pazocal{E}^\mathbb{P}}				
\renewcommand{\L}[2]{\pazocalbf{L}^{#1}(#2)}		
\newcommand{\Linf}[1]{\L{\infty}{#1}}				
\newcommand{\Ltwo}[1]{\pazocal{L}^2(#1)}		
\newcommand{\Ltwozero}[1]{\pazocal{L}^2_0(#1)}		
\renewcommand{\H}[3]{\pazocalbf{H}^{#1}_{#2}(#3)}	
\newcommand{\W}[2]{\pazocalbf{W}^{#1}(#2)}			
\newcommand{\dual}[3]{\langle #1, #2\rangle_{#3}}	
\newcommand{\lprod}[3]{(#1,#2)_{#3}}				
\newcommand{\llprod}[3]{\left(#1,#2\right)_{#3}}				
\newcommand{\nrm}[2]{\Vert#1\Vert_{#2}}				
\newcommand{\tripnrm}[1]{\mathopen{|\mkern-1.5mu|\mkern-1.5mu|}%
  #1\mathclose{|\mkern-1.5mu|\mkern-1.5mu|}}		
\newcommand{\tripnrmast}[2]{\tripnrm{#1}_{\ast,#2}}	
\newcommand{\tripnrmdt}[2]{\tripnrm{#1}_{\flat,#2}}
\renewcommand{\O}{\Omega}							
\newcommand{\Ot}{\widetilde{\O}}					
\newcommand{\G}{\Gamma}							
\newcommand{\Q}{\pazocal{Q}}						
\newcommand{\On}{\O^n}								
\newcommand{\Onh}{\On_h}							
\newcommand{\Ghn}{\G_h^n}							
\newcommand{\OO}{\pazocal{O}}						
\newcommand{\Od}[1]{\OO_{\delta}(#1)}				
\newcommand{\Odh}[1]{\mathchoice{\OO_{\delta_{h}}(#1)}{\OO_{\delta_{h}}(#1)}%
{\OO_{\delta_{\adjustbox{scale=0.6}{$\scriptstyle h$}}}(#1)}{}}
\newcommand{\OdT}[1]{\mathchoice{\OO^{#1}_{\delta_{h},\T}}{%
\OO^{#1}_{\delta_{h},\T}}
{\OO^{#1}_{\delta_{\adjustbox{scale=0.6}{$\scriptstyle h$}},\T}}{}}
\newcommand{\Scal}{\pazocal{S}}
\newcommand{\Spm}{\Scal^\pm}						
\newcommand{\Sp}{\Scal^{+}} 						
\newcommand{\T}{\mathcal{T}}
\newcommand{\Th}{\T_h}								
\newcommand{\Tht}{\widetilde{\T}_h}					
\newcommand{\Thn}{\T^n_{h,\delta_h}}          		
\newcommand{\ThGhn}{\T_{h,\mathchoice{\Ghn}{\Ghn}{\Ghn}{%
	\adjustbox{scale=0.7}{$\scriptstyle \G^n_h$}}}}
\newcommand{\ThGn}{\T_{h,\mathchoice{\G^n}{\G^n}{\G^n}{%
	\adjustbox{scale=0.7}{$\scriptstyle \G^n$}}}}
\newcommand{\TSpm}{\T^n_{h,\Spm}}					
\newcommand{\TSp}{\T^n_{h,\Sp}}						
\newcommand{\Fh}{\mathcal{F}_h}						
\newcommand{\Fhn}{\Fh^n}							
\newcommand{\Fhnd}{\mathcal{F}^n_{h,\delta_h}}		
\newcommand{\V}{\bm{V}}								
\newcommand{\Vnh}{\V^n_h}							
\newcommand{\Qnh}{Q^n_h}							
\renewcommand{\u}{\bm{u}}							
\let\vs\v
\renewcommand{\v}{\bm{v}}							
\newcommand{\vhl}{\v_h^\ell}						
\newcommand{\qhl}{q_h^\ell}							
\newcommand{\w}{\bm{w}}								
\newcommand{\n}{\bm{n}}								
\newcommand{\x}{\bm{x}}								
\newcommand{\f}{\bm{f}}								
\newcommand{\Eu}[1]{\mathbb{E}^{#1}}				
\newcommand{\Ep}[1]{\mathbb{D}^{#1}}				
\newcommand{\erru}[1]{\mathbf{e}_h^{#1}}			
\newcommand{\errp}[1]{\mathrm{d}_h^{#1}}			
\newcommand{\etab}{\bm{\eta}}
\newcommand{\jump}[1]{\llbracket#1\rrbracket}		
\newcommand{\Ex}{\pazocal{E}}						
\newcommand{\restr}[2]{{%
  \left.\kern-\nulldelimiterspace#1\right|_{#2}}}	
\newcommand{\Int}{\pazocal{I^\ast}}					
\DeclareMathOperator{\meas}{meas}					
\DeclareMathOperator{\Id}{Id}						
\DeclareMathOperator{\dist}{dist}					
\DeclareMathOperator{\trace}{tr}					
\DeclareMathOperator*{\esssup}{ess\,sup}			
\newcommand{\nf}[2]{\nicefrac{#1}{#2}}
\newcommand{\eps}{\varepsilon}						
\newcommand{\dt}{\Delta t}							
\newcommand{\st}{\;\vert\;}							
\newcommand{\leqc}{\lesssim}	 					
\definecolor{darkred}{RGB}{139,0,0}
\definecolor{mediumblue}{RGB}{0,0,205}
\definecolor{forestgreen}{RGB}{34,139,34}
\begin{document}

\title{\Large An unfitted Eulerian finite element method for the time-dependent
Stokes problem on moving domains}
\author{\scshape\normalsize Henry von Wahl\thanks{Corresponding author:
henry.vonwahl@ovgu.de} \textsuperscript{,}\thanks{Institut für Analysis und
Numerik, Otto-von-Guericke Univeristät, Universitätsplatz 2, {D-39106}
Magdeburg}
\and
\scshape\normalsize Thomas Richter\footnotemark[2]
\and
\scshape\normalsize Christoph Lehrenfeld\thanks{Institut für Numerische und
Angewandte Mathematik, Georg-August Universität Göttingen, Lotzestra\ss{}e
16-18, {D-37083} Göttingen}
}

\date{\small\today}
\maketitle

\begin{abstract}
We analyse a Eulerian Finite Element method, combining a Eulerian
time-stepping scheme applied to the time-dependent Stokes equations using the 
CutFEM approach with inf-sup stable Taylor-Hood elements for the spatial 
discretisation. This is based on the method introduced by Lehrenfeld \&
Olshanskii [ESAIM: M2AN 53(2):585--614] in the context of a scalar convection-
diffusion problems on moving domains, and extended to the non-stationary 
Stokes problem on moving domains by Burman, Frei \& Massing [arXiv:1910.03054
[math.NA]] using stabilised equal-order elements. The analysis includes the
geometrical error made by integrating over approximated levelset domains in the
discrete CutFEM setting. The method is implemented and the theoretical results
are illustrated using numerical examples.
\end{abstract}



\section{Introduction}\label{sec:introduction}

Flow problems on time-dependent domains are important in many
different applications in biology,  physics and engineering, such as
blood-flows~\cite{AmbrosiQuarteroniRozza2012} or fluid-structure
interaction problems~\cite{Richter2017}, e.g., freely moving solids
submersed in a fluid or generally in multi-phase flow applications,
e.g. coupling a liquid and gas. 

The most well established methods for these kind of problems studied in the
literature are either based on an at least partially Lagragian or a purely
Eulerian description of the domain boundary and its motion. Famous are
Arbitrary Lagragian-Eulerian (ALE) methods \cite{Donea1982,doneahuerta04}
which can be 
combined with rather standard time stepping schemes or space--time Galerkin
formulations \cite{behr01,klaij06,behr08,neumueller13}.  ALE methods rely on
geometry--fitted moving meshes or space--time meshes. The motion of
corresponding meshes and their necessary adaptations after significant
geometry deformations can be an severe burden for those methods, depending on
the amount of geometrical change.  To circumvent the problem of regular mesh
updates or space--time meshing, Eulerian methods can be considered. This is
also what we will do in this paper. Here, a static computational background
mesh is used to define potential unknowns and the geometry is incorporated
seperately resulting. In the context of finite element methods (FEM) these
\emph{unfitted} finite element methods have become popular within the last
decade and are known under different names, e.g. XFEM
\cite{fries2010extended}, CutFEM \cite{BCH+14}, Finite Cell Method
\cite{parvizianduesterrank07}, TraceFEM \cite{olshanskii2009finite}. Similar
concepts have also been used before, e.g. in penalty methods
\cite{babuska73b,barrettelliott86}, the fictitious domain method
\cite{glowinskietal94,glowinskietal94b}, the immersed boundary method
\cite{peskinmcqueen89}.  While these methods reached a considerable level of
maturity for stationary problems, problems with moving geometries -- one of
the main targets for unfitted finite element methods -- are not established
that much. This is due to the fact that standard time stepping approaches are
not straight-forwardly applicable in an Eulerian framework, where the
expression $\partial_t \u\approx \Delta t^{-1}(\u^n-\u^{n-1})$ is not well
defined if $\u^n$ and $\u^{n-1}$ live on different domains. 

One approach that has been proven to work despite this inconvenient setting is
the class of space--time Galerkin formulations in an Eulerian setting as they
have been considered in \cite{LR_SINUM_2013,L_SISC_2015,Z16,Pre18} for scalar
bulk problems, on moving surfaces in
\cite{GOReccomas,olshanskii2014eulerian,olshanskii2014error} or for
surface-bulk coupled problems in \cite{HLZ16}. Recently, also preliminary
steps towards unfitted space--time finite elements for two-phase flows have
been addressed in \cite{voulis18,2019arXiv191207426A}. These methods however
have the disadvantage that one has to deal -- in one way or another -- with a
higher dimensional problem. Using an adjusted quadrature rule to reduce
the space--time formulation to a classical time stepping scheme as 	
in~\cite{FreiRichter2017} calls for the costly computation of  
projections between discrete function spaces at times $t_{n-1}$ and $t_n$.

In the following we consider an alternative to space--time methods which
allows for a more standard time stepping structure and has been introduced in
\cite{LOX_ARXIV_2017,LO19} and considered for flow problems in \cite{BFM19}.
To discretise the time-derivative in the spatially smooth setting, the method
applies a standard method-of-lines approach using a backward--differentiation
formula in combination with a continuous extension operator in Sobolev spaces.
This extension ensures that the previous solution defined on the domain
$\O(t_{n-1})$ has meaning on the domain $\O(t_n)$.
In the fully discrete setting the approach uses an unfitted finite element
method on a fixed background mesh to discretise the domain and applies
additional stabilisation outside the physical domain, such that the discrete
solution has meaning on a larger domain $\OO(\O_h(t_{n-1})) \supset \O_h(t_n)$.
The major challenge with this approach, in the context of fluid problems, is
that the velocities at different time points, are weakly divergence-free with
respect to different pressure spaces. This means that the approximated
time-derivative $\dt^{-1}(\u_h^n-\u_h^{n-1})$ is not weakly divergence-free,
which causes stability problems for the pressure, as is known from the fitted
case \cite{BW11,Brenner_2013}.

The essential idea, to extend the discrete fluid velocity solution onto the
active mesh at the next time-step using ghost-penalties, in order to use a
method-of-lines approach for the discretisation of the time-derivative, has
also been considered in \cite[Section 3.6.3]{Sch17}. However, there the
extension of the velocity solution has been split into a separate sub-step and
has been limited to the extension to a vertex patch of previously active
elements, such that the time-step must obey a CFL-condition $\dt\leq c
h$. Furthermore, the analysis of this split approach currently remains open.

In the recent work \cite{BFM19}, the implicit extension technique introduced
in \cite{LO19} was also considered for the time-dependent Stokes problem on
moving domains. Here, the spatial discretisation consists of equal-order
pressure stabilised unfitted finite elements. However, the analysis in this
work was restricted to the situation, where the physical domain coincides the
the discrete levelset domain.

The remainder of this paper is structured as follows. In 
Section~\ref{sec:problem-description} we formally describe the smooth problem
we aim to solve numerically. We then begin with the temporal
semi-discretisation of the smooth problem in
Section~\ref{sec:time-discretisation}, where we also show the stability of the
approach in the spatially smooth setting.
Section~\ref{sec:fully-discrete-method} then covers the description of the
fully discrete problem, including the stabilisation operator which realises the
discrete extension in the method. The main part of this paper is then the fully
discrete analysis of the method in Section~\ref{sec:analysis}. Here wo go into
the details of how the geometrical consistency error, inherent in unfitted
finite element methods using levelsets, affect the coupling of the velocity and
pressure errors. We then present numerical examples for the method in
Section~\ref{sec:numerical-examples}. Here we show the dominance of the
geometrical error in practice, as well as discussing approaches to avoid
this issue. Finally, in Section~\ref{sec:conclusion-open-problems} we then
discuss the conclusions from this work, and discuss remaining open problems
connected to this method.


\section{Problem Description}\label{sec:problem-description}

Let us consider a time-dependent domain $\O(t)\subset\R^d$ with $d\in\{2,3\}$
and Lipschitz continuous boundary $\G(t)=\partial\O(t)$ over a fixed and
bounded time-interval $[0,T]$ and assume that this domain evolves smoothly
in time. We then define the space-time domain as  $\Q\coloneqq
\bigcup_{t\in(0,T)}\O(t)\times\{t\}$. In $\Q$ we the consider the
time-dependent Stokes problem: Find the velocity $\u$ and pressure $p$, such
that
\begin{subequations}\label{eqn:StrongProblem}
	\begin{align}
		\partial_t\u - \nu\Delta\u + \nabla p &= \f\\
		\nabla\cdot\u &= 0
	\end{align}
\end{subequations}
together with homogeneous Dirichlet boundary conditions on the space-boundary,
initial condition $\u(0) = \u_0$ and a forcing term $\f(t)$ with
the viscosity $\nu>0$.

For the well-posedness of this problem, we refer to \cite[Section~2.1]{BFM19}.


\section{Time discretisation}
\label{sec:time-discretisation}

For simplicity, let us consider a uniform time-step $\dt=T/N$ for some fixed
$N\in\N$. We then denote $t_n=n\dt$, $I_n=[t_{n-1},t_n)$, $\On=\O(t_n)$ and
$\G^n=\G(t_n)$. We define the $\delta$-neighbourhood of $\O(t)$ as
\begin{equation*}
	\Od{\O(t)}\coloneqq \{ \x\in\R^d\st \dist(\x,\O(t))\leq\delta \}.
\end{equation*}
For our method, we require that the domain $\On$ lies within the
$\delta$-neighbourhood of the previous domain, i.e.,
\begin{equation*}
	\On \subset \Od{\O^{n-1}},\quad\text{for }n=1,\dots,N.
\end{equation*}
We ensure that this requirement is fulfilled by the choice
\begin{equation*}
	\delta = c_\delta w^n_\infty\dt
\end{equation*}
where $w^n_\infty$ is the maximal normal speed of the domain interface and
$c_\delta>1$.

The time discretisation is then based on a \emph{method of lines} approach, in
combination with an extension operator for Sobolev functions to a
$\delta$-neighbourhood, which ensures that solutions defined on domains at
previous time-steps are well defined on $\On$.  See Section
\ref{sec:time-disc::subsubsec:extention-operator} for details of this
extension operator.

\subsection{Variational formulation}
\label{sec:time-disc::subsec:variational-formulation}
\subsubsection{Notation}
\label{sec:time-disc::subsubsec:notation}
We introduce some notation. By $\Ltwo{S}$ we denote the function space of
square integrable functions on a domain $S$ while $\pazocal{H}^{1}(S)$ is the
usual Sobolev space of functions in $\Ltwo{S}$ which have first order weak
derivatives in $\Ltwo{S}$. We define the subspace of $\Ltwo{S}$ of functions
with mean value zero $\Ltwozero{S} := \{ v \in \Ltwo{S} \mid \int_S v \dif\x =
0\}$ and the subspace of $\pazocal{H}^{1}(S)$ of functions with zero boundary
values (in the trace sense) as $\pazocal{H}^{1}_0(S)$. The dual space to
$(\pazocal{H}^{1}_0(S), \Vert \nabla \cdot \Vert_{S})$ is denoted by
$\pazocal{H}^{-1}(S)$. For vector-valued functions we write those spaces
bold. Further, we introduce the Poincar\'e constant $c_P > 0$ which ensures
that for all $v \in \pazocal{H}^{1}_0(\Omega(t))$ and all $t \in (0,T)$ there
holds $\Vert v \Vert_{\Omega(t)} \leq c_P \Vert \nabla v \Vert_{\Omega(t)}$.

\subsubsection{Semi-discretization}
\label{sec:time-disc::subsubsec:semi-discretization}
We discretise the time derivative with the implicit Euler (or BDF1) method in
combination with the extension operator.
Multiplying \eqref{eqn:StrongProblem} with a test function, integrating over
$\On$ and using integration by parts to obtain the weak formulations for the
diffusion and velocity-pressure coupling terms, the variational formulation of
the temporally semi-discrete problem then reads: For $n=1,\dots,N$, given
$\u^{n-1} \in \H{1}{0}{\O^{n-1}}$ and $\f^n \in \H{-1}{}{\Omega^n}$, find
$(\u^n,p^n)\in\V^n\times Q^n := \H{1}{}{\On}\times\Ltwozero{\On}$ such that
for all $(\v,q)\in\V^n\times Q^n$ it holds
\begin{equation}\label{eqn:VariationalFormulationSpaceCont}
	\frac{1}{\dt}\lprod{\u^n}{\v}{\On} + a^n(\u^n,\v) + b^n(p^n,\v) +
b^n(q,\u^n) = \dual{\f^n}{\v}{(\V^{n})',\V^n} +
\frac{1}{\dt}\lprod{\Ex\u^{n-1}}{\v}{\On}.
\end{equation}
Here, $(\cdot,\cdot)_{\On}$ denotes the $\pazocalbf{L}^2$-inner product and the
bilinear forms are
\begin{equation*}
	a^n(\u,\v) = \nu\int_{\On}\nabla\u:\nabla\v\dif\x\quad\text{and}\quad
b^n(q,\v) = -\int_{\On}q\nabla\cdot\v\dif\x
\end{equation*}
for the diffusion term and the velocity-pressure coupling respectively.
$\Ex:\H{1}{}{\O^{n-1}}\rightarrow\H{1}{ }{\Od{\O^{n-1}}}$ is the extension
operator -- discussed in the next section -- that allows us to make sense of
the ``initial value'' $\u^{n-1} \in \H{1}{0}{\O^{n-1}}$ in $\On \subset
\OO_\delta(\O^{n-1})$.

\subsubsection{Extension operator}
\label{sec:time-disc::subsubsec:extention-operator}

For the extension operator, we require the following family of space-time
anisotropic spaces
\begin{equation*}
	\def\arraystretch{1.2}
	\L{\infty}{0,T;\H{k}{}{\O(t)}}\coloneqq \left\{\v\in\L{2}{\Q}\;\middle\vert\;
		\begin{array}{l}
			\v(\cdot,t)\in\H{k}{}{\O(t)}\text{ for a.e. }t\in(0,T)\\
			\qquad\text{ and } \esssup_{t\in(0,T)}
				\nrm{\v(\cdot,t)}{\H{k}{}{\O(t)}}<\infty
		\end{array} \right\}
\end{equation*}
for $k=0,\dots,m+1$. We then denote $\partial_t\v=\v_t$ as the weak partial
derivative with respect to the time variable, if this exists as an element in
the space-time space $\L{2}{\Q}$. We also use the standard notation for
$\pazocal{L}^2$-norms of $\nrm{\cdot}{S}=\nrm{\cdot}{\pazocal{L}^2(S)}$ for
some domain $S$.

We now assume the existence of a spatial extension operator
\begin{equation*}
	\Ex:\L{2}{\O(t)} \rightarrow \L{2}{\OO(\O(t))}
\end{equation*}
which fulfils the following properties
\begin{assumption}\label{assumption:ExtensionProperties}	
Let $\v\in\L{\infty}{0,T;\H{m+1}{}{\O(t)}}\cap\W{2,\infty}{\Q}$. There exist
positive constants $c_{\ref{assumption:ExtensionProperties}a}$,
$c_{\ref{assumption:ExtensionProperties}b}$ and
$c_{\ref{assumption:ExtensionProperties}c}$ that are uniform in $t$, such that
\begin{subequations}
	\begin{align}
		\nrm{\Ex\v}{\H{k}{}{\Od{\O(t)}}} &\leq
			c_{\ref{assumption:ExtensionProperties}a}\nrm{\v}{\H{k}{}{\O(t)}}
			\label{eqn:ExtentionExtimate}\\
		\nrm{\nabla(\Ex\v)}{\Od{\O(t)}} &\leq
			c_{\ref{assumption:ExtensionProperties}b}\nrm{\nabla\v}{\O(t)}
			\label{eqn:GradExtentionExtimate}\\
		\nrm{\Ex\v}{\W{2,\infty}{\Od{\Q}}} &\leq
			c_{\ref{assumption:ExtensionProperties}c}\nrm{\v}{\W{2,\infty}{\Q}}
			\label{eqn:W2ExtentionExtimate}
	\end{align}
\end{subequations}
holds. Furthermore, if for $\v\in\L{\infty}{0,T;\H{m+1}{}{\O(t)}}$ it holds
for the weak partial time-derivative that
$\v_t\in\L{\infty}{0,T;\H{m}{}{\O(t)}}$, then
\begin{equation}\label{eqn:TimeDerivExtentionExtimate}
	\nrm{(\Ex\v)_t}{\H{m}{}{\Od{\O(t)}}} \leq
	c_{\ref{assumption:ExtensionProperties}d}\left[\nrm{\v}{\H{m+1}{}{\O(t)}} +
	\nrm{\v_t}{\H{m}{}{\O(t)}}\right],
\end{equation}
where the constant $c_{\ref{assumption:ExtensionProperties}d}>0$ again only
depends on the motion of the spatial domain. 
\end{assumption}

\begin{remark}
Such an extension operator can be constructed explicitly from the classical
linear and continuous \emph{universal} extension operator for Sobolev spaces
(see
e.g.~\cite[Section VI.3]{Ste70}) when the motion of the domain can be described
by a diffeomorphism $\Psi(t)\colon\O_0\rightarrow\O(t)$ for each $t\in[0,T]$
from the reference domain $\O_0$ that is smooth in time. See \cite{LO19} for
details thereof. 
\end{remark}

Assuming sufficient regularity of the domain $\On$, well-posedness of
\eqref{eqn:VariationalFormulationSpaceCont} is given for every time-step by the
standard theory of the Stokes-Brinkman problem. 

\subsection{Stability}
\label{sec:time-disc::subsec:stability}

We now show that the semi-discrete scheme
\eqref{eqn:VariationalFormulationSpaceCont} gives a stable solution for both
the velocity and pressure.

\begin{lemma}\label{lemma:StabilitySpaceContVel}
Let $\{\u^n\}_{n=1}^N$ be the velocity solution of
\eqref{eqn:VariationalFormulationSpaceCont} with initial data $\u^0\in\H{1}{
}{\O^0}$. Then it holds that
\begin{equation*}
	\nrm{\u^k}{\O{k}}^2 + \dt\sum_{n=1}^k\frac{\nu}{2}\nrm{\nabla\u^n}{\On}^2 
	\leq \exp(c_{L\ref{lemma:StabilitySpaceContVel}}t_k)\left[
	\nrm{\u^0}{\O^0}^2 +\frac{\nu\dt}{2}\nrm{\nabla\u^0}{\O^0}^2
	+\frac{c_P^2 \dt }{\nu}\sum_{n=1}^k\nrm{\f^n}{\On}^2\right]
\end{equation*}
with a constant $c_{L\ref{lemma:StabilitySpaceContVel}}$ independent of the
time step $\dt$ and the number of steps $k$.
\end{lemma}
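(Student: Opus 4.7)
My strategy is a discrete energy estimate followed by a discrete Gr\"onwall inequality. The first step is to test \eqref{eqn:VariationalFormulationSpaceCont} with $(\v,q) = (\u^n, p^n)$. The two pressure-coupling terms sum to $2\,b^n(p^n,\u^n)$, which vanishes by the weak divergence constraint: testing \eqref{eqn:VariationalFormulationSpaceCont} separately with $(\v,q)=(\bm{0},p^n)$ yields $b^n(p^n,\u^n) = 0$. This leaves the energy identity
\begin{equation*}
	\frac{1}{\dt}\nrm{\u^n}{\On}^2 + \nu\nrm{\nabla\u^n}{\On}^2 = \dual{\f^n}{\u^n}{(\V^n)',\V^n} + \frac{1}{\dt}\lprod{\Ex\u^{n-1}}{\u^n}{\On}.
\end{equation*}

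Next I bound the right-hand side. For the forcing term I combine Cauchy--Schwarz, the Poincar\'e inequality (using that $\u^n$ satisfies homogeneous Dirichlet data so that $\nrm{\u^n}{\On}\leq c_P\nrm{\nabla\u^n}{\On}$) and Young's inequality to get $\tfrac{c_P^2}{2\nu}\nrm{\f^n}{\On}^2 + \tfrac{\nu}{2}\nrm{\nabla\u^n}{\On}^2$. For the extension-coupling term I apply Young's inequality directly. Absorbing the viscous and $\tfrac{1}{2\dt}\nrm{\u^n}{\On}^2$ contributions into the left-hand side and multiplying by $2\dt$ then produces
\begin{equation*}
	\nrm{\u^n}{\On}^2 + \nu\dt\,\nrm{\nabla\u^n}{\On}^2 \;\leq\; \nrm{\Ex\u^{n-1}}{\On}^2 + \frac{c_P^2\,\dt}{\nu}\nrm{\f^n}{\On}^2.
\end{equation*}

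The essential task is now to bound $\nrm{\Ex\u^{n-1}}{\On}^2$ sharply. Because $\On\subset\Od{\O^{n-1}}$ with $\delta=O(\dt)$ and $\Ex\u^{n-1}$ coincides with $\u^{n-1}$ on $\O^{n-1}$, only the thin strip $\On\setminus\O^{n-1}$ produces an excess beyond $\nrm{\u^{n-1}}{\O^{n-1}}^2$. A trace-type argument on this strip, combined with the continuity properties \eqref{eqn:ExtentionExtimate}--\eqref{eqn:GradExtentionExtimate}, should deliver a refined bound of the form
\begin{equation*}
	\nrm{\Ex\u^{n-1}}{\On}^2 \;\leq\; (1 + c_e\dt)\nrm{\u^{n-1}}{\O^{n-1}}^2 + c_e\dt\,\nrm{\nabla\u^{n-1}}{\O^{n-1}}^2.
\end{equation*}
I expect establishing this refinement to be the main obstacle: using \eqref{eqn:ExtentionExtimate} globally as $\nrm{\Ex\u^{n-1}}{\On}^2\leq c_a^2\nrm{\u^{n-1}}{\O^{n-1}}^2$ would contribute a multiplicative factor $c_a^{2k}$ after $k$ steps and destroy the Gr\"onwall conclusion.

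Finally, introducing the energy $E^n := \nrm{\u^n}{\On}^2 + \tfrac{\nu\dt}{2}\nrm{\nabla\u^n}{\On}^2$ and absorbing the new $c_e\dt\,\nrm{\nabla\u^{n-1}}{\O^{n-1}}^2$ term into the gradient part of $E^{n-1}$ at the price of a constant $c$ depending on $\nu$ and $c_e$, one obtains a recursion
\begin{equation*}
	E^n + \tfrac{\nu\dt}{2}\nrm{\nabla\u^n}{\On}^2 \;\leq\; (1+c\dt)\,E^{n-1} + \tfrac{c_P^2\dt}{\nu}\nrm{\f^n}{\On}^2.
\end{equation*}
Dividing by $(1+c\dt)^n$, summing telescopically over $n=1,\dots,k$, and multiplying through by $(1+c\dt)^k\leq\exp(c t_k)$ then yields exactly the claimed inequality with $c_{L\ref{lemma:StabilitySpaceContVel}} = c$.
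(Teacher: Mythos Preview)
Your approach is essentially the one the paper has in mind: test with the velocity to kill the pressure, bound the forcing term via Poincar\'e and Young, control $\nrm{\Ex\u^{n-1}}{\On}^2$ by a strip estimate, and finish with a discrete Gr\"onwall. The only cosmetic difference is that the paper tests with $2\dt(\u^n,-p^n)$, so that $b^n(p^n,\u^n)-b^n(p^n,\u^n)$ cancels directly; you test with $(\u^n,p^n)$ and invoke the divergence constraint separately. These are equivalent.

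There is, however, one point in your plan that does not go through as written. You state the strip bound with a \emph{single} constant $c_e$ on both the $L^2$ and gradient terms, and then claim to absorb $c_e\dt\,\nrm{\nabla\u^{n-1}}{\O^{n-1}}^2$ into the gradient part $\tfrac{\nu\dt}{2}\nrm{\nabla\u^{n-1}}{\O^{n-1}}^2$ of $E^{n-1}$ ``at the price of a constant $c$''. This fails whenever $c_e>\nu/2$: the factor in front of $E^{n-1}$ becomes $\max(1+c_e\dt,\,2c_e/\nu)$ rather than $(1+c\dt)$, and iterating produces $(2c_e/\nu)^k$ instead of an exponential in $t_k$. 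The fix is that the trace-type strip estimate naturally comes with a \emph{free parameter} (from Young's inequality applied inside the strip), i.e.\ it has the form
\[
	\nrm{\Ex\u^{n-1}}{\On}^2 \leq \bigl(1+c_1(\varepsilon)\dt\bigr)\nrm{\u^{n-1}}{\O^{n-1}}^2 + c_2(\varepsilon)\,\dt\,\nrm{\nabla\u^{n-1}}{\O^{n-1}}^2,
\]
with $c_2(\varepsilon)\to 0$ as $\varepsilon\to 0$ at the expense of $c_1(\varepsilon)\to\infty$; see the fully discrete analogue \eqref{eqn:estimate-on-old-domain}. Choosing $\varepsilon$ so that $c_2(\varepsilon)=\nu/2$ makes the gradient contribution match exactly, and then your recursion $E^n+\tfrac{\nu\dt}{2}\nrm{\nabla\u^n}{\On}^2\leq(1+c\dt)E^{n-1}+\tfrac{c_P^2\dt}{\nu}\nrm{\f^n}{\On}^2$ holds with $c=c_1(\varepsilon)$, after which your telescoping Gr\"onwall argument is correct.
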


\begin{proof}
The proof is analog to that of \cite[Lemma~3.6]{LO19} with the different choice
of test-function $2\dt(\u^n,-p^n) \in \V^n \times Q^n$ to remove the pressure
from the equation.

\end{proof}

\begin{lemma}\label{lemma:StabilitySpaceContPre}
Let $\{p^n\}_{n=1}^N$ be the pressure solution of
\eqref{eqn:VariationalFormulationSpaceCont}. Then it holds that
\begin{equation}\label{eqn:SharperPressureEst}
	\nrm{p^n}{\On} \leq \frac{1}{\beta} \left( c_P\nrm{\f^n}{\On} +
	\frac{1}{\dt}\nrm{\u^n-\Ex\u^{n-1}}{\H{-1}{}{\On}} +
        \nu\nrm{\nabla\u^n}{\On} \right).
\end{equation}
with a constant $\beta>0$ bounded independent of $\dt$.
\end{lemma}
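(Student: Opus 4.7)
The plan is the standard inf-sup/duality argument for pressure estimates in Stokes-type problems, adapted to the present setting where the right-hand side of the momentum equation includes the discrete time-derivative term. Since $p^n \in \Ltwozero{\On}$ and $\On$ is assumed Lipschitz, the continuous LBB inf-sup condition provides a constant $\beta > 0$, depending only on $\On$ (and hence bounded uniformly in $\dt$ but possibly varying with $n$), such that
\begin{equation*}
    \beta \, \nrm{p^n}{\On} \;\leq\; \sup_{\v \in \H{1}{0}{\On} \setminus \{0\}} \frac{-b^n(p^n,\v)}{\nrm{\nabla \v}{\On}}.
\end{equation*}
The first step is therefore to rewrite $b^n(p^n,\v)$ using the momentum equation in \eqref{eqn:VariationalFormulationSpaceCont}.

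Testing \eqref{eqn:VariationalFormulationSpaceCont} with $(\v,0)$ for any $\v \in \H{1}{0}{\On}$ gives
\begin{equation*}
    -b^n(p^n,\v) \;=\; -\dual{\f^n}{\v}{(\V^n)',\V^n} + \frac{1}{\dt}\lprod{\u^n - \Ex\u^{n-1}}{\v}{\On} + a^n(\u^n,\v).
\end{equation*}
The second step is then to estimate the three terms on the right-hand side individually against $\nrm{\nabla\v}{\On}$. For the forcing term one uses duality together with the Poincar\'e inequality to get $c_P\nrm{\f^n}{\On}\nrm{\nabla\v}{\On}$. For the discrete time-derivative term one uses the definition of the $\Hdual{\On}$-norm (with $\nrm{\nabla\v}{\On}$ as the $\H{1}{0}{\On}$-norm) to bound it by $\dt^{-1}\nrm{\u^n - \Ex\u^{n-1}}{\Hdual{\On}}\nrm{\nabla\v}{\On}$. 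The diffusion term is controlled by Cauchy-Schwarz as $\nu\nrm{\nabla\u^n}{\On}\nrm{\nabla\v}{\On}$.

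Dividing by $\nrm{\nabla\v}{\On}$, taking the supremum and invoking the inf-sup condition yields the claimed bound. The only non-routine point is the uniformity of $\beta$ with respect to $\dt$: since $\beta$ is the inf-sup constant of the continuous Stokes problem on $\On$ (a purely geometric/spatial quantity), it is independent of the time step, which is exactly what the statement asserts. One should note, however, that $\beta$ in principle depends on $n$ through $\On$; under the standing smoothness assumption on the evolution of the domain it may be taken uniform in $n$, but this is not the main obstacle here and can be folded into constants along the lines of Lemma~\ref{lemma:StabilitySpaceContVel}.
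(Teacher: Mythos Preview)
Your argument is correct and essentially identical to the paper's own proof: the paper also invokes the continuous inf-sup condition (taking $\beta = \min_n \beta^n$ to make the constant uniform in $n$), rewrites $b^n(p^n,\v)$ via the momentum equation, and then bounds the three resulting terms using Poincar\'e, the definition of the $\Hdual{\On}$-norm, and Cauchy--Schwarz on $a^n$. Your explicit remark on the $n$-dependence of $\beta$ matches the paper's handling of this point.
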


\begin{proof}
Let $\beta^n$ be the inf-sup constant of the space pair $\V^n\times Q^n=
\H{1}{0}{\On}\times\Ltwozero{\On}$ and denote $\beta =
\min_{n=0,\dots,N}\beta^n$. From the inf-sup stability of the velocity and
pressure spaces and by rewriting the momentum balance equation, it follows
that
\begin{equation*}
	\beta\nrm{p^n}{\On} \leq \beta^n\nrm{p^n}{\On} \leq \sup_{\v\in\V^n}
	\frac{b^n(p^n,\v)}{\nrm{\nabla\v}{\On}} =
	\sup_{\v\in\V^n}\frac{\lprod{\f^n}{\v}{\On} -
	\frac{1}{\dt}\lprod{\u^n-\Ex\u^{n-1}}{\v}{\On} - a^n(\u^n,\v)
	}{\nrm{\nabla\v}{\On}}.
\end{equation*}
Taking absolute values, applying Cauchy-Schwarz, using the Poincaré inequality
and the continuity of the diffusion bilinear form we get the claim.
\end{proof}

\begin{remark}\label{rem:nodt}
At first glance it seems unclear if the estimate in
\eqref{eqn:SharperPressureEst} yields a pressure bound that is independent of
$\dt$. Let us explain why a scaling of $\Vert p^n \Vert_{\Omega^n}$ with
$\dt^{-1}$ is not to be expected. The argument is based on an a relation for
the discretisation error.
The exact solution $(\u(t_n),p(t_n))$ fulfills for all $\v \in \V^n$ and all $q
\in Q^n$ an equation similar to \eqref{eqn:VariationalFormulationSpaceCont}:
\begin{equation*} 
	\llprod{\partial_t \u(t_n)}{\v}{\On} + a^n(\u(t_n),\v) + b^n(p(t_n),\v) +
	b^n(q,\u(t_n)) = \dual{\f^n}{\v}{(\V^{n})',\V^n}.
      \end{equation*}
We find for $\Eu{k} := \u(t_k) - \u^k$, and $\Ep{k} := p(t_k) - p^k$,
$k=1,..,N$ that there holds
\begin{align*}
& \llprod{ \frac{\Eu{n} - \Ex  \Eu{n-1}}{\dt}}{\v}{\O^n} + a^n(\Eu{n},\v) +
b^n(\Ep{n},\v) + b^n(q,\Eu{n})  = \llprod{
\frac{\u(t_{n}) - \Ex  \u(t_{n-1})}{\dt} -\partial_t \u(t_{n})}{\v}{\O^n}
\end{align*}
for all $\v \in \V^n$ and $q \in Q^n$. 
Now, assuming sufficient regularity, i.e. $u \in \W{2,\infty}{\Q}$,
we easily obtain the bound
$c_{R\ref{rem:nodt}a} \dt \nrm{\u}{\W{2,\infty}{\Q}} \nrm{\v}{\O^n}$
for the r.h.s. with a constant $c_{R\ref{rem:nodt}a}$ independent of $n$, $\u$
and $\dt$. Here, we also
made use of \eqref{eqn:W2ExtentionExtimate}.
As the l.h.s. is the same as in \eqref{eqn:VariationalFormulationSpaceCont} we
can apply Lemma \ref{lemma:StabilitySpaceContPre} (using $\Eu{0}=0$) to obtain
the bound
\begin{equation*}
\nrm{\Eu{n}}{\O^n} \leq c_{R\ref{rem:nodt}b} \exp(c_{R\ref{rem:nodt}c}t_n) \dt
\nrm{\u}{\W{2,\infty}{\Q}}
\end{equation*}
Hence, a simple triangle inequality yields
\begin{align*}
	\frac{1}{\dt}\nrm{\u^n-\Ex\u^{n-1}}{\H{-1}{}{\On}} 
		&\leq\! \frac{1}{\dt}\nrm{\u(t_n)-\Ex\u(t_{n-1})}{\H{-1}{}{\On}}
\!+\! \frac{1}{\dt} \nrm{\Eu{n}}{\H{-1}{}{\On}} + \frac{1}{\dt} \nrm{ \Ex
\Eu{n-1}}{\H{-1}{}{\On}} \\
& \leq \nrm{\partial_t \u(t_n)}{\H{-1}{}{\On}} + c_{R\ref{rem:nodt}d} \dt
\Vert \u \Vert_{\W{2,\infty}{\Q}}.
\end{align*}
and hence a bound on the norm of $p^n$ that is independent on $\dt^{-1}$.
\end{remark}


\section{The fully discrete method}
\label{sec:fully-discrete-method}

For the spatial discretisation of the method we use the CutFEM approach
\cite{BCH+14}. Within the CutFEM framework, we consider a background domain
$\widetilde\O\subset\R^d$ and assume that $\O(t)\subset\widetilde\O$ for all
$t\in[0,T]$. We then take a simplicial, shape-regular and quasi-uniform mesh
$\Tht$ of $\Ot$, where $h>0$ is the characteristic size of the simplices.
Bad cuts of the mesh with the domain boundary $\G(t)$ are stabilised using
ghost-penalty stabilisation \cite{Bur10}. We will discuss the details of this
in Section~\ref{sec:fully-disc-meth::subsubsec:ghost-penalties}. The fully
discrete method realises the necessary extension implicitly, by applying
the ghost-penalty stabilisation operation on a larger  extension region when
solving for the solution in each time step. At each time step, we therefore
extend the (discrete) physical domain $\Onh$ by a strip of width
\begin{equation*}
	\delta_h=c_{\delta_h} w^n_\infty\dt
\end{equation*} 
such that $\O^{n+1}_h$ is a subset of this extended domain with
$c_{\delta_h}>1$, but sufficiently small so that $\OO_{\delta_h}(\Onh) \subset
\OO_{\delta}(\On)$.  We collect all elements which have some part in this
extended domain at time $t=t_n$ in the \emph{active velocity mesh}, denoted as
\begin{equation*}
	\T^n_{h,\delta_h} \coloneqq \{ K\in\Tht\st \exists\x\in K 
	\text{ such that }\dist(\x,\Onh)\leq\delta_h \} \subset \Tht
\end{equation*}
and denote the \emph{active domain} as
\begin{equation*}
	\OdT{n} \coloneqq \{ \x\in K \st K\in\T_{h,\delta_h} \} \subset\R^d .
\end{equation*}
We further define the \emph{cut mesh} as $\Th^n = \T^n_{h,0}$ of all elements
which contain some part of the physical domain and the \emph{cut domain} as
$\OO^n_{\T}=\OO^n_{0,\T}$.

On the active mesh, we consider an inf-sup stable finite element pair
$\V_h\times Q_h$ for the Nitsche-CutFEM discretisation of the Stokes problem
\cite{BH14,MLLR14}. For an overview of such elements see \cite{GO17}. We shall
use the family of Taylor-Hood finite elements for $k\geq 2$
\begin{equation*}
	\Vnh \coloneqq \{\v_h\in \bm{C}(\OdT{n} ) \st
	\restr{\v_h}{K}\in[\P{k}{K}]^d\text{ for all } K\in\Thn \}
\end{equation*}
and
\begin{equation*}
	\Qnh \coloneqq \{ q_h\in C(\OO^n_{\T} ) \st \restr{\v_h}{K}\in\P{k-1}{K}
	\text{ for all } K\in\T_h^n \}.
\end{equation*}


\subsection{The variational formulation}
\label{sec:fully-disc-meth::subsec:variational-formulation}

The fully discrete variational formulation of the method reads as follows:
Given an appropriate initial condition $\u^0_h\in\V_h^0$, for $n=1,\dots,N$
find $(\u_h^n,p_h^n)\in\Vnh\times\Qnh$, such that
\begin{equation}\label{eqn:VariationalFormulationDiscrete}
	\int_{\O^n_h}\frac{\u_h^n-\u_h^{n-1}}{\Delta t}\cdot \v_h\dif\x +
	a^n_h(\u_h^n,\v_h) + b_h^n(p_h^n,\v_h) + b_h^n(q_h,\u_h^n) +
	s_h^n((\u_h^n,p_h^n),(\v_h,q_h)) = \f^n_h(\v_h)
\end{equation}
for all $(\v_h,q_h)\in\V_h^n\times Q_h^n$. We impose the homogeneous Dirichlet
boundary conditions in a weak sense using the symmetric Nitsches method
\cite{Nit71}. For the diffusion term, the bilinear form is then
\begin{align*}
	a^n_h(\u_h,\v_h) &\coloneqq \nu\int_{\Onh}\nabla\u:\nabla\v\dif\x + \nu
		N^n_h(\u_h,\v_h),\\
	N^n_h(\u_h,\v_h) &\coloneqq N^n_{h,c}(\u_h,\v_h)+N^n_{h,c}(\v,\u) +
		N^n_{h,s}(\u_h,\v_h)
\end{align*}
where
\begin{equation*}
	N^n_{h,c}(\u_h,\v_h)\coloneqq\int_{\G^n_h}(-\partial_{\n} \u_h)\cdot\v_h\dif
	s\quad\text{and}\quad N^n_{h,s}(\u_h,\v_h)\coloneqq\int_{\G^n_h}
	\frac{\sigma}{h}\u_h\cdot\v_h\dif s
\end{equation*} 
are the consistency (symmetry) and penalty terms of Nitsche's method while
$\sigma>0$ is the penalty parameter. The velocity-pressure coupling term is
given by
\begin{equation*}
	b^n_h(\v,q) = -\int_{\Onh} q\nabla\cdot\v\dif\x 
		+ \int_{\G^n_h} p\v\cdot\n\dif s.
\end{equation*}

In order to realise the discrete extension of the velocity and to stabilise
the method with respect to essentially arbitrary mesh-interface cut positions,
we use ghost-penalty stabilisation. This term is given by
\begin{equation*}
	s_h^n((\u_h^n,p_h^n),(\v_h,q_h)) = \gamma_{s,u}\nu i^n_h(\u_h^n,\v_h)
	 +  \gamma_{s,u}'\frac{1}{\nu} i^n_h(\u_h^n,\v_h)
	 -\gamma_{s,p}\frac{1}{\nu}j_h^n(p_h^n,q_h)
\end{equation*}
with stabilisation parameters $\gamma_{s,u},\gamma_{s,u}', \gamma_{s,p} >
0$. A suitable choice for these parameters will be discussed later, cf. Remark
\ref{rem:choiceofgp} below. The velocity ghost-penalty operator
$i_h^n(\cdot,\cdot)$ stabilises the velocity w.r.t. arbitrary bad cut
configurations and implicitly defines an extension of the velocity field. It
will therefore act on a strip of elements both inside and outside the physical
domain, in order for us to have control over the velocity on the entire active
domain $\Thn$.  The pressure ghost-penalty operator stabilises the pressure in
the $\pazocal{L}^2$-norm and is needed to give an inf-sup property for
unfitted finite elements \cite{GO17}. This operator will therefore only act in
the direct vicinity of the domain boundary and only on elements which have at
least some part in the physical domain.


\subsubsection{The ghost-penalty operator}
\label{sec:fully-disc-meth::subsubsec:ghost-penalties}

The stabilisation bilinear form $s_h(\cdot,\cdot)$ has two purposes here.
First, it stabilises the discrete problem
\eqref{eqn:VariationalFormulationDiscrete} with respect to domain
boundary-mesh cut position and it implicitly provides the extension of the
velocity field that is needed to allow the method of lines approach. To the
best of our knowledge, there are currently three different versions of the
ghost-penalty stabilisation operator. An \emph{LPS-type} version was the first
ghost-penalty operator, proposed in \cite{Bur10}. The \emph{normal derivative
jump} version is probably the most widely used variant, see e.g.,
\cite{BCH+14,BH12,MLLR14,SW14,GM19}. We will use the \emph{direct version} of
the ghost penalty operator introduced in \cite{Pre18}. For details on all
three ghost penalty operators, see \cite{LO19}. We will only provide details
on the direct version used here. However, the other versions of the
ghost-penalty operator could also be used instead.

For the velocity ghost-penalty operator, we define the set of elements in the
boundary strip
\begin{equation*}
	\TSpm \coloneqq \{ K\in\Tht\st \exists\x\in K\text{ with}
	\dist(\x,\G^n_h)\leq\delta_h\}
\end{equation*}
and the set of interior facets in this strip
\begin{equation*}
	\Fhnd \coloneqq \{ F=\overline{T}_1\cap \overline{T}_2\st T_1\in\Thn
	,T_2\in\TSpm ,T_1\neq T_2\text{ and } \meas_{d-1}(F)>0 \}.
\end{equation*}
Furthermore, for the pressure ghost-penalty operator we define the set of
\emph{boundary elements}
\begin{equation*}
	\ThGhn \coloneqq \{K\in\Th^n\st\exists\x\in K \text{ with }\x\in\Ghn \},
\end{equation*}
and the set of interior facets of these elements
\begin{equation*}
	\Fhn \coloneqq \{ F=\overline{T}_1\cap \overline{T}_2\st T_1\in\T_h^n
	,T_2\in\ThGhn ,T_1\neq T_2\text{ and } \meas_{d-1}(F)>0 \}.
\end{equation*}
A sketch of the different sets of element and facets for both the velocity and
pressure can be seen in Figure~\ref{fig:mesh-sketch}.

\begin{figure}
	\centering
	\def\myfilename{velocity-mesh-elements}	
	\subfloat[][Active velocity elements]{
	\includegraphics[scale=4, valign=t]{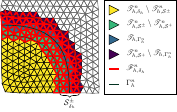}
	}
	\hskip.02\textwidth
	\subfloat[][Active pressure elements]{
	\vphantom{\includegraphics[scale=4, valign=t]{\myfilename}}
	\def\myfilename{pressure-mesh-elements}	
	\includegraphics[scale=4, valign=t]{\myfilename}
	\undef\myfilename
	}
	\caption{The different stability and extension elements for both the velocity
		and pressure. Note that the interior (yellow) elements which have a red
		facet are also used by the direct ghost penalty operator.}
	\label{fig:mesh-sketch}
\end{figure}

To define the stabilisation operator, we require some further notation. For a
facet in the extention strip $\Fhnd\ni F =\overline{T}_1\cap \overline{T}_2$
let $\omega_F=T_1\cup T_2$ be the corresponding \emph{facet patch}. We then
consider $\jump{u}\coloneqq u_1-u_2$ with $u_i=\restr{\E u}{T_i}$, where
$\E\colon \P{m}{K}\rightarrow\P{m}{\R^d}$ is the canonical extension of a
polynomial to $\R^d$.

The velocity ghost-penalty operator is then defined as
\begin{equation*}
	i^n_h(\u_h^n,\v_h) = \sum_{F\in\Fhnd}\frac{1}{h^2}
	\int_{\omega_F}\jump{\u_h}\cdot\jump{\v_h}\dif\x
\end{equation*}
and the pressure ghost-penalty operator is defined as
\begin{equation*}
	j_h^n((p_h^n,q_h)) = 
	\sum_{F\in\Fhn}\int_{\omega_F}\jump{p}\cdot\jump{q}\dif\x.
\end{equation*}

For the analysis, we will also need to insert general $\pazocal{L}^2$-functions
as arguments of the ghost-penalty operators. In this case we take
$u_i=\restr{\E\Pi_{T_i}u}{T_i}$, where $\Pi_{T_i}$ is the
$\pazocal{L}^2(T_i)$-projection onto $\P{m}{T_i}$.

\begin{lemma}[Consistency]\label{lemma:GP-consitency}
Let $\w\in\H{m+1}{ }{\OdT{n}}$ and $r\in\pazocal{H}^{m}(\OO^n_{\T})$,
$n=1,\dots,N$, $m \geq 1$. Then it holds that
\begin{align*}
	i_h^n(\w,\w) &\leq c h^{2m}\nrm{\w}{\H{m+1}{ }{\OdT{n}}}^2\\
	j_h^n(r,r) &\leq c h^{2m}\nrm{r}{\pazocal{H}^{m}(\OO^n_{\T})}^2.
\end{align*}
Furthermore, let $\Int$ be the Lagrangian interpolation operator for the
velocity ($\Int: [C^0(\OdT{n})]^d \to \Vnh$) and for the pressure space
($\Int: [C^0(\OdT{n})]^d \to \Qnh$). Then we also
have
\begin{align*}
	i_h^n(\w-\Int\w,\w-\Int\w) &\leq
		ch^{2m}\nrm{\w}{\H{m+1}{}{\OdT{n}}}^2\\
	j_h^n(r-\Int r,r-\Int r) &\leq 
		ch^{2m}\nrm{r}{\pazocal{H}^{m}(\OdT{n})}^2.
\end{align*}
\end{lemma}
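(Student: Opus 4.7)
The plan is to reduce the global estimate to a patch-wise estimate on each facet patch $\omega_F = T_1 \cup T_2$ and to exploit the key consistency property of the direct ghost-penalty: if $\pi \in \P{m}{\R^d}$, then $\Pi_{T_i}\pi = \pi|_{T_i}$ for $i=1,2$, and the canonical extensions $\E\Pi_{T_1}\pi$ and $\E\Pi_{T_2}\pi$ coincide as polynomials on $\R^d$, so $\jump{\pi} \equiv 0$ on $\omega_F$. By linearity in the argument of $\jump{\cdot}$, this lets me subtract any such $\pi$ freely from a smooth $\w$, opening the door to a Bramble--Hilbert argument.

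For the bound on $i_h^n(\w,\w)$, on each patch I would invoke Bramble--Hilbert to produce $\pi_F \in \P{m}{\R^d}$ with $\|\w - \pi_F\|_{L^2(\omega_F)} \leq c\,h^{m+1}|\w|_{H^{m+1}(\omega_F)}$, uniformly in $F$ thanks to the shape-regularity and quasi-uniformity of $\Tht$. Then, combining the triangle inequality with (i) the $L^2$-stability of the canonical polynomial extension $\|\E q\|_{L^2(\omega_F)} \leq c\|q\|_{L^2(T_i)}$ for $q \in \P{m}{T_i}$ (a scaling argument from a reference patch), and (ii) the $L^2$-stability of $\Pi_{T_i}$, I would obtain
\begin{equation*}
  \|\jump{\w}\|_{L^2(\omega_F)} = \|\jump{\w-\pi_F}\|_{L^2(\omega_F)}
    \leq c\,\|\w - \pi_F\|_{L^2(\omega_F)}
    \leq c\,h^{m+1}|\w|_{H^{m+1}(\omega_F)}.
\end{equation*}
Squaring, multiplying by $h^{-2}$, and summing over $F \in \Fhnd$ — with the bounded overlap of patches once more controlled by shape regularity — yields the claimed bound $c\,h^{2m}\|\w\|_{H^{m+1}(\OdT{n})}^2$. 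The estimate for $j_h^n(r,r)$ proceeds analogously with projection degree $m-1$ (the pressure polynomial degree), Bramble--Hilbert now giving $\|r - \pi_F\|_{L^2(\omega_F)} \leq c\,h^m|r|_{H^m(\omega_F)}$; as $j_h^n$ carries no $h^{-2}$ prefactor, this directly produces the $h^{2m}$ factor.

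For the interpolation-error bounds, I would bypass Bramble--Hilbert and instead insert the standard nodal-interpolation estimates $\|\w - \Int\w\|_{L^2(\omega_F)} \leq c\,h^{m+1}|\w|_{H^{m+1}(\omega_F)}$ and $\|r - \Int r\|_{L^2(\omega_F)} \leq c\,h^m|r|_{H^m(\omega_F)}$. Applying exactly the same triangle/extension/projection chain to the arguments $\w - \Int\w$ and $r - \Int r$ in place of $\w - \pi_F$ and $r - \pi_F$ closes the argument. Here the vanishing-on-polynomials property is not actually needed; only the stability estimates are.

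I expect the main technical obstacle to be the $h$-uniform canonical-extension bound $\|\E q\|_{L^2(\omega_F)} \leq c\|q\|_{L^2(T_i)}$: although finite-dimensional in nature, it requires pulling back to a reference patch configuration so that the constant depends only on the shape-regularity and quasi-uniformity parameters of $\Tht$ rather than on the individual patch. Once this stability is in place (as established in \cite{Pre18,LO19} for the direct ghost-penalty variant), the remaining ingredients — patch-wise Bramble--Hilbert, $L^2$-stability of $\Pi_{T_i}$, and bounded facet-patch overlap — are entirely standard.
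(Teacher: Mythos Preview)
Your proposal is correct and spells out precisely the standard patch-wise argument (polynomial-annihilation of $\jump{\cdot}$, Bramble--Hilbert, and $L^2$-stability of the canonical extension and element projection) that the paper defers to via the reference \cite[Lemma~5.8]{LO19}. The only point the paper adds explicitly, which you might mention, is that for $d\leq 3$ and $m\geq 1$ the embedding $\pazocalbf{H}^{m+1}\hookrightarrow C^0$ ensures the Lagrange interpolant $\Int$ is well defined.
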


\begin{proof} We note that for $d\leq 3$ and $m \geq 1$ $\pazocalbf{H}^{m+1}$
is compactly embedded into $C^0$ so that $\Int$ is well-defined. For the
estimates with $i_h^n(\cdot,\cdot)$ we refer to \cite[Lemma~5.8]{LO19}. The
proof for $j_h^n(\cdot,\cdot)$ follows analogously with the different $h$
scaling in the definition of $j_h^n(\cdot,\cdot)$.
\end{proof}

\section{Analysis of the method}\label{sec:analysis}

The analysis of the fully discrete method in this section is structured as
follows. In Section~\ref{sec:analysis::subsec:preliminaries} we will introduce
further notation, concepts and basic necessary results from the literature
needed for the analysis. Section~\ref{sec:analysis::subsec:unique-solvability}
will then cover the existence and uniqueness of the solution to the fully
discretised system. The stability of this solution is then discussed in 
Section~\ref{sec:analysis::subsec:stability}. We then cover some technical
details on the geometry approximation made by integrating over discrete
approximations $\Onh$ of the exact domain $\On$ in
Section~\ref{sec:analysis::subsec:geometrical-approximation}. With the tools
covered in these sections, we then show the consistency of the method in both
time and space in Section~\ref{sec:analysis::subsec:consitency}. With this
consistency result we are then able to prove an error estimate for the solution
in the energy norm in Section~\ref{sec:analysis::subsec:error-estimates}.

\subsection{Preliminaries}\label{sec:analysis::subsec:preliminaries}

For the analysis we require some further notation and definitions. We define
the \emph{extension strip mesh} as
\begin{equation*}
	\TSp \coloneqq \{K\in\Tht\st \exists\x\in\Ot\setminus\Onh\text{ with }
	\dist(\x,\Ghn)\leq \delta_h \}.
\end{equation*}
We also define the \emph{sharp strips} as
\begin{equation*}
	\Spm_{\delta_h}(\Onh) \coloneqq \{\x\in\Ot\st\dist(\x,\Ghn)\leq\delta_h\}
	\quad\text{and}\quad \Sp_{\delta_h}(\Onh)\coloneqq
	\{\x\in\Ot\setminus\Onh\st\dist(\x,\Ghn)\leq\delta_h \}.
\end{equation*}
Furthermore, we define the \emph{discrete extended domain} $\Odh{\Onh}\coloneqq
\Spm_{\delta_h}(\Onh)\cup\Onh$. In the analysis we require that $\delta$ is
sufficiently large, such that
\begin{equation}\label{eqn:DiscreteDomainInclusions}
	\OdT{n}\subset\Od{\On}\quad\text{and}\quad\Onh\subset\Od{\O(t)},\,t\in I_n =
	[t_{n-1}, t_n),
\end{equation}
for $n=1,\dots,N$. Furthermore, for ease and brevity of notation we write
$a\leqc b$ if it holds that $a\leq cb$ with a constant $c>0$ independent of the
mesh size $h$, the time step $\dt$, the time $t$ and the mesh-interface cut
position. Similarly, we write $a\simeq b$ if both $a\leqc b$ and $b \leqc a$
holds.

For the analysis we shall consider the following mesh dependent norms. For the
velocity we take
\begin{align*}
	\tripnrm{\v}_n^2 &\coloneqq\nrm{\nabla\v}{\Onh}^2 +
		\nrm{h^{-\nf{1}{2}}\v}{\Ghn}^2 + 
		\nrm{h^{\nf{1}{2}}\partial_{\n}\v}{\Ghn}^2\\
	\tripnrmast{\v}{n}^2 &\coloneqq \nrm{\nabla\v}{\OdT{n}}^2 +
		\nrm{h^{-\nf{1}{2}}\v}{\G^n_h}^2, \\
	\nrm{\v}{-1,n} &\coloneqq
		\sup_{\w\in\Vnh}\frac{\lprod{\v}{\w}{\Onh}}{\tripnrmast{\w}{n}}
\end{align*}
for the pressure
\begin{align*}
	\tripnrm{q}^2_n &\coloneqq \nrm{q}{\Onh}^2 + \nrm{h^{\nf{1}{2}}q}{\Ghn}^2 \\
	\tripnrmast{q}{n} &\coloneqq \nrm{q}{\OO^n_{\T}}
\end{align*}
and for the product space
\begin{align*}
	\tripnrmast{(\u,p)}{n}^2 &\coloneqq  \tripnrmast{\u}{n}^2 +
	\tripnrmast{p}{n}^2.
\end{align*}
Note that $\tripnrm{\cdot}_{n}$-norms are defined on the physical domain and
add control on the normal derivative of the velocity and the trace of the
pressure at the boundary. This norm arises naturally to bound the bilinear
form $a_h^n(\u,\v)$ for functions $\u,\v \in \H{1}{}{\Onh}$. The second type
of norms, the $\tripnrmast{\cdot}{n}$-norms, are defined on the entire active
domain and therefore represent proper norms for discrete finite element
functions.

\subsubsection{Basic estimates}
\label{sec:analysis::subsec:prelim::subsubsec:estimates}

For $v_h \in \P{k}{K},~K\in \Tht$ we have the inverse and trace estimates:
\begin{subequations}
	\begin{align}
          \nrm{\nabla v_h}{K} &\leqc h_K^{-1}\nrm{v_h}{K},
\label{eqn:InvEst:DerivEl}\\
          \nrm{h^{\nf{1}{2}}\partial_{\n}v_h}{F} &\leqc \nrm{\nabla v_h}{K},
\label{eqn:TraceEstNormDerivFacet}\\
\nrm{h^{\nf{1}{2}}\partial_{\n}v_h}{K\cap\Ghn} &\leqc \nrm{\nabla v_h}{K}
\label{eqn:TraceEstNormDerivBnd}
	\end{align}
\end{subequations}
      
For \eqref{eqn:InvEst:DerivEl} and \eqref{eqn:TraceEstNormDerivFacet} see for
example \cite{Qua14}. For a proof of \eqref{eqn:TraceEstNormDerivBnd}, see
\cite{HH02}. Furthermore, for $v\in\pazocal{H}^{1}(K),~K \in \Thn$ we have the
following trace inequality
	\begin{align}
		\nrm{v}{K\cap\Ghn} &\leqc h^{-\nf{1}{2}}_K\nrm{v}{K} +
			h^{\nf{1}{2}}_K\nrm{\nabla v}{K} \label{eqn:TraceIneqBnd}
	\end{align}
See \cite{HH02} for \eqref{eqn:TraceIneqBnd}. It follows from these estimates
that
\begin{equation}\label{eqn:TripNormEst}
	\tripnrm{\v_h}_n \leqc \tripnrmast{\v_h}{n}\quad \text{and}
	\quad\tripnrm{q_h}_n\leqc \tripnrmast{q_h}{n}
\end{equation}
for all $\v_h\in\V_h^n$ and $q_h\in Q_h^n$. Furthermore, we have a discrete
version of the Poincaré inequality
\begin{equation}\label{eqn:Poincare:tripnrm}
	\nrm{\v_h}{\OdT{n}} \leq c_{P,h} \tripnrmast{\v_h}{n}
\end{equation}
for all $\v_h\in\V_h$, see \cite[Lemma 7.2]{MLLR14} for a proof thereof.

\subsubsection{Interpolation properties}
\label{sec:analysis::subsubsec:interpolation}

\begin{lemma}\label{lemma:InterpolationEstimate}

Let $K\in\Tht$ and $w\in\pazocal{H}^{m+1}(K)$, $m\geq 1$. Then it holds for the
Lagrange interpolant
\begin{equation*}
	\nrm{D^k(w-\Int w)}{K} \leq h^{m+1-k}\nrm{D^{m+1}w}{K},
	\qquad 0\leq k\leq m,
\end{equation*}
and on the boundary of an element
\begin{equation*}
	\nrm{w-\Int w}{\partial K} \leq h^{m+\nf{1}{2}}\nrm{D^{m+1}w}{K}.
\end{equation*}
\end{lemma}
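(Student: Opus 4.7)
The plan is to reduce this to the classical Bramble--Hilbert interpolation estimate on a reference simplex via a scaling argument, which works because $\Tht$ is assumed shape-regular and quasi-uniform. Before doing anything I would first note that $\Int w$ is well-defined: the Sobolev embedding $\pazocal{H}^{m+1}(K) \hookrightarrow C^0(\overline{K})$ holds for all $d \leq 3$ and $m \geq 1$, so the Lagrange nodal values make sense.

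The interior bound would be produced in three standard steps. First, on a reference simplex $\widehat K$ with the reference Lagrange interpolant $\widehat\Int$ of polynomial degree $m$, the operator $\mathrm{Id} - \widehat\Int$ is bounded from $\pazocal{H}^{m+1}(\widehat K)$ into $\pazocal{H}^{m+1}(\widehat K)$ and annihilates $\P{m}{\widehat K}$, so the Bramble--Hilbert lemma yields
\begin{equation*}
	\nrm{D^k(\widehat w - \widehat\Int \widehat w)}{\widehat K}
	\leqc |\widehat w|_{\pazocal{H}^{m+1}(\widehat K)}
	\qquad\text{for } 0\leq k \leq m+1.
\end{equation*}
Second, I would invoke the affine map $F_K:\widehat K \to K$; shape-regularity and quasi-uniformity give $\|DF_K\|\simeq h$, $\|DF_K^{-1}\|\simeq h^{-1}$ and $|\det DF_K|\simeq h^d$. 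Transforming both sides, each derivative costs a factor $h^{-1}$ on the left and $h^{-1}$ on the right, while the integration substitution contributes $h^{d/2}$ on both sides and cancels. Collecting the exponents produces exactly $h^{m+1-k}$ on $\nrm{D^{m+1}w}{K}$, which is the first claim.

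For the boundary bound I would combine the interior estimate with the trace inequality \eqref{eqn:TraceIneqBnd} applied to $v = w - \Int w$, namely
\begin{equation*}
	\nrm{w - \Int w}{\partial K}
	\leqc h^{-\nf{1}{2}}\nrm{w - \Int w}{K} + h^{\nf{1}{2}}\nrm{\nabla(w-\Int w)}{K}.
\end{equation*}
Plugging in the interior estimate with $k=0$ and $k=1$ gives two contributions of order $h^{m+\nf{1}{2}}\nrm{D^{m+1}w}{K}$, which is the second claim. The only substantive bookkeeping is tracking the powers of $h$ in the scaling step; no genuine obstacle is anticipated, as this lemma is a routine textbook computation (e.g.\ Ern--Guermond or Brenner--Scott) and is recorded here only to fix the constants and polynomial degrees used in later error estimates.
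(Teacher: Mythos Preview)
Your argument is correct and is exactly the standard Bramble--Hilbert plus affine scaling proof; the paper itself does not give a proof but simply cites a reference (Apel), so your sketch is essentially what that citation contains. One minor remark: the trace inequality you invoke as \eqref{eqn:TraceIneqBnd} is stated in the paper for the cut boundary $K\cap\Ghn$, not for $\partial K$; the inequality you actually need on $\partial K$ is the classical scaled trace estimate $\nrm{v}{\partial K}\leqc h^{-\nf{1}{2}}\nrm{v}{K}+h^{\nf{1}{2}}\nrm{\nabla v}{K}$, which is equally standard but not the one labelled \eqref{eqn:TraceIneqBnd}.
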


\begin{proof}
	See for example \cite{Ape99}.
\end{proof}

\begin{lemma}\label{lemma:InterpolationEstTripNrm}
Let $\v\in\H{m+1}{}{\On}$ and $q\in\pazocal{H}^m(\On)$, $m\geq 1$. Then it
holds that
\begin{subequations}
\begin{align}
	\tripnrm{\Ex \v-\Int \Ex \v}_n &\leqc h^m\nrm{\v}{\H{m+1}{}{\On}}
		\label{eqn:TripnrmInterpolVel} \\
	\tripnrm{\Ex q-\Int \Ex q}_n &\leqc h^m\nrm{q}{\pazocal{H}^m(\On)}.
		\label{eqn:TripnrmInterpolPre}
\end{align}
\end{subequations}
\end{lemma}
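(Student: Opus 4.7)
The proof will proceed by element-wise localisation, reducing the triple norms to sums of local contributions on the active mesh $\Thn$ (for bulk terms) and on the boundary elements $\ThGhn$ (for trace terms). The main ingredients are the element-wise Lagrange interpolation estimates of Lemma~\ref{lemma:InterpolationEstimate}, the trace inequality~\eqref{eqn:TraceIneqBnd} to convert boundary integrals to volume integrals, and the continuity estimate~\eqref{eqn:ExtentionExtimate} of the extension operator, combined with the inclusion $\OdT{n}\subset\Od{\On}$ from~\eqref{eqn:DiscreteDomainInclusions}, to transfer the resulting bound from the extended mesh domain back to the physical domain $\On$.

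For the velocity estimate~\eqref{eqn:TripnrmInterpolVel}, set $e := \Ex\v - \Int\Ex\v$ and split $\tripnrm{e}_n^2$ into the three parts $\nrm{\nabla e}{\Onh}^2$, $\nrm{h^{-\nf{1}{2}}e}{\Ghn}^2$ and $\nrm{h^{\nf{1}{2}}\partial_{\n} e}{\Ghn}^2$. The first is immediate from Lemma~\ref{lemma:InterpolationEstimate} with $k=1$ summed over $K\in\Thn$. For the second, sum over $K\in\ThGhn$ and apply~\eqref{eqn:TraceIneqBnd} to transfer $\nrm{e}{K\cap\Ghn}^2$ into $h^{-1}\nrm{e}{K}^2 + h\nrm{\nabla e}{K}^2$; Lemma~\ref{lemma:InterpolationEstimate} with $k=0$ and $k=1$ then gives each element contribution the common factor $h^{2m}\nrm{D^{m+1}\Ex\v}{K}^2$ after accounting for the $h^{-\nf{1}{2}}$ weight. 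For the third, apply the trace inequality~\eqref{eqn:TraceIneqBnd} to the scalar quantity $\partial_{\n} e$ to get $h^{-1}\nrm{\nabla e}{K}^2 + h\nrm{D^2 e}{K}^2$, each then controlled by Lemma~\ref{lemma:InterpolationEstimate} with $k=1$ and $k=2$ respectively so that the $h$-powers again balance to $h^{2m}\nrm{D^{m+1}\Ex\v}{K}^2$. Summing over elements yields a bound of the form $h^{2m}\nrm{\Ex\v}{\H{m+1}{}{\OdT{n}}}^2$, and \eqref{eqn:ExtentionExtimate} closes the estimate by $h^{2m}\nrm{\v}{\H{m+1}{}{\On}}^2$.

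The pressure estimate~\eqref{eqn:TripnrmInterpolPre} is analogous and simpler. Setting $f := \Ex q - \Int\Ex q$, the bulk term $\nrm{f}{\Onh}^2\leq\sum_{K\in\Thn}\nrm{f}{K}^2$ is bounded by Lemma~\ref{lemma:InterpolationEstimate} applied with regularity $\pazocal{H}^m$ and $k=0$. For the trace term $h\nrm{f}{\Ghn}^2$, the trace inequality~\eqref{eqn:TraceIneqBnd} produces $\sum_{K\in\ThGhn}(\nrm{f}{K}^2 + h^2\nrm{\nabla f}{K}^2)$, and the $h$-weights balance so that Lemma~\ref{lemma:InterpolationEstimate} applied with $k=0$ and $k=1$ gives both contributions the rate $h^{2m}\nrm{D^m\Ex q}{K}^2$. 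Summing and invoking \eqref{eqn:ExtentionExtimate} on the extended domain concludes the proof.

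The only nontrivial step is the treatment of the normal-derivative boundary term of the velocity, where one is forced to control $\nrm{D^2 e}{K}^2$. For $m\geq 2$ this is immediate from Lemma~\ref{lemma:InterpolationEstimate} with $k=2$, but the borderline case $m=1$ requires a short detour: by a triangle inequality $\nrm{D^2 e}{K}\leq\nrm{D^2\Ex\v}{K}+\nrm{D^2\Int\Ex\v}{K}$, the second piece is bounded by an inverse estimate applied to the polynomial $\Int\Ex\v$ and a Bramble--Hilbert argument on $\Int\Ex\v - q$ with $q$ a suitable linear Taylor polynomial, the invariance of $\Int$ on polynomials of degree $\leq 1$ removing the obstructing lower-order terms. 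This is the main technical obstacle; once it is dispatched, the remainder of the calculation is a routine bookkeeping of $h$-powers.
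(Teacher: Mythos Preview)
Your approach is essentially identical to the paper's: element-wise localisation, the trace inequality~\eqref{eqn:TraceIneqBnd} to pass from boundary to volume, Lemma~\ref{lemma:InterpolationEstimate} to close each local contribution, and finally~\eqref{eqn:ExtentionExtimate} together with~\eqref{eqn:DiscreteDomainInclusions} to return to norms on $\On$. Your careful handling of the $\nrm{D^2 e}{K}$ term in the borderline case $m=1$ is in fact more explicit than the paper's, which simply writes $h\nrm{\nabla^2(\w-\Int\w)}{K}\leqc h^m\nrm{\w}{\H{m+1}{}{K}}$ without comment (implicitly relying on the $k=m+1$ instance of the standard interpolation estimate, which is classical but not covered by the range $0\leq k\leq m$ stated in Lemma~\ref{lemma:InterpolationEstimate}).

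There is, however, one small gap in your treatment of the pressure estimate~\eqref{eqn:TripnrmInterpolPre} at $m=1$. In that case $q\in\pazocal{H}^1(\On)$ only, and in dimension $d\geq 2$ such functions need not be continuous, so the Lagrange interpolant $\Int\Ex q$ is not well-defined and Lemma~\ref{lemma:InterpolationEstimate} cannot be invoked. The paper addresses this by switching to the Cl\'ement interpolation operator for the pressure when $m=1$; once that substitution is made, your argument goes through unchanged.
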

\begin{proof}
See also \cite[Lemma~4.1]{MLLR14}. For \eqref{eqn:TripnrmInterpolVel}, we
consider the volume and boundary contribution to the $\tripnrm{\cdot}_n$-norm
separately. Let $\w = \Ex \v$.
	
We split the volume terms into element contributions. On interior elements, we
simply apply Lemma~\ref{lemma:InterpolationEstimate}. For cut simplices, we
extend the norm onto the entire element before applying the interpolation
estimate. Summing up over all active elements and applying
\eqref{eqn:ExtentionExtimate} in combination with
\eqref{eqn:DiscreteDomainInclusions} gives
\begin{equation*}
	\nrm{\nabla(\w-\Int\w)}{\Onh} \leqc h^m\nrm{\w}{\H{m+1}{}{\OdT{n}}} \leqc
	h^m\nrm{\v}{\H{m}{}{\On}}.
\end{equation*}
Similarly, on each cut element we can estimate the boundary terms using
\eqref{eqn:TraceIneqBnd} and apply the interpolation estimate:
\begin{equation*}
	\nrm{h^{\nf{1}{2}}\partial_{\n}(\w-\Int\w) }{K\cap\Ghn}\leqc
	\nrm{\nabla(\w-\Int\w)}{K} + h\nrm{\nabla^2(\w-\Int\w)}{K}\leqc
	h^m\nrm{\w}{\H{m+1}{}{K}}
\end{equation*}
and
\begin{equation*}
	\nrm{h^{-\nf{1}{2}}(\w-\Int\w)}{K\cap\Ghn}\leqc h^{-1}\nrm{\w-\Int\w}{K} +
	\nrm{\nabla(\w-\Int\w)}{K} \leqc h^m\nrm{\w}{\H{m+1}{}{K}}.
\end{equation*}
Summing up over all cut elements and again applying
\eqref{eqn:ExtentionExtimate} in combination with
\eqref{eqn:DiscreteDomainInclusions} then proves
\eqref{eqn:TripnrmInterpolVel}. The proof for \eqref{eqn:TripnrmInterpolPre}
follows along the same lines of argument and in the case of $m=1$ we consider
the Clément interpolation operator.
\end{proof}

\subsubsection{The ghost-penalty mechanism}
\label{sec:analysis::subsubsec:ghost-penalty-mechanism}

\begin{assumption}\label{assumption:StripWidth}
We assume that for every strip element $K\in\TSp$ there exists an uncut
element $K'\in\Thn\setminus\TSp$ which can be reached by a path which crosses a
bounded number of facets $F\in\Fhn$. We assume that the number of facets which
have to be crossed to reach $K'$ from $K$ is bounded by a constant $L\leqc
(1+\frac{\delta_h}{h})$ and that every uncut element $K'\in\Thn\setminus\TSp$
is the end of at most $M$ such paths with $M$ bounded independent of $\dt$ and
$h$. In other words, each uncut elements "supports" at most $M$ strip elements.
\end{assumption}

See \cite[Remark~5.4]{LO19} for a justification as to why the above assumption
is reasonable if the mesh resolves the domain boundary sufficiently well.

\begin{lemma}\label{lemma:TriNrmEquiv}

  For all $\v_h\in\V_h^n$ and $q_h\in Q_h^n$ it holds that
\begin{subequations}
\begin{align}
	\nrm{\nabla\v_h}{\OdT{n}}^2 &\simeq \nrm{\nabla\v_h}{\Onh}^2 + L\cdot
		i^n_h(\v_h,\v_h) \label{eqn:VelNormEqiv}\\
	\nrm{\v_h}{\OdT{n}}^2 &\simeq \nrm{\v_h}{\Onh}^2 + h^2 L\cdot i^n_h(\v_h,\v_h)
		\label{eqn:VelL2NormEqiv}\\
	\nrm{q_h}{\OO^n_{\T}}^2 &\simeq \nrm{q_h}{\Onh}^2 +
		j^n_h(q_h,q_h).\label{eqn:PreNormEqiv}
\end{align}
\end{subequations}
\end{lemma}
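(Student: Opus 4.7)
The plan is to establish all three equivalences from a common template based on the path argument enabled by Assumption \ref{assumption:StripWidth}. I focus on the gradient version \eqref{eqn:VelNormEqiv}, as \eqref{eqn:VelL2NormEqiv} follows along the same lines with an extra $h^2$ factor traced through the local inverse estimates, and \eqref{eqn:PreNormEqiv} is simpler because the pressure ghost penalty acts only on facets in $\Fhn$ adjacent to the cut boundary, so its effective path length is $O(1)$ and no $L$ factor appears.

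For the easier direction $\nrm{\nabla \v_h}{\Onh}^2 + L\, i_h^n(\v_h,\v_h) \leqc \nrm{\nabla \v_h}{\OdT{n}}^2$, the first summand is immediate from $\Onh \subset \OdT{n}$. For the ghost-penalty term, I exploit that the Taylor--Hood velocity space is globally continuous, so $\jump{\v_h}$ is a polynomial on $\omega_F$ which vanishes on the facet $F$; a Poincaré-type estimate for such polynomials then yields $\nrm{\jump{\v_h}}{\omega_F}^2 \leqc h^2\,\nrm{\nabla \jump{\v_h}}{\omega_F}^2$, and combined with polynomial norm equivalence $\nrm{\nabla \E \v_h|_{T_i}}{\omega_F}^2 \leqc \nrm{\nabla \v_h}{T_i}^2$ on shape-regular patches, summation over $F \in \Fhnd$ with finite overlap multiplicity closes this direction.

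For the harder direction $\nrm{\nabla \v_h}{\OdT{n}}^2 \leqc \nrm{\nabla \v_h}{\Onh}^2 + L\, i_h^n(\v_h,\v_h)$ I split $\OdT{n} = \Onh \cup (\OdT{n}\setminus\Onh)$; the contribution on $\Onh$ matches the first term on the right directly. For the remaining strip contribution, Assumption \ref{assumption:StripWidth} associates to each $K \in \TSp$ an uncut element $K' \in \Thn \setminus \TSp$ reached via a facet path $F_1,\dots,F_{L_K}$ with $L_K \leqc L$ through intermediate elements $K = K_0,\dots,K_{L_K} = K'$. Telescoping the canonical polynomial extensions $\E \v_h|_{K_i}$ and applying, term by term, (i) polynomial norm equivalence on shape-regular patches to obtain $\nrm{\nabla \E \v_h|_{K'}}{K}^2 \leqc \nrm{\nabla \v_h}{K'}^2$ and (ii) local inverse estimates to get $\nrm{\nabla \jump{\v_h}}{K}^2 \leqc h^{-2}\nrm{\jump{\v_h}}{\omega_{F_i}}^2$, I obtain
\begin{equation*}
  \nrm{\nabla \v_h}{K}^2 \;\leq\; 2\,\nrm{\nabla \E \v_h|_{K'}}{K}^2 + 2\,L_K \sum_{i=1}^{L_K} h^{-2}\nrm{\jump{\v_h}}{\omega_{F_i}}^2
  \;\leqc\; \nrm{\nabla \v_h}{K'}^2 + L \sum_{i=1}^{L_K} h^{-2}\nrm{\jump{\v_h}}{\omega_{F_i}}^2.
\end{equation*}
The key trick is to first use $(a+b)^2 \leq 2a^2 + 2b^2$ to separate the endpoint norm from the path sum, and only then apply Cauchy--Schwarz within the sum alone, which places the $L$ factor solely on the ghost-penalty piece rather than also on $\nrm{\nabla \v_h}{K'}^2$. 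Summing over $K \in \TSp$ and invoking the second part of Assumption \ref{assumption:StripWidth} -- each uncut $K'$ is the endpoint of at most $M = O(1)$ paths, and each facet belongs to at most $O(1)$ paths -- gives the claimed global inequality.

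The main obstacle is precisely this ordering of the squaring step: a direct $(L_K+1)$-term Cauchy--Schwarz on the whole telescoping sum would also inflate $\nrm{\nabla \v_h}{K'}^2$ by $L$, which the statement of the lemma does not permit. Once this subtlety is handled and the bounded-multiplicity conditions in Assumption \ref{assumption:StripWidth} are used to turn the per-element bound into a global one, the $L^2$ version \eqref{eqn:VelL2NormEqiv} follows by replacing the gradient inverse estimates with their $L^2$ counterparts (producing the $h^2 L$ scaling on the ghost-penalty term), and the pressure version \eqref{eqn:PreNormEqiv} follows with $L$ replaced by an absolute constant because of the geometry of $\Fhn$.
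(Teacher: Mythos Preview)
Your path-telescoping argument for the harder direction is exactly the mechanism behind \cite[Lemma~5.5]{LO19}, which is what the paper cites rather than reproduces; your remark about separating the endpoint term before applying Cauchy--Schwarz on the telescope is the crux that keeps the $L$ factor off $\nrm{\nabla\v_h}{K'}^2$. For the easier direction, your Poincar\'e-on-the-facet argument (using that $\jump{\v_h}$ vanishes on $F$ by continuity) is equivalent to the paper's route via the normal-derivative-jump ghost penalty and inverse trace estimates (citing \cite{Pre18} and \cite[Proposition~5.1]{MLLR14}). So the approaches coincide; you have simply unpacked what the paper delegates to references.

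One gap, which the paper's terse treatment also glosses over: your easier-direction argument yields only $i_h^n(\v_h,\v_h) \leqc \nrm{\nabla\v_h}{\OdT{n}}^2$, not $L\cdot i_h^n(\v_h,\v_h) \leqc \nrm{\nabla\v_h}{\OdT{n}}^2$ with an $L$-independent constant, and finite facet overlap does not recover the missing factor. Indeed the latter bound cannot hold uniformly in $L$: take a continuous piecewise-linear $\v_h$ whose gradient alternates sign across each of the $\sim L$ layers of the strip, so that already $i_h^n(\v_h,\v_h) \simeq \nrm{\nabla\v_h}{\OdT{n}}^2$. The equivalence in the lemma should therefore be read as the harder direction together with the unweighted reverse bound $i_h^n \leqc \nrm{\nabla\v_h}{\OdT{n}}^2$; this is what you actually prove and what the subsequent analysis (coercivity, stability) genuinely requires.
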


\begin{proof}
For the first inequality in both \eqref{eqn:VelNormEqiv} and
\eqref{eqn:PreNormEqiv} we refer to \cite[Lemma~5.5]{LO19}. For the second
bound we use the fact that we can bound the direct version of the ghost penalty
operator by the normal derivative jump version, see \cite[Chapter~3,
Remark~6]{Pre18}. The upper bound then follows by an application of the
inverse (trace) estimates \eqref{eqn:InvEst:DerivEl} and
\eqref{eqn:TraceEstNormDerivFacet}, see \cite[Proposition~5.1]{MLLR14}.
\end{proof}

\subsection{Unique solvability}
\label{sec:analysis::subsec:unique-solvability}

\begin{lemma}[Continuity]\label{lemma:continuity}
For the diffusion bilinear form we have for all $\v,\w\in\H{1}{}{\Od{\On}}$
that it holds
\begin{equation}\label{eqn:DiffusionContinuity}
	a_h^n(\v,\w) \leqc \nu\tripnrm{\v}_n\tripnrm{\w}_n
\end{equation}
and for all $\v_h,\w_h\in\V_h^n$ it holds that
\begin{equation}\label{eqn:DiffusionGPContinuity}
	a_h^n(\v_h,\w_h) + \nu L i_h^n(\v_h,\w_h)
	\leqc \nu \tripnrmast{\v_h}{n}\tripnrmast{\w_h}{n}.
\end{equation}
Furthermore, for the velocity-pressure coupling bilinear form we have for all
$q\in\pazocal{L}^2(\Od{\On})$ and $\v\in\H{1}{}{\Od{\On}}$ that
\begin{equation*}
	b_h^n(q,\v) \leqc \tripnrm{q}_n\tripnrm{\v}_n
\end{equation*}
and for all $q_h\in Q_h^n$ and $\v_h\in\V_h^n$ that
\begin{equation*}
	b_h^n(q_h,\v_h) \leqc \tripnrmast{q_h}{n}\tripnrmast{\v_h}{n}.
\end{equation*}
\end{lemma}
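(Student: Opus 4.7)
The plan is to expand each bilinear form into its volume and boundary contributions and apply Cauchy-Schwarz with $h$-weights chosen to match the two triple-norms. For the discrete variants \eqref{eqn:DiffusionGPContinuity} and the second $b_h^n$ bound, the additional ingredient is Lemma~\ref{lemma:TriNrmEquiv}, which allows one to reroute the ghost-penalty contribution into a star-norm quantity on $\OdT{n}$, while \eqref{eqn:TripNormEst} immediately converts $\tripnrm{\cdot}_n$ into $\tripnrmast{\cdot}{n}$ for the remaining discrete contributions.

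For \eqref{eqn:DiffusionContinuity}, the volume contribution of $a_h^n(\v,\w)$ is bounded by $\nu\nrm{\nabla\v}{\Onh}\nrm{\nabla\w}{\Onh}$ via Cauchy-Schwarz. The two consistency integrals $\nu N_{h,c}^n(\v,\w)$ and $\nu N_{h,c}^n(\w,\v)$ are split as $h^{1/2}\cdot h^{-1/2}$ and bounded by $\nu\nrm{h^{1/2}\partial_{\n}\v}{\Ghn}\nrm{h^{-1/2}\w}{\Ghn}$ and its symmetric counterpart, and the penalty term $\nu N_{h,s}^n(\v,\w)$ gives $\sigma\nu\nrm{h^{-1/2}\v}{\Ghn}\nrm{h^{-1/2}\w}{\Ghn}$; each factor appears directly in the definition of $\tripnrm{\cdot}_n$. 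For \eqref{eqn:DiffusionGPContinuity}, combining \eqref{eqn:DiffusionContinuity} with \eqref{eqn:TripNormEst} handles the $a_h^n$ part, while for the $\nu L\, i_h^n$ term Cauchy-Schwarz applied to the positive semi-definite form $i_h^n$ gives
\begin{equation*}
\nu L\, i_h^n(\v_h,\w_h) \leq \nu \sqrt{L\, i_h^n(\v_h,\v_h)}\sqrt{L\, i_h^n(\w_h,\w_h)},
\end{equation*}
and the norm equivalence \eqref{eqn:VelNormEqiv} bounds each of these factors by $\nrm{\nabla\v_h}{\OdT{n}}$ (respectively $\nrm{\nabla\w_h}{\OdT{n}}$), which are contained in $\tripnrmast{\cdot}{n}$.

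For $b_h^n(q,\v)$, the volume part $-(q,\nabla\cdot\v)_{\Onh}$ is bounded by $\nrm{q}{\Onh}\nrm{\nabla\v}{\Onh}$ using $\nrm{\nabla\cdot\v}{\Onh}\leq\sqrt{d}\,\nrm{\nabla\v}{\Onh}$, and the boundary part $\int_{\Ghn}q\,\v\cdot\n\,\dif s$ is split as $h^{1/2}\cdot h^{-1/2}$ and bounded by $\nrm{h^{1/2}q}{\Ghn}\nrm{h^{-1/2}\v}{\Ghn}$. Each summand matches a component of $\tripnrm{q}_n\tripnrm{\v}_n$, which proves the third bound. The fourth bound then follows from the same estimate together with \eqref{eqn:TripNormEst}, since $b_h^n$ involves no ghost-penalty term on either argument. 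The only technical subtlety throughout is choosing the $h$-weights in the Cauchy-Schwarz steps so that the powers agree with those in the respective triple norms; the single non-routine ingredient is the application of Lemma~\ref{lemma:TriNrmEquiv} to reroute the $L\, i_h^n$ contribution, and I expect this ghost-penalty step to be the main (minor) obstacle of the proof.
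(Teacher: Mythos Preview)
Your proof is correct and follows essentially the same approach as the paper: the paper simply cites \cite{BH12} and \cite{MLLR14} for the Cauchy--Schwarz estimates on $a_h^n$ and $b_h^n$ and points to Lemma~\ref{lemma:TriNrmEquiv} for the ghost-penalty scaling in \eqref{eqn:DiffusionGPContinuity}, which is precisely the argument you have spelled out in detail.
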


\begin{proof}
See \cite{BH12} for the diffusion bilinear form and \cite{MLLR14} for the
pressure coupling bilinear form. The scaling of the ghost-penalty term in
\eqref{eqn:DiffusionGPContinuity} is due to the larger extension strip and 
Lemma~\ref{lemma:TriNrmEquiv}.
\end{proof}

\begin{lemma}[Coercivity]\label{lemma:coercivity}
There exists a constant $c_{L\ref{lemma:coercivity}}>0$, independent of $h$
and the mesh-interface cut position, such that for sufficiently large $\sigma >
0$ there holds
\begin{equation*}
	a^n_h(\u_h,\u_h) + \nu L i^n_h(\u_h,\v_h) \geq \nu c_{L\ref{lemma:coercivity}}
	\tripnrmast{\u_h}{n}^2
\end{equation*}
for all $\u_h\in\V_h$.
\end{lemma}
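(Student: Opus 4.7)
The plan is to follow the classical Nitsche coercivity argument, and then use the ghost-penalty contribution together with Lemma~\ref{lemma:TriNrmEquiv} to bridge the gap between the physical domain $\Onh$ (where $a_h^n(\u_h,\u_h)$ naturally controls $\nrm{\nabla\u_h}{\Onh}^2$) and the full active domain $\OdT{n}$ (where the $\tripnrmast{\cdot}{n}$-norm lives).

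First I would expand $a_h^n(\u_h,\u_h) = \nu\nrm{\nabla\u_h}{\Onh}^2 + \nu N_h^n(\u_h,\u_h)$ and write $\nu N_h^n(\u_h,\u_h) = -2\nu\int_{\Ghn}\partial_{\n}\u_h\cdot\u_h\dif s + \nu\frac{\sigma}{h}\nrm{\u_h}{\Ghn}^2$. Then, the Cauchy–Schwarz inequality and Young's inequality with a parameter $\eps>0$ bound the consistency term by $\nu\eps\nrm{h^{\nf{1}{2}}\partial_{\n}\u_h}{\Ghn}^2 + \frac{\nu}{\eps}\nrm{h^{-\nf{1}{2}}\u_h}{\Ghn}^2$. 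The first of these is controlled via the trace inequality \eqref{eqn:TraceEstNormDerivBnd}, summed over all cut elements $K\in\ThGhn$, by $c_{\mathrm{tr}}\nu\eps\nrm{\nabla\u_h}{\OdT{n}}^2$, the key point being that the right-hand side lives on full elements in $\OdT{n}$ and not only on $K\cap\Onh$.

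Now comes the crucial use of the ghost penalty. From \eqref{eqn:VelNormEqiv} in Lemma~\ref{lemma:TriNrmEquiv}, there is a constant $c_1>0$ with
\begin{equation*}
  c_1\nrm{\nabla\u_h}{\OdT{n}}^2 \leq \nrm{\nabla\u_h}{\Onh}^2 + L\, i_h^n(\u_h,\u_h).
\end{equation*}
Combining everything gives
\begin{equation*}
  a_h^n(\u_h,\u_h) + \nu L\, i_h^n(\u_h,\u_h) \geq \nu\bigl(\nrm{\nabla\u_h}{\Onh}^2 + L\, i_h^n(\u_h,\u_h)\bigr) - \nu\eps c_{\mathrm{tr}}\nrm{\nabla\u_h}{\OdT{n}}^2 + \nu\bigl(\sigma-\tfrac{1}{\eps}\bigr)\nrm{h^{-\nf{1}{2}}\u_h}{\Ghn}^2.
\end{equation*}
Choosing $\eps = c_1/(2c_{\mathrm{tr}})$ absorbs half of the gradient contribution and leaves $\tfrac{\nu c_1}{2}\nrm{\nabla\u_h}{\OdT{n}}^2$. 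Then fixing $\sigma$ large enough that $\sigma - 1/\eps \geq \sigma/2$ (equivalently $\sigma \geq 4c_{\mathrm{tr}}/c_1$) makes the boundary penalty term control $\nrm{h^{-\nf{1}{2}}\u_h}{\Ghn}^2$ with a positive constant. Summing the two positive contributions yields the claim with $c_{L\ref{lemma:coercivity}} = \min(c_1/2,\sigma/2)$.

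The main subtlety I expect is ensuring that the constants involved in the ghost-penalty norm equivalence, in particular the factor $L$ stemming from Assumption~\ref{assumption:StripWidth}, enter cleanly; the lemma statement is formulated with the explicit factor $\nu L$ in front of $i_h^n$ precisely to match the equivalence \eqref{eqn:VelNormEqiv}. The rest is a standard combination of trace inequalities, Young's inequality, and a sufficiently large Nitsche penalty $\sigma$, with the threshold for $\sigma$ determined by the trace and ghost-penalty constants (both of which are independent of $h$ and the cut position).
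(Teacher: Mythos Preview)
Your proof is correct and follows essentially the same approach as the paper. The paper does not spell out the argument but simply refers to \cite[Lemma~6]{BH12} or \cite[Lemma~4.2]{BH14} for the standard Nitsche coercivity proof and notes that the extra factor $L$ in front of $i_h^n$ stems from the ghost-penalty norm equivalence Lemma~\ref{lemma:TriNrmEquiv}; you have written out precisely this combination of trace inequality, Young's inequality, and \eqref{eqn:VelNormEqiv} in detail.
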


\begin{proof}
See for example \cite[Lemma~6]{BH12} or \cite[Lemma~4.2]{BH14}. The
different scaling is again due to Lemma~\ref{lemma:TriNrmEquiv}.
\end{proof}

\begin{lemma}\label{lemma:bad-inf-sup}

Let $\On_{h,i}\coloneqq \Onh\setminus \{\x\in K\st K\in \ThGhn
\}$ denote the interior, uncut domain. It then holds for all $q_h\in\Qnh$ with
$\restr{q_h}{\On_{h,i}}\in\Ltwozero{\On_{h,i}}$ that
\begin{equation}\label{eqn:bad-inf-sup}
	\beta\nrm{q_h}{\Onh} \leq \sup_{\v_h\in\Vnh}
	\frac{b_h(q_h,\v_h)}{\tripnrmast{\v_h}{n}} + j_h^n(q_h,q_h)^{\nf{1}{2}}.
\end{equation}
The constant $\beta>0$ is independent of $h$ and $q_h$.
\end{lemma}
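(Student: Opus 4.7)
The plan is to decompose the pressure norm into contributions from the interior uncut subdomain $\On_{h,i}$ and the cut strip $\Onh\setminus \On_{h,i}$, to control the first via the classical fitted Taylor--Hood inf-sup on $\On_{h,i}$, and to transfer that control to the cut strip through the pressure ghost-penalty chain.

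As a first step I would establish the localised norm equivalence
\begin{equation*}
\nrm{q_h}{\Onh}^2 \leqc \nrm{q_h}{\On_{h,i}}^2 + j_h^n(q_h,q_h),
\end{equation*}
following the logic of Lemma~\ref{lemma:TriNrmEquiv}, equation~\eqref{eqn:PreNormEqiv}, but localised to the transition between the interior subdomain and the boundary strip $\ThGhn$. Under Assumption~\ref{assumption:StripWidth} every cut element in $\ThGhn$ is joined to an uncut element in $\Thn\setminus\ThGhn$ by a chain of at most $L$ interior facets in $\Fhn$; a telescoping argument with the direct ghost-penalty integrated jump on each patch $\omega_F$, combined with finite-dimensional polynomial equivalence on the patches, then bounds the $\pazocal{L}^2(K)$-norm on a cut element $K$ by the norm on an uncut interior element plus the accumulated jumps.

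Next, since $q_h\rvert_{\On_{h,i}}\in\Ltwozero{\On_{h,i}}$ and $\On_{h,i}$ is a union of full, shape-regular mesh elements, I would invoke the standard fitted Taylor--Hood inf-sup condition on $\On_{h,i}$ to obtain a $\v\in\H{1}{0}{\On_{h,i}}$, piecewise in $[\P{k}{K}]^d$, with $\nrm{\nabla\v}{\On_{h,i}}\leqc \nrm{q_h}{\On_{h,i}}$ and $-\lprod{q_h}{\nabla\cdot\v}{\On_{h,i}}\geq \tilde\beta\,\nrm{q_h}{\On_{h,i}}^2$ for some $\tilde\beta>0$ uniform in $h$ and $n$. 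Extending $\v$ by zero to $\OdT{n}$ gives a valid element of $\Vnh$ that vanishes in an open neighbourhood of $\Ghn$, so the Nitsche boundary contributions in both $b_h^n(q_h,\v)$ and $\tripnrmast{\v}{n}$ disappear, yielding $b_h^n(q_h,\v)=-\lprod{q_h}{\nabla\cdot\v}{\On_{h,i}}$ and $\tripnrmast{\v}{n} = \nrm{\nabla\v}{\On_{h,i}}$. This produces $\nrm{q_h}{\On_{h,i}}\leqc \sup_{\v_h\in\Vnh} b_h^n(q_h,\v_h)/\tripnrmast{\v_h}{n}$, which inserted into the pressure splitting above, together with $\sqrt{a+b}\leq\sqrt{a}+\sqrt{b}$, delivers \eqref{eqn:bad-inf-sup}.

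The main obstacle is the first step: one has to verify that the telescoping ghost-penalty chain yields a constant independent of $h$ and of the mesh-interface cut position, and moreover that the fitted Taylor--Hood inf-sup constant $\tilde\beta$ on $\On_{h,i}$ does not degenerate as $n$ varies. Both issues are handled by the implicit resolution assumption on the background mesh underlying Assumption~\ref{assumption:StripWidth}, which keeps $\On_{h,i}$ a shape-regular macro-domain with uniformly Lipschitz boundary and so delivers a uniform fitted inf-sup constant; without such a resolution assumption $\tilde\beta$ could deteriorate and the chain argument would accumulate a factor depending on $L/h$.
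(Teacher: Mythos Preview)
Your plan is correct and is precisely the standard argument; the paper does not reproduce it but simply cites \cite[Corollary~1]{GO17}, which follows the same strategy you outline (interior fitted inf-sup combined with a ghost-penalty chain to the cut strip). One minor point: for the pressure ghost-penalty the relevant chain from a cut element in $\ThGhn$ to an uncut interior element has length $O(1)$ under the standing mesh-resolution assumption, not $L$ --- the constant $L\leqc 1+\delta_h/h$ in Assumption~\ref{assumption:StripWidth} pertains to the velocity extension strip, whereas the pressure stabilisation acts only on $\Fhn$.
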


\begin{proof}
	See \cite[Corollary 1]{GO17} for the proof.
\end{proof}

\begin{remark}[Choice of ghost-penalty parameter] \label{rem:choiceofgp} From
Lemma~\ref{lemma:continuity} and Lemma~\ref{lemma:coercivity} we see that the
velocity ghost-penalty parameter should scale with the strip-width $L$.  This
is necessary, in order for an exterior unphysical but active element to obtain
support from an uncut interior element for which we have to cross at most $L$
elements to reach it, c.f. Assumption~\ref{assumption:StripWidth}. As this
first part of the ghost-penalties is related to the stabilization of the
viscosity bilinear form $a_h^n(\cdot,\cdot)$ only it has a scaling with
$\nu$. We require the same mechanism also for the implicit extension of
functions. As we will see in the analysis below this requires the same
ghost-penalty, however with the different scaling $\nf{1}{\nu}$.

The pressure ghost-penalty operator in Lemma~\ref{lemma:bad-inf-sup} does not
need a scaling with $L$ as we require these ghost-penalties only for
stabilizing the velocity-pressure coupling, but not for the extension of the
pressure field into a $\delta$-neighbourhood.

For simplicity of the analysis, we choose a common ghost-penalty stabilisation
parameter $\gamma_s$, which we use to set $\gamma_{s,u} = \gamma_{s,u}' =
L\gamma_s$ and $\gamma_{s,p} = \gamma_s$.
\end{remark}

We now collect all fully implicit non-ghost-penalty terms in the bilinear form

\begin{equation*}
	A_h^n((\u_h^n,p_h^n),(\v_h,q_h)) \coloneqq
	\frac{1}{\dt}\lprod{\u_h^n}{\v_h}{\Onh} + a^n_h(\u_h^n,\v_h) + 
	b_h^n(p_h^n,\v_h)+ b_h^n(q_h,\u_h^n)
\end{equation*}
and the explicit terms in the linear form
\begin{equation*}
	F_h^n(\v_h) \coloneqq \frac{1}{\dt}\lprod{\u_h^{n-1}}{\v_h}{\Onh} + 
	\f^n_h(\v_h).
\end{equation*}
We can then rewrite problem \eqref{eqn:VariationalFormulationDiscrete} as:
Find $(\u_h^n,p_h^n)\in\Vnh\times\Qnh$, such that
\begin{equation}\label{eqn:MatrixProblem}
	A_h^n((\u_h^n,p_h^n),(\v_h,q_h)) + s_h^n((\u_h^n,p_h^n),(\v_h,q_h)) =
	F_h^n(\v_h^n)
\end{equation}
for all $(\v_h,q_h)\in\V_h^n\times Q_h^n$. This problem is indeed well-posed:

\begin{theorem}[Well posedness]\label{theorem:DiscreteWellPosedness}
Consider the norm $\tripnrmdt{(\u_h^n,p_h^n)}{n}^2 \coloneqq
\frac{1}{\dt}\nrm{\u_h^n}{\Onh}^2 + \tripnrmast{(\u_h^n,p_h^n)}{n}^2$. There
exists a constant $c_{T\ref{theorem:DiscreteWellPosedness}}>0$ such that
for all $(\u_h^n,p_h^n)\in\Vnh\times\Qnh$ it holds that
\begin{equation}\label{eqn:BigMatrix-inf-sup-result}
	\sup_{(\v_h,q_h)\in\Vnh\times\Qnh} \frac{A_h^n((\u_h^n,p_h^n),(\v_h,q_h)) +
	s_h^n((\u_h^n,p_h^n),(\v_h,q_h))}{\tripnrmdt{(\v_h,q_h)}{n}} \geq
	c_{T\ref{theorem:DiscreteWellPosedness}}\tripnrmdt{(\u_h^n,p_h^n)}{n}
\end{equation}
The solution $(\u_h^n,p_h^n)\in\Vnh\times\Qnh$ to \eqref{eqn:MatrixProblem}
exists and is unique.
\end{theorem}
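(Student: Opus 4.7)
The plan is to establish the discrete inf-sup condition \eqref{eqn:BigMatrix-inf-sup-result} by the classical two-step construction for stabilised Stokes systems: combine a diagonal test that exploits coercivity with a Fortin-type test that activates the pressure inf-sup. Once \eqref{eqn:BigMatrix-inf-sup-result} is in hand, uniqueness follows immediately from injectivity of the bilinear form in its first argument, and since $\Vnh\times\Qnh$ is finite dimensional and \eqref{eqn:MatrixProblem} is a square linear system, injectivity gives existence as well.

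First, I would test with $(\v_h,q_h) = (\u_h^n,-p_h^n)$. The coupling contributions $b_h^n(p_h^n,\u_h^n) + b_h^n(-p_h^n,\u_h^n)$ cancel, and coercivity (Lemma~\ref{lemma:coercivity}) combined with the strictly positive scaling $\gamma_{s,u}\nu+\gamma_{s,u}'/\nu$ of the velocity ghost-penalty in $s_h^n$ yields a lower bound of the form
\begin{equation*}
c\left(\frac{1}{\dt}\nrm{\u_h^n}{\Onh}^2 + \nu\,\tripnrmast{\u_h^n}{n}^2 + \frac{\gamma_{s,p}}{\nu}\,j_h^n(p_h^n,p_h^n)\right).
\end{equation*}
This controls all velocity contributions to $\tripnrmdt{(\u_h^n,p_h^n)}{n}^2$ together with the pressure ghost-penalty, but not the $\pazocalbf{L}^2$-part of the pressure.

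Second, I would invoke the modified inf-sup Lemma~\ref{lemma:bad-inf-sup}, after decomposing $p_h^n$ into a part with zero mean on the interior uncut domain $\On_{h,i}$ plus a constant (the latter being controllable from the ghost-penalty together with Lemma~\ref{lemma:TriNrmEquiv}). This supplies $\v_h^\ast\in\Vnh$ with $\tripnrmast{\v_h^\ast}{n}\leqc\nrm{p_h^n}{\Onh}$ and $b_h^n(p_h^n,\v_h^\ast) \geq \beta\nrm{p_h^n}{\Onh}^2 - \nrm{p_h^n}{\Onh}\,j_h^n(p_h^n,p_h^n)^{1/2}$. Testing additionally with $(\epsilon\v_h^\ast,0)$ and applying continuity (Lemma~\ref{lemma:continuity}) to the cross terms $\epsilon\,a_h^n(\u_h^n,\v_h^\ast)$ and $\epsilon\,i_h^n(\u_h^n,\v_h^\ast)$, plus Cauchy--Schwarz and Young to the time-derivative coupling $\frac{\epsilon}{\dt}\lprod{\u_h^n}{\v_h^\ast}{\Onh}$, I would absorb these into the coercive control from the first step for sufficiently small $\epsilon$. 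The norm equivalence $\tripnrmast{p_h^n}{n}^2 \simeq \nrm{p_h^n}{\Onh}^2 + j_h^n(p_h^n,p_h^n)$ from Lemma~\ref{lemma:TriNrmEquiv} then recovers the missing $\tripnrmast{p_h^n}{n}^2$ contribution.

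Finally, I would bound the combined test pair $(\u_h^n+\epsilon\v_h^\ast,-p_h^n)$ in the $\tripnrmdt{\cdot}{n}$-norm by $\tripnrmdt{(\u_h^n,p_h^n)}{n}$, using $\tripnrmast{\v_h^\ast}{n}\leqc\nrm{p_h^n}{\Onh}\leq\tripnrmast{p_h^n}{n}$ together with the discrete Poincar\'e estimate \eqref{eqn:Poincare:tripnrm}. The main obstacle will be the time-derivative cross term $\dt^{-1}\lprod{\u_h^n}{\v_h^\ast}{\Onh}$: the $\dt^{-1}$ scaling of the mass must be balanced against the inf-sup amplitude $\epsilon\beta\nrm{p_h^n}{\Onh}^2$, and the resulting constant $c_{T\ref{theorem:DiscreteWellPosedness}}$ may well degrade as $\dt\to 0$ through the discrete Poincar\'e constant $c_{P,h}$. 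This is, however, compatible with the theorem's claim, which only asserts strict positivity of $c_{T\ref{theorem:DiscreteWellPosedness}}$, and is therefore sufficient to conclude injectivity and hence unique solvability of \eqref{eqn:MatrixProblem}.
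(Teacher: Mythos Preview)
Your proposal is correct and follows essentially the same route as the paper: a diagonal test $(\u_h^n,-p_h^n)$ for coercivity combined with a Fortin correction obtained from Lemma~\ref{lemma:bad-inf-sup}, then absorption via Young's inequality. The only refinement the paper makes explicit is that the Fortin parameter must scale with the time step, i.e.\ one takes the combined test pair $(\u_h^n-\dt\delta\,\w_h,-p_h^n)$ with $\delta$ fixed; this is precisely what is forced by balancing the cross term $\dt^{-1}\lprod{\u_h^n}{\w_h}{\Onh}$ against $\dt^{-1}\nrm{\u_h^n}{\Onh}^2$, and it is what you correctly flag as the ``main obstacle'' leading to a $\dt$-dependent inf-sup constant. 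Your remark on decomposing $p_h^n$ into an interior mean-zero part plus a constant before invoking Lemma~\ref{lemma:bad-inf-sup} is in fact more careful than the paper, which applies the lemma directly.
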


\begin{proof}
The proof follows the same lines as for the unfitted Stokes problem using 
CutFEM in \cite{MLLR14,GO17}. Let $(\u^n_h,p^n_h)\in\Vnh\times\Qnh$ be given 
and let $\w_h\in\Vnh$ be the test function, such that \eqref{eqn:bad-inf-sup}
holds and w.l.o.g that $\tripnrmast{\w_h}{n}=\nrm{p^n_h}{\Onh}$. Testing
\eqref{eqn:MatrixProblem} with $(-\w_h,0)$ and using Cauchy-Schwarz,
Lemma~\ref{lemma:continuity}, Lemma~\ref{lemma:bad-inf-sup} and the weighted
Young's inequality then gives
\begin{align*}
	(A_h^n+s_h^n)((\u_h^n,p_h^n),(-\w_h,0)) 
	&= -\frac{1}{\dt}\lprod{\u_h^n}{\w_h}{\Onh}
		- \big(a_h^n+(\nu+\nf{1}{\nu})\big)(\u_h^n,\w_h)
		+ b_h^n(-p_h^n,\w_h)\\
	&\geq 
		\begin{multlined}[t]
			- \frac{1}{\dt}\nrm{\u^n_h}{\Onh}\nrm{\w_h}{\Onh}
			- c\tripnrmast{\u^n_h}{n}\tripnrmast{\w_h}{n}
			+ \beta\nrm{p^n_h}{\Onh}\tripnrmast{\w_h}{n}\\
			- j_h^n(p^n_h,p^n_h)^{\nf{1}{2}}
			\tripnrmast{\w_h}{n}
		\end{multlined}\\
	&\geq 
		\begin{multlined}[t]
			-\frac{1}{2\eps_1\dt^2}\nrm{\u^n_h}{\Onh}^2
			- \frac{c}{2\eps_2}\tripnrmast{\u_h^n}{n}^2
			- \frac{1}{2\eps_3}j_h^n(p_h^n,p_h^n)\\
			+ (\beta-\frac{\eps_1}{2}-\frac{c_1 \eps_2}{2}-\frac{\eps_3}{2})
				\nrm{p_h^n}{\Onh}^2
		\end{multlined}
\end{align*}
Choosing $\eps_1,\eps_2,\eps_3>0$ such that $(\beta - \nf{\eps_1}{2} -
\nf{c\eps_2}{2} - \nf{\eps_3}{2})>0$ we then have
\begin{equation*}
	(A_h^n+s_h^n)((\u_h^n,p_h^n),(-\w_h,0)) \geq
		-\frac{c_1}{\dt^2}\nrm{\u_h^n}{\Onh}
		- c_2\tripnrmast{\u^n_h}{n}^2 
		+ c_3 \nrm{p^n_h}{\Onh}^2 - c_4 j_h^n(p^n_h,p^n_h).
\end{equation*}
Furthermore, with the test function $(\v_h,q_h)=(\u^n_h,-p^n_h)$ in
\eqref{eqn:MatrixProblem} and using Lemma~\ref{lemma:coercivity} we have
\begin{equation*}
	(A_h^n+s_h^n)((\u_h^n,p_h^n),(\u^n_h,-p^n_h)) \geq
		\frac{1}{\dt}\nrm{\u_h^n}{\Onh}^2
		+ c_{L\ref{lemma:coercivity}}\tripnrmast{\u^n_h}{n}^2 
		+ j_h^n(p^n_h,p^n_h).
\end{equation*}
Combining these two estimates, we have with a suitable choice of $\dt\delta>0$
that
\begin{equation*}
	(A_h^n+s_h^n)((\u_h^n,p_h^n),(\u^n_h-\dt\delta\w_h,-p^n_h)) \geq
	c\tripnrmdt{(\u^n_h,p^n_h)}{n}^2.
\end{equation*}
Since $\tripnrmdt{(\u^n_h-\dt\delta\w_h,-p^n_h)}{n} \leq (1+\dt\delta)
\tripnrmdt{(\u^n_h,p^n_h)}{n}$, we then have
\eqref{eqn:BigMatrix-inf-sup-result}.

The continuity of $(A^n_h+s_h^n)$ in the $\tripnrmdt{(\cdot,\cdot)}{n}$-norm
can be easily shown. The existence and uniqueness of the solution then follows
by the Banach-Nec\vs as-Babu\vs ska Theorem (see e.g.
\cite[Theorem~2.6]{EG04}).
\end{proof}

\subsection{Stability}\label{sec:analysis::subsec:stability}

For the fully discrete method, we have a discrete counterpart of
Lemma~\ref{lemma:StabilitySpaceContVel}.

\begin{theorem}\label{theorem:StabilityDiscrete}
For the velocity solution $\u_h^n\in\Vnh$, $n=1,\dots,N$ of
\eqref{eqn:VariationalFormulationDiscrete} we have the stability estimate
\begin{multline*}
	\nrm{\u_h^k}{\O_n^k}^2 + \dt \sum_{n=1}^k  \Big[ \frac{\nu
		c_{T\ref{theorem:StabilityDiscrete}a}}{2}
		\tripnrmast{\u_h^n}{n}^2 + \frac{L}{\nu}i_h^n(\u_h^n,\u_h^n) \Big] \\
	\leq \exp(c_{T\ref{theorem:StabilityDiscrete}b}\nu^{-1}t_k)\left[
		\nrm{\u_h^0}{\O_n^0}^2 + \frac{\nu\dt
		c_{L\ref{theorem:StabilityDiscrete}a}}{2}\tripnrmast{\u_h^0}{0}^2
		+ \frac{L}{\nu}i_h^0(\u_h^0,\u_h^0)
		+ \dt \sum_{n=1}^k\frac{c_{P,h}^2}{\nu 
			c_{T\ref{theorem:StabilityDiscrete}a}} \nrm{\f_h^n}{\On_h}^2
		\right].
\end{multline*}	
\end{theorem}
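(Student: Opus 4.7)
The plan is to emulate the argument of Lemma~\ref{lemma:StabilitySpaceContVel} at the discrete level, now with the additional care required by the domain mismatch between consecutive time steps. First I would test \eqref{eqn:MatrixProblem} with $(\v_h,q_h) = (\u_h^n,-p_h^n) \in \Vnh \times \Qnh$: this cancels the velocity--pressure coupling since $b_h^n(p_h^n,\u_h^n) + b_h^n(-p_h^n,\u_h^n) = 0$, turns the pressure ghost-penalty into the nonnegative contribution $\gamma_{s,p}\nu^{-1} j_h^n(p_h^n,p_h^n)$, and keeps both velocity ghost-penalties on the left with positive sign. Applying Lemma~\ref{lemma:coercivity} to $a_h^n(\u_h^n,\u_h^n) + \gamma_{s,u}\nu L\, i_h^n(\u_h^n,\u_h^n)$ (with $\gamma_{s,u}=L\gamma_s$, as in Remark~\ref{rem:choiceofgp}) produces the dissipative $\nu c_{L\ref{lemma:coercivity}}\tripnrmast{\u_h^n}{n}^2$, while the second velocity ghost-penalty $\gamma_{s,u}'\nu^{-1} i_h^n = L\gamma_s\nu^{-1} i_h^n$ is set aside to play the role of the $L\nu^{-1} i_h^n$ term in the statement.

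For the time derivative I would use the polarisation identity
\[
(\u_h^n-\u_h^{n-1},\u_h^n)_{\Onh} = \tfrac{1}{2}\bigl(\nrm{\u_h^n}{\Onh}^2 + \nrm{\u_h^n-\u_h^{n-1}}{\Onh}^2 - \nrm{\u_h^{n-1}}{\Onh}^2\bigr),
\]
drop the nonnegative middle term and move the last one to the right. The forcing is controlled by Cauchy--Schwarz, the discrete Poincar\'{e} inequality \eqref{eqn:Poincare:tripnrm} and Young's inequality, so that half of the $\tripnrmast{\u_h^n}{n}^2$ is absorbed on the left and the prefactor $c_{P,h}^2/(\nu c_{T\ref{theorem:StabilityDiscrete}a})$ in front of $\nrm{\f_h^n}{\Onh}^2$ is recovered.

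The main obstacle, and the place where the unfitted setting truly matters, is the mismatch between $\nrm{\u_h^{n-1}}{\Onh}^2$ produced by the polarisation and the telescoping quantity $\nrm{\u_h^{n-1}}{\O_h^{n-1}}^2$. Here the implicit extension provided by the ghost-penalty is exploited: the choice of $\delta_h$ together with \eqref{eqn:DiscreteDomainInclusions} guarantees $\Onh \subset \OdT{n-1}$, so Lemma~\ref{lemma:TriNrmEquiv}, estimate \eqref{eqn:VelL2NormEqiv}, yields
\[
\nrm{\u_h^{n-1}}{\Onh}^2 \leq \nrm{\u_h^{n-1}}{\OdT{n-1}}^2 \leqc \nrm{\u_h^{n-1}}{\O_h^{n-1}}^2 + h^2 L\, i_h^{n-1}(\u_h^{n-1},\u_h^{n-1}).
\]
The spurious ghost-penalty term on the right-hand side is absorbed into the $\dt\gamma_{s,u}'\nu^{-1} i_h^{n-1}$ contribution kept on the left-hand side at the previous time step, provided $\gamma_s$ is chosen sufficiently large for the prevailing regime of $\dt$, $h$ and $\nu$. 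This is precisely why the $\nu^{-1}$-scaled velocity ghost-penalty had to be built into $s_h^n$ in the first place.

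Finally I would multiply through by $\dt$, sum from $n=1$ to $k$ so that the leading $L^2$-norms telescope, and apply a discrete Gronwall lemma to the residual coupling term of the form $\dt \sum_{n=1}^{k-1} c\,\nu^{-1}\nrm{\u_h^n}{\O_h^n}^2$ generated by the constant greater than one hidden in $\leqc$ in \eqref{eqn:VelL2NormEqiv}. This produces the exponential factor $\exp(c_{T\ref{theorem:StabilityDiscrete}b}\nu^{-1} t_k)$ in front of the initial data and the accumulated forcing, completing the bound. The delicate step throughout is the absorption of the extension residual described above; every constant in the final estimate can be tracked back to the coercivity constant of Lemma~\ref{lemma:coercivity}, the norm-equivalence constants of Lemma~\ref{lemma:TriNrmEquiv}, and the choice of $\gamma_s$.
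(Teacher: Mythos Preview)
Your overall plan (testing with $(\u_h^n,-p_h^n)$, polarisation, coercivity, Poincar\'e--Young for the forcing, telescoping plus Gronwall) matches the paper. The gap is in the single critical step where you pass from $\nrm{\u_h^{n-1}}{\Onh}^2$ to quantities on $\O_h^{n-1}$.

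You invoke \eqref{eqn:VelL2NormEqiv} and write
\[
\nrm{\u_h^{n-1}}{\Onh}^2 \leq \nrm{\u_h^{n-1}}{\OdT{n-1}}^2 \leqc \nrm{\u_h^{n-1}}{\O_h^{n-1}}^2 + h^2 L\, i_h^{n-1}(\u_h^{n-1},\u_h^{n-1}),
\]
and then assert that the hidden constant is of the form $1+c\dt\nu^{-1}$, so that Gronwall applies. It is not. The constant in the norm equivalence of Lemma~\ref{lemma:TriNrmEquiv} is an $O(1)$ shape-regularity/overlap constant (essentially $1+M$ from Assumption~\ref{assumption:StripWidth}), independent of $\dt$. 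The residual you would feed into Gronwall is therefore $(C-1)\nrm{\u_h^{n-1}}{\O_h^{n-1}}^2$ with $C-1\sim 1$, and after $k=T/\dt$ steps this yields a factor $\exp(cT/\dt)$, not $\exp(c\nu^{-1}t_k)$. For the same reason, absorbing the $h^2L\,i_h^{n-1}$ remainder into the dissipative $\dt L\nu^{-1} i_h^{n-1}$ term forces the restriction $\nu h^2\lesssim\dt$, which is not part of the statement.

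What the paper does instead is use the sharper strip estimate \eqref{eqn:estimate-on-old-domain} (from \cite[Lemma~5.7]{LO19}):
\[
\nrm{\u_h^{n-1}}{\Odh{\O_h^{n-1}}}^2 \leq (1+c_1(\eps)\dt)\nrm{\u_h^{n-1}}{\O_h^{n-1}}^2 + c_2(\eps)\nu\dt\nrm{\nabla\u_h^{n-1}}{\O_h^{n-1}}^2 + c_3(\eps,h)\dt L\, i_h^{n-1}(\u_h^{n-1},\u_h^{n-1}).
\]
The essential difference is the explicit $\dt$ factor in every extra term, which comes from the fact that the extension strip has width $\delta_h\sim\dt$ and hence measure $\sim\dt$. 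With a suitable choice of $\eps$ the gradient term is absorbed into $\tfrac{\nu c_{L\ref{lemma:coercivity}}}{2}\tripnrmast{\u_h^{n-1}}{n-1}^2$, the ghost-penalty remainder into $\dt L\nu^{-1} i_h^{n-1}$ under the mild condition that $h$ is small enough, and the $(1+c_1\dt)$ prefactor is exactly what the discrete Gronwall lemma needs. Replace your use of \eqref{eqn:VelL2NormEqiv} by \eqref{eqn:estimate-on-old-domain} and the rest of your outline goes through.
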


\begin{proof}
Testing \eqref{eqn:VariationalFormulationDiscrete} with $(\v_h,q_h) =
2\dt(\u_h^n,-p_h^n)\in\Vnh\times\Qnh$ and using the identity
\begin{equation}\label{eqn:TimeDiffIdentity}
	2\lprod{\u^n-\Ex\u^{n-1}}{\u^n}{\On} = \nrm{\u^n}{\On}^2 +
	\nrm{\u^n-\Ex\u^{n-1}}{\On}^2 - \nrm{\Ex\u^{n-1}}{\On}^2
\end{equation}
gives us
\begin{multline*}
	\nrm{\u_h^n}{\Onh}^2 + \nrm{\u_h^n-\u_h^{n-1}}{\Onh}^2 + 2\dt[
		a_h^n(\u_h^n,\u_h^n) + \nu\gamma_s i_h^n(\u_h^n,\u_h^n)
		+\frac{\gamma_s}{\nu}i_h^n(\u_h^n,\v_h^n)
		+\frac{\gamma_s}{\nu}j_h^n(p_h^n,p_h^n) ]\\
	= 2\dt\f_h^n(\u_h^n) + \nrm{\u_h^{n-1}}{\Onh}^2.
\end{multline*}
Using the coercivity result from Lemma~\ref{lemma:coercivity} and the fact that
$\nrm{\u_h^n-\u_h^{n-1}}{\Onh}^2 + \frac{2\dt\gamma_s}{\nu} j_h^n(p_h^n,p_h^n)
\geq 0$, we get
\begin{equation}\label{eqn:StabilityProof1}
	\nrm{\u_h^n}{\Onh}^2 
		+ 2\dt\nu c_{L\ref{lemma:coercivity}}\tripnrmast{\u_h^n}{n}^2
		+ \dt \frac{2}{\nu}L i_h^n(\u_h^n,\u_h^n)
	 \leq 2\dt\f_h^n(\u_h^n) + \nrm{\u_h^{n-1}}{\Onh}^2
\end{equation}
To bound the forcing term we use the Cauchy-Schwarz inequality, the discrete 
Poincaré-inequality \eqref{eqn:Poincare:tripnrm} as well as the weighted
Young's inequality which gives
\begin{equation*}
	\f_h^n(\u_h^n)\leq\nrm{\f_h^n}{\On_h}\nrm{\u_h^n}{\On_h} \leq	
	\nrm{\f_h^n}{\On_h}c_{P,h}\tripnrmast{\u_h^n}{n}\leq
	\frac{c_{P,h}^2\nrm{\f_h^n}{\On_h}^2}{2\eps}
	+ \frac{\eps\tripnrmast{\u_h^n}{n}^2}{2}.
\end{equation*}
Inserting this into \eqref{eqn:StabilityProof1} with the choice $\eps=\nu
c_{L\ref{lemma:coercivity}}$ then gives
\begin{equation}\label{eqn:StabilityProof2}
	\nrm{\u_h^n}{\Onh}^2 
		+ \dt\nu c_{L\ref{lemma:coercivity}}\tripnrmast{\u_h^n}{n}^2  
		+ \dt \frac{2}{\nu}L i_h^n(\u_h^n,\u_h^n)
	\leq \frac{c_{P,h}^2\dt}{\nu c_{L\ref{lemma:coercivity}}}
		\nrm{\f_h^n}{\On_h}^2
		+ \nrm{\u_h^{n-1}}{\Onh}^2
\end{equation}
To obtain a bound on $\nrm{\u_h^{n-1}}{\Onh}^2$ we utilise the following
result, c.f., \cite[Lemma~5.7]{LO19}:
\begin{equation}\label{eqn:estimate-on-old-domain}
	\nrm{\u_h}{\Odh{\On_h}}^2 \leq (1+c_1(\eps)\dt)\nrm{\u_h}{\On_h}^2 +
	c_2(\eps)\nu\dt\nrm{\nabla\u_h}{\On_h}^2 + c_3(\eps,h)\dt L i_h^n(\u_h,\u_h)
\end{equation}
with $c_1(\eps)=c'c_{\delta_h}w^n_\infty(1+\eps^{-1})$,
$c_2(\eps)=c'c_{\delta_h}w^n_\infty\eps/\nu$ and
$c_3(\eps)=c'c_{\delta_h}w^n_\infty(\eps + (1+\eps^{-1})h^2)$ and $c'>0$
independent of $h$ and $\dt$. Choosing $\eps=\nu c_{L\ref{lemma:coercivity}}/
(2c'c_{\delta_h}w^n_\infty)$ we then have $c_2=c_{L\ref{lemma:coercivity}}/2$
while we can bound the resulting $c_1(\eps)\leq \overline c/nu$, with
$\overline c$ independent of $\dt$ and $h$ and $c_3 \leq \nu
c_{L\ref{lemma:coercivity}} /2 + h^2 \overline c/\nu$. This gives the estimate
\begin{align*}
	\nrm{\u_h^{n-1}}{\Onh}^2 
		&\leq \nrm{\u_h^{n-1}}{\Odh{\O_h^{n-1}}}^2 \\
		&\leq (1 + \frac{\overline{c}\dt}{\nu})\nrm{\u_h^{n-1}}{\O_h^{n-1}}^2
		+ \frac{\nu\dt c_{L\ref{lemma:coercivity}}}{2}\tripnrmast{\u^{n-1}}{n-1}^2
		+ \frac{\overline c h^2}{\nu}\dt L i_h^{n-1}(\u_h^{n-1},\u_h^{n-1})
\end{align*}
Inserting this into \eqref{eqn:StabilityProof2} and summing over $n=1,\dots,k$
for $k\leq N$ then gives for sufficiently small $h$, such that $\overline c h^2
\leq 1$
\begin{multline*}
	\nrm{\u_h^k}{\O_n^k}^2 
		+ \dt\sum_{n=1}^k\Big[\frac{\nu c_{L\ref{lemma:coercivity}}}{2}
			\tripnrmast{\u_h^n}{n}^2 + \frac{L}{\nu}i_h^n(\u_h^n,\u_h^n) \Big]\\ 
	\leq \nrm{\u_h^0}{\O_n^0}^2 
		+ \frac{\nu\dt c_{L\ref{lemma:coercivity}}}{2}\tripnrmast{\u_h^0}{0}^2
		+ \frac{L}{\nu}i_h^0(\u_h^0,\u_h^0)
		+ \dt \sum_{n=1}^k \frac{c_{P,h}^2}{\nu c_{L\ref{lemma:coercivity}}}
			\nrm{\f_h^n}{\On_h}^2 
		+ \dt\sum_{n=0}^k \frac{\overline c}{\nu}\nrm{\u_h^n}{\Onh}^2.
\end{multline*}
Applying a discrete Gronwall inequality with $
c_{T\ref{theorem:StabilityDiscrete}a} = c_{L\ref{lemma:coercivity}}$ and $
c_{T\ref{theorem:StabilityDiscrete}b} = \overline c$ then gives the desired
result.
\end{proof}

\begin{lemma}\label{lemma:discrete-pressure:stability}

For the pressure solution $p_h^n\in\Qnh$ of
\eqref{eqn:VariationalFormulationDiscrete} it holds that
\begin{equation}\label{eqn:bad-pressure-stability}
	\nrm{p_h^n}{\Onh} \leq c_{L\ref{lemma:discrete-pressure:stability}}
	\left[ \nrm{\frac{1}{\dt}(\u_h^n - \u_h^{n-1})}{-1,n} +
	\tripnrmast{\u_h^n}{n} + \nrm{\f_h^n}{\Onh} + j_h^n(p_h^n,p_h^n)^{\nf{1}{2}}
	\right].
\end{equation}

\end{lemma}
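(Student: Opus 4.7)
The approach is to mirror the proof of Lemma~\ref{lemma:StabilitySpaceContPre}, replacing the continuous inf-sup by the discrete (modified) one from Lemma~\ref{lemma:bad-inf-sup} and eliminating $b_h^n(p_h^n,\v_h)$ using the discrete momentum equation that follows from \eqref{eqn:VariationalFormulationDiscrete} when testing with $q_h = 0$.

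A small complication is that Lemma~\ref{lemma:bad-inf-sup} requires the pressure to be mean-free on the interior uncut subdomain $\On_{h,i}$ rather than on $\On_h$, which is the natural constraint inherited from the continuous setting. I would therefore first perform a constant shift: set $c := |\On_{h,i}|^{-1}\int_{\On_{h,i}} p_h^n$ and $\tilde p := p_h^n - c$, so that $\tilde p|_{\On_{h,i}} \in \Ltwozero{\On_{h,i}}$. Since $c$ is constant on the whole active mesh, the divergence theorem applied to the boundary term in $b_h^n$ gives $b_h^n(c,\v_h) = 0$ for every $\v_h \in \Vnh$, and facet jumps of constants vanish, so $j_h^n(c,\cdot) = 0$. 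Therefore $b_h^n(\tilde p,\v_h) = b_h^n(p_h^n,\v_h)$ and $j_h^n(\tilde p,\tilde p) = j_h^n(p_h^n,p_h^n)$, and applying Lemma~\ref{lemma:bad-inf-sup} to $\tilde p$ gives
\begin{equation*}
\beta \nrm{\tilde p}{\On_h} \leq \sup_{\v_h\in\Vnh} \frac{b_h^n(p_h^n,\v_h)}{\tripnrmast{\v_h}{n}} + j_h^n(p_h^n,p_h^n)^{\nf{1}{2}}.
\end{equation*}

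Next I would use the discrete momentum equation to rewrite $b_h^n(p_h^n,\v_h)$ as the force term minus the discrete time derivative, the viscous term, and the two velocity ghost-penalty contributions. Each is estimated in the obvious way: the forcing by Cauchy-Schwarz together with the discrete Poincaré inequality \eqref{eqn:Poincare:tripnrm}; the time-derivative term by the very definition of $\nrm{\cdot}{-1,n}$; the diffusion together with the $\nu L$-scaled ghost-penalty by the continuity estimate \eqref{eqn:DiffusionGPContinuity}; and the remaining $\nu^{-1}L$-scaled ghost-penalty by a Cauchy-Schwarz on $i_h^n(\cdot,\cdot)$ combined with \eqref{eqn:VelNormEqiv} applied to both arguments, which yields a bound of the form $\lesssim \nu^{-1}\tripnrmast{\u_h^n}{n}\tripnrmast{\v_h}{n}$. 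Dividing by $\tripnrmast{\v_h}{n}$, taking the supremum, and hiding $\nu$, $c_{P,h}$, $\gamma_s$ and $\beta^{-1}$ into the constant produces the desired bound on $\nrm{\tilde p}{\On_h}$.

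Finally I would return from $\tilde p$ to $p_h^n$ via a triangle inequality. Using $p_h^n \in \Ltwozero{\On_h}$ one sees that $c\,|\On_h| = -\int_{\On_h\setminus\On_{h,i}} \tilde p$, and since the cut strip $\On_h\setminus\On_{h,i}$ has measure $\lesssim h$ one obtains $\nrm{c}{\On_h} \lesssim h^{\nf{1}{2}} \nrm{\tilde p}{\On_h}$. For sufficiently small $h$ this term can be absorbed, giving $\nrm{p_h^n}{\On_h} \lesssim \nrm{\tilde p}{\On_h}$ and hence \eqref{eqn:bad-pressure-stability}. The main technical obstacle is really only this mean-value bookkeeping and keeping the $\nu$- and $L$-dependencies of the two velocity ghost-penalty terms consistent; the rest is a direct transcription of the continuous pressure stability argument.
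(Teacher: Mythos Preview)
Your proposal is correct and follows essentially the same route as the paper: test the discrete scheme with $q_h=0$ to isolate $b_h^n(p_h^n,\v_h)$, bound the resulting terms using the definition of $\nrm{\cdot}{-1,n}$, the continuity estimate \eqref{eqn:DiffusionGPContinuity}, Cauchy--Schwarz on $i_h^n$ with \eqref{eqn:VelNormEqiv}, and the discrete Poincar\'e inequality, and then invoke Lemma~\ref{lemma:bad-inf-sup}. The paper's proof is exactly this, but it applies Lemma~\ref{lemma:bad-inf-sup} directly to $p_h^n$ without commenting on the mean-value hypothesis $\restr{q_h}{\On_{h,i}}\in\Ltwozero{\On_{h,i}}$; your constant-shift and cut-strip measure argument makes this step explicit and is a welcome clarification rather than a deviation.
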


\begin{proof}
We have that
\begin{align*}
	b_h(p_h^n,\v_h) &= -\frac{1}{\dt}\lprod{\u_h^n -
		\u_h^{n-1}}{\v_h}{\Onh} - (a_h^n+\nu L i_h^n)(\u_h^n,\v_h) 
		- \nf{1}{\nu}i_h^n(\u_h^n,\v_h) + \f_h^n(\v_h)\\
	&\leq \left[\nrm{\frac{1}{\dt}(\u_h^n - \u_h^{n-1})}{-1,n} +
		c(\nu + \nf{1}{\nu})\tripnrmast{\u_h^n}{n}
		+ c_{P,h}\nrm{\f_h^n}{\Onh} \right] \tripnrmast{\v_h}{n}
\end{align*}
Here we used the continuity of $a_h^n + \nu L i_h^n$, the estimate
$i_h^n(\w,\v) \leq i_h^n(\w,\w)^{\nf{1}{2}}i_h^n(\v,\v)^{\nf{1}{2}}$ with
\eqref{eqn:VelNormEqiv} and the Poincaré inequality. The result then follows
from the inf-sup result in Lemma~\ref{lemma:bad-inf-sup}.
\end{proof}

\begin{remark}
Lemma~\ref{lemma:discrete-pressure:stability} does not immediately result 
in a satisfactoery $\pazocal{L}^2$-stability estimate for the pressure. 
\eqref{eqn:bad-pressure-stability} only gives us an estimate for
\begin{equation*}
	\dt^2\sum_{n=1}^k\nrm{p_h^n}{\Onh}^2.
\end{equation*}
In order to get a proper estimate of the form $\dt \sum_{n=1}^k
\nrm{p_h^n}{\Onh}^2$ we require an estimate of the term $\frac{1}{\dt}
\nrm{\u_h^n-\u_h^{n-1}}{-1,n}$ which is independent (of negative powers) of
$\dt$.
\end{remark}

\subsection{Geometrical Approximation}
\label{sec:analysis::subsec:geometrical-approximation}

We shall assume that we have a higher order approximation of the geometry,
i.e.,
\begin{equation*}
	\dist(\On,\Onh) \leq h^{q+1}
\end{equation*}
with the geometry approximation order $q$ and that integrals on $\Onh$ can be
computed sufficiently accurately. We further assume the existence of a mapping
which maps the approximated extended domain to the exact extended domain. In
other words, we have $\Phi:\Odh{\Onh}\rightarrow\Odh{\On}$ which we assume to
be well-defined, continuous, it holds that $\On=\Phi(\Onh)$,
$\G^n=\Phi(\G^n_h)$ and $\Odh{\On}=\Phi(\Odh{\Onh})$ and
\begin{equation}\label{eqn:MappingEstimates}
	\begin{gathered}
		\nrm{\Phi - \Id }{\Linf{\Odh{\Onh}}} \leqc h^{q+1},\\
		\nrm{D\Phi - I}{\Linf{\Odh{\Onh}}}\leqc h^q,\quad
		\nrm{\det(D\Phi)-1}{\Linf{\Odh{\Onh}}}\leqc h^q.
	\end{gathered}
\end{equation}
Furthermore, for sufficiently small $h$, the mapping $\Phi$ is invertible. Such
a mapping has been constructed, e.g., in \cite[Section~7.1]{GOR15}. This
mapping $\Phi$ is used here (as in \cite{LO19}, and therefore using the same
notation) to map from the discrete domain to the exact one. Let $\v_h\in\Vnh$
and define $\vhl=\v_h\circ\Phi^{-1}$. From the third estimate in
\eqref{eqn:MappingEstimates} we have $\det(D\phi)\simeq 1$, hence we then get
using integration by substitution
\begin{equation*}
	\nrm{\vhl}{\Odh{\On}}^2 = \sum_{i=1}^{d}\int_{\Odh{\On}}(\vhl)_i^2\dif\hat\x
	= \sum_{i=1}^{d}\int_{\Odh{\Onh}}\det(D\Phi)(\v_h)_i^2\dif\x\simeq
	\nrm{\v_h}{\Odh{\Onh}}^2.
\end{equation*}
Using the same argument we also have that
\begin{equation*}
	\nrm{\v_h}{\Onh}^2 \simeq  \nrm{\vhl}{\On}^2
\end{equation*}
as well as 
\begin{equation*}
	\nrm{\nabla\v_h}{\Odh{\Onh}}^2 \simeq \nrm{\nabla\vhl}{\Odh{\On}}^2\qquad
	\text{and}\qquad \nrm{\nabla\v_h}{\Onh}^2\simeq  \nrm{\nabla\vhl}{\On}^2.
\end{equation*}
For the extension, we also have the following result, c.f.
\cite[Lemma~7.3]{GOR15}:
\begin{lemma}\label{lemma:MappingEstimates}
The following estimates
\begin{subequations}
	\begin{align}
		\nrm{\Ex\u - \u\circ\Phi}{\Onh} &\leqc h^{q+1}
			\nrm{\u}{\H{1}{}{\On}},\label{eqn:MappingEstimaLemma:L2Volume}\\
		\nrm{\nabla(\Ex\u) - (\nabla\u)\circ\Phi}{\Onh} &\leqc
			h^{q+1}\nrm{\u}{\pazocalbf{H}^2(\On)},
			\label{eqn:MappingEstimaLemma:GradL2Volume}\\
		\nrm{\Ex\u - \u\circ\Phi}{\Gamma_h^n} &\leqc h^{q+1}
		\nrm{\u}{\H{2}{}{\On}},\label{eqn:MappingEstimaLemma:L2Boundary}
	\end{align}
\end{subequations}
hold for all $\u\in\H{2}{}{\On}$, $n=1,\dots,N$. Furthermore, it also holds
that
\begin{equation}
	\nrm{\Ex\u - (\Ex\u)\circ\Phi}{\Odh{\Onh}} \leqc h^{q+1}
		\nrm{\u}{\H{1}{}{\On}},\label{eqn:MappingEstimaLemma:ExtendedL2Volume}.
\end{equation}
\end{lemma}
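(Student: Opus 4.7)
The plan is to prove all four estimates by the same underlying mechanism: a first-order Taylor expansion along the straight-line path from $\x$ to $\Phi(\x)$, combined with the pointwise bound $\nrm{\Phi - \Id}{\Linf{\Odh{\Onh}}} \leqc h^{q+1}$ from \eqref{eqn:MappingEstimates} and the Sobolev continuity of the extension operator $\Ex$ from Assumption~\ref{assumption:ExtensionProperties}. Concretely, for any sufficiently regular function $f$ defined on a neighbourhood of the segment connecting $\x$ and $\Phi(\x)$, we have
\begin{equation*}
f(\Phi(\x)) - f(\x) = \int_0^1 \nabla f\bigl(\x + t(\Phi(\x)-\x)\bigr) \cdot (\Phi(\x) - \x)\,\dif t,
\end{equation*}
so by Cauchy--Schwarz, Fubini, and the uniform bound on $\Phi - \Id$,
\begin{equation*}
\nrm{f - f\circ\Phi}{\Onh}^2 \leqc h^{2(q+1)}\,\nrm{\nabla f}{\Od{\On}}^2.
\end{equation*}
For this step we use that for sufficiently small $h$, the convex hull of $\{\x,\Phi(\x)\}$ remains inside $\Od{\On}$ for every $\x\in\Odh{\Onh}$, which is guaranteed by the assumption $\Odh{\Onh}\subset\Od{\On}$ together with $\nrm{\Phi-\Id}{\infty}\leqc h^{q+1}\ll \delta$.

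For estimate~\eqref{eqn:MappingEstimaLemma:L2Volume}, I exploit that $\Phi(\Onh)=\On$, so for $\x\in\Onh$ the point $\Phi(\x)$ lies in $\On$ and there $\u = \Ex\u$, hence $\u\circ\Phi = (\Ex\u)\circ\Phi$ on $\Onh$. Applying the master estimate with $f = \Ex\u$ and then \eqref{eqn:GradExtentionExtimate} yields $\nrm{\Ex\u - \u\circ\Phi}{\Onh} \leqc h^{q+1}\nrm{\nabla\u}{\On}$. Estimate~\eqref{eqn:MappingEstimaLemma:ExtendedL2Volume} is even more direct: on the whole extended domain $\Odh{\Onh}$ we only have $\Ex\u$ on both sides, so the master estimate with $f=\Ex\u$ gives the claim via \eqref{eqn:GradExtentionExtimate}. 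For~\eqref{eqn:MappingEstimaLemma:GradL2Volume} I replay the argument on each component of $\nabla(\Ex\u)$: writing $(\nabla\u)\circ\Phi = \nabla(\Ex\u)\circ\Phi$ on $\Onh$ (using that $\u=\Ex\u$ on $\On$), I apply the Taylor identity to $f = \partial_i(\Ex\u)$, which produces a bound in terms of $\nrm{\nabla^2(\Ex\u)}{\Od{\On}}$, and this is controlled by $\nrm{\u}{\H{2}{}{\On}}$ through \eqref{eqn:ExtentionExtimate} with $k=2$.

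The main obstacle is the boundary estimate~\eqref{eqn:MappingEstimaLemma:L2Boundary}, which cannot be obtained by a pure $L^2$ volume argument. My plan here is to rewrite the integrand using the Taylor formula with $f = \Ex\u$ and bound
\begin{equation*}
|\Ex\u(\x) - \u(\Phi(\x))|^2 \leqc h^{2(q+1)}\int_0^1 |\nabla(\Ex\u)(\x + t(\Phi(\x)-\x))|^2 \dif t,
\end{equation*}
integrate over $\Gamma^n_h$, and then apply a trace inequality to a tubular neighbourhood of $\Gamma^n_h$ of width $\sim h^{q+1}$ together with the $\H{2}{}{\Od{\On}}$-bound on $\Ex\u$ from \eqref{eqn:ExtentionExtimate}. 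Alternatively, one can parametrise $\Gamma^n_h$ over $\Gamma^n$ via $\Phi$ and directly use the Sobolev trace theorem on $\Od{\On}$, which converts the right-hand side into $h^{2(q+1)}\nrm{\nabla(\Ex\u)}{\H{1}{}{\Od{\On}}}^2 \leqc h^{2(q+1)}\nrm{\u}{\H{2}{}{\On}}^2$, yielding the claim. Throughout, I will invoke \cite[Lemma~7.3]{GOR15} as the template, noting that it applies verbatim once the geometric hypotheses \eqref{eqn:MappingEstimates} and the extension bounds are in place.
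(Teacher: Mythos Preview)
Your proposal is correct and matches the paper's approach: the paper simply refers to \cite[Lemma~7.3]{GOR15} for \eqref{eqn:MappingEstimaLemma:L2Volume}--\eqref{eqn:MappingEstimaLemma:L2Boundary} and notes that \eqref{eqn:MappingEstimaLemma:ExtendedL2Volume} follows by the identical Taylor-along-the-segment argument applied on the larger set $\Spm(\Onh)$, which is exactly the mechanism you spell out. The one point worth tightening is the boundary estimate: your first variant (trace into a tube of width $\sim h^{q+1}$) would produce a bad constant, so you should commit to the second variant---after the Taylor step and Fubini you need a \emph{uniform} trace bound $\nrm{\nabla(\Ex\u)}{\Psi_t(\Gamma^n_h)}\leqc\nrm{\Ex\u}{\H{2}{}{\Od{\On}}}$ over the family of nearby surfaces $\Psi_t(\Gamma^n_h)$, obtained by pulling back to the fixed smooth surface $\Gamma^n$ via the near-identity diffeomorphisms $\Psi_t\circ\Phi^{-1}$.
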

\begin{proof}
For \eqref{eqn:MappingEstimaLemma:L2Volume} -- 
\eqref{eqn:MappingEstimaLemma:L2Boundary} we refer to \cite[Lemma~7.3]{GOR15}. 
The proof of \eqref{eqn:MappingEstimaLemma:ExtendedL2Volume} follows the 
identical lines of \eqref{eqn:MappingEstimaLemma:L2Volume} but integration over
$\Spm(\Onh)$ rather that $\Spm(\Onh)\setminus\Sp(\Onh)$.

\end{proof}

\begin{lemma}\label{lemma:MappingEstimate2}
For all $\v\in\H{m+1}{}{\On}$ and $r\in\pazocal{H}^m(\On)$ we have the
estimates
\begin{subequations}
	\begin{align}
		h^{\nf{1}{2}}\nrm{\Ex\partial_{\n}\v - \partial_{\n}\v\circ\Phi}{\Ghn} 
			&\leqc h^q\nrm{v}{\H{1}{}{\On}} + h^{m+\nf{1}{2}}
			\nrm{\v}{\H{m+1}{}{\On}}\label{eqn:MappingEstLemma2:VelNormDeriv}\\
		\nrm{\Ex\v\cdot\n - (\v\cdot\n)\circ\Phi}{\Ghn} &\leqc h^{q+\nf{1}{2}}
			\nrm{\v}{\H{1}{}{\On}} + h^{m+\nf{1}{2}}\nrm{\v}{\H{m+1}{}{\On}}
			\label{eqn:MappingEstLemma2:VelNorm}\\
		h^{\nf{1}{2}}\nrm{\Ex r-r\circ\Phi}{\Ghn} &\leqc h^q
			\nrm{r}{\pazocal{H}^1(\On)} + h^m\nrm{r}{\pazocal{H}^m(\On)}
			\label{eqn:MappingEstLemma2:Pres}.
	\end{align}
\end{subequations}
\end{lemma}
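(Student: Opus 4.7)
The overall plan is to reduce each of the three boundary estimates on $\Ghn$ to volume estimates on the cut mesh via a scaled trace inequality, and then combine the geometric bounds from Lemma \ref{lemma:MappingEstimates} with interpolation estimates to produce the two terms (one scaling with $h^q$ from the geometry approximation and one with $h^m$ from the polynomial degree of the ansatz spaces). Throughout, I use the elementwise trace inequality
\[
	\nrm{f}{K\cap\Ghn}^2 \leqc h^{-1}\nrm{f}{K}^2 + h\nrm{\nabla f}{K}^2
	\qquad K\in\ThGhn,
\]
whose $h^{-1/2}$ prefactor is exactly what is balanced by the $h^{1/2}$ factor on the left-hand side of \eqref{eqn:MappingEstLemma2:VelNormDeriv} and \eqref{eqn:MappingEstLemma2:Pres}.

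For estimate \eqref{eqn:MappingEstLemma2:VelNormDeriv} on the normal derivative, I split using the discrete normal $\n_h$ on $\Ghn$:
\[
	\nabla(\Ex\v)\cdot\n_h - (\nabla\v)\circ\Phi\cdot(\n\circ\Phi)
	= \bigl[\nabla(\Ex\v) - (\nabla\v)\circ\Phi\bigr]\cdot\n_h
	+ (\nabla\v)\circ\Phi\cdot\bigl[\n_h - \n\circ\Phi\bigr].
\]
The first summand is bounded on $\Ghn$ through the trace inequality combined with \eqref{eqn:MappingEstimaLemma:GradL2Volume}, where I take $\nabla\v$ in $H^m$ to yield the $h^{m+1/2}\nrm{\v}{\H{m+1}{}{\On}}$ contribution. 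The second summand uses the standard geometric estimate $\nrm{\n_h - \n\circ\Phi}{\Linf{\Ghn}} \leqc h^q$ (a direct consequence of \eqref{eqn:MappingEstimates}), which combined with a trace inequality on $\nabla\v$ gives the $h^q\nrm{\v}{\H{1}{}{\On}}$ term. Estimate \eqref{eqn:MappingEstLemma2:VelNorm} for $\v\cdot\n$ proceeds identically, but without the leading $h^{1/2}$ and with one less derivative on $\v$, which explains the $h^{q+1/2}$ power.

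For the pressure estimate \eqref{eqn:MappingEstLemma2:Pres} there is no normal vector to contend with, but $r$ only has $H^m$ regularity, so I cannot directly invoke Lemma \ref{lemma:MappingEstimates} which required $H^2$. Instead, I insert the Lagrange interpolant $\Int\Ex r\in\Qnh$ and write
\[
	\Ex r - r\circ\Phi
	= (\Ex r - \Int\Ex r) + (\Int\Ex r - (\Int\Ex r)\circ\Phi)
	+ \bigl((\Int\Ex r - \Ex r)\circ\Phi\bigr).
\]
The first and third terms are interpolation errors; after the trace inequality and an application of Lemma \ref{lemma:InterpolationEstimate} together with \eqref{eqn:ExtentionExtimate}, they produce the $h^m\nrm{r}{\pazocal{H}^m(\On)}$ contribution. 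The middle term is a \emph{polynomial} evaluated at two close points: a first-order Taylor expansion combined with \eqref{eqn:MappingEstimates} gives $|(\Int\Ex r)(\x) - (\Int\Ex r)(\Phi(\x))| \leqc h^{q+1}|\nabla \Int\Ex r(\x)|$ in $\Linf$, and applying the trace inequality plus an inverse estimate on the polynomial $\nabla\Int\Ex r$ yields the $h^q\nrm{r}{\pazocal{H}^1(\On)}$ contribution.

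The main obstacle is book-keeping the regularity: one must be careful to use only $\H{1}{}{\On}$ on the factors that multiply $h^q$ (so that the geometric mapping error is applied to something of optimal low regularity via Taylor or via Lemma \ref{lemma:MappingEstimates}), while pushing all higher-regularity demands into the interpolation term scaling as $h^m$. A secondary technical point is that the trace inequality, when applied to $\nabla(\Ex\v) - (\nabla\v)\circ\Phi$, in principle demands control of its gradient (hence second derivatives of $\v$); it is here that the interpolation-insertion trick from the pressure estimate must again be used on the velocity case to avoid paying an extra derivative, giving the final balance of $h^q\nrm{\v}{\H{1}{}{\On}}$ plus $h^{m+1/2}\nrm{\v}{\H{m+1}{}{\On}}$.
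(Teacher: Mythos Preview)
Your argument for the pressure estimate \eqref{eqn:MappingEstLemma2:Pres} is essentially the paper's own proof: insert the Lagrange interpolant $\Int\Ex r$, split into three terms, bound the two interpolation errors with the trace inequality \eqref{eqn:TraceIneqBnd} plus Lemma~\ref{lemma:InterpolationEstimate}, and bound the middle polynomial-difference term via an inverse estimate together with the volume mapping bound \eqref{eqn:MappingEstimaLemma:ExtendedL2Volume}.

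For the normal-derivative estimate \eqref{eqn:MappingEstLemma2:VelNormDeriv} your route is different, and part of it does not go through as stated. The paper does \emph{not} split off the normal vector; it treats $\partial_{\n}\v$ as a single scalar field and applies to it exactly the same three-term interpolation insertion you used for the pressure. Your decomposition into $[\nabla(\Ex\v)-(\nabla\v)\circ\Phi]\cdot\n_h$ and $(\nabla\v)\circ\Phi\cdot[\n_h-\n\circ\Phi]$ is an unnecessary detour, and your claim that the first summand yields the $h^{m+\nf{1}{2}}\nrm{\v}{\H{m+1}{}{\On}}$ contribution via \eqref{eqn:MappingEstimaLemma:GradL2Volume} is incorrect: that estimate is a geometric bound scaling as $h^{q+1}\nrm{\v}{\H{2}{}{\On}}$, not $h^m$. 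You acknowledge in your final paragraph that the trace inequality on this summand demands control of second derivatives and that the interpolation-insertion trick is needed after all; but once you invoke that trick, the normal-vector split is redundant, since interpolating $\partial_{\n}\v$ directly already absorbs both the gradient difference and any normal discrepancy in a single stroke. The paper's direct approach is therefore cleaner and avoids the separate $\n_h-\n\circ\Phi$ bookkeeping; your second-term argument also requires a trace of $\nabla\v$ and hence $\H{2}{}{\On}$ rather than the $\H{1}{}{\On}$ you claim, though this is harmless since $m\geq 1$.
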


\begin{proof}
To make the proof more readable, we do not write the extension operator
explicitly and identify $\v$ with its smooth extension $\Ex\v$.

Now let $\v\in\H{m+1}{}{\On}$. To prove
\eqref{eqn:MappingEstLemma2:VelNormDeriv} we
insert additive zeros and use the triangle inequality to get
\begin{multline}\label{eqn:h^1/2partial_n_u_est}
	h^{\nf{1}{2}}\nrm{ \partial_{\n}\v\circ\Phi-\partial_{\n}\v}{\Ghn} \leq
		h^{\nf{1}{2}}\left[ \nrm{ \partial_{\n}\v\circ\Phi - 
		\Int\partial_{\n}\v\circ\Phi}{\Ghn}\right. \\
	+ \left.\nrm{ \Int\partial_{\n}\v\circ\Phi-\Int\partial_{\n}\v}{\Ghn} + 
		\nrm{\Int\partial_{\n}\v-\partial_{\n}\v}{\Ghn}\right].
\end{multline}
To bound the first term on the right-hand side of
\eqref{eqn:h^1/2partial_n_u_est} we divide the norm up into the separate
contributions of cut elements, use \eqref{eqn:TraceIneqBnd}, the interpolation
estimate in Lemma~\ref{lemma:InterpolationEstimate} as well as
Assumption~\ref{assumption:ExtensionProperties} in combination with
\eqref{eqn:DiscreteDomainInclusions} in order to get
\begin{align*}
	\nrm{ \partial_{\n}\v\circ\Phi-\Int\partial_{\n}\v\circ\Phi}{\Ghn} 
		&= \nrm{ \partial_{\n}\v-\Int\partial_{\n}\v}{\G^n}\\
	&=\sum_{K\in\ThGn}
		\nrm{\partial_{\n}\v-\Int\partial_{\n}\v}{K\cap\G^n}\\
	&\leqc \sum_{K\in\ThGn} h^{-\nf{1}{2}}\nrm{\nabla\v-\Int\nabla\v}{K} 
		+ h^{\nf{1}{2}}\nrm{\nabla(\nabla\v - \Int\nabla\v ) }{K} \\
	&\leqc \sum_{K\in\ThGn} h^{-\nf{1}{2}}h^m\nrm{\v}{\H{m+1}{}{K}} + 
		h^{\nf{1}{2}}h^{m-1}\nrm{\v}{\H{m+1}{ }{K}}\\
	&\leqc h^{m+\nf{1}{2}}\nrm{\v}{\H{m+1}{}{\OdT{n}}} \leqc 
		h^{m+\nf{1}{2}}\nrm{\v}{\H{m+1}{}{\On}}.
		\numberthis\label{eqn:h^1/2partial_n_u_est-part1}
\end{align*}
The third term in \eqref{eqn:h^1/2partial_n_u_est} can be estimated completely
analogously. For the second term we make use of the fact, that the argument of
the norm is discrete, therefore permitting the use of the inverse estimate
\eqref{eqn:InvEst:DerivEl}. Then with \eqref{eqn:TraceEstNormDerivBnd} and
\eqref{eqn:DiscreteDomainInclusions} it follows that
\begin{align*}
	\nrm{ \Int\partial_{\n}\v\circ\Phi-\Int\partial_{\n}\v}{\Ghn} &=
		\sum_{K\in\ThGhn} \nrm{\Int\partial_{\n}\v\circ\Phi-
		\Int\partial_{\n}\v}{K\cap\Ghn}\\
	&\leqc \sum_{K\in\ThGhn} h^{-\nf{1}{2}}\nrm{\nabla(\Int\v\circ\Phi-
		\Int\v)}{K}\\
	&\leqc \sum_{K\in\ThGhn} h^{-\nf{3}{2}}\nrm{\Int(\v\circ\Phi-\v)}{K}\\
	&\leqc h^{-\nf{3}{2}}\nrm{\v\circ\Phi-\v}{\Odh{\Onh}}
\end{align*}
We can then apply \eqref{eqn:MappingEstimaLemma:ExtendedL2Volume} so that we
get
\begin{align}
	\nrm{ \Int\partial_{\n}\v\circ\Phi-\Int\partial_{\n}\v}{\Ghn} &\leqc
		h^{-\nf{3}{2}}\nrm{\v\circ\Phi-\v}{\Odh{\Onh}}\nonumber\\
	& \leqc h^{q-\nf{1}{2}}\nrm{\v}{\H{1}{}{\On}}.
		\numberthis\label{eqn:h^1/2partial_n_u_est-part2}
\end{align}
Combining \eqref{eqn:h^1/2partial_n_u_est-part1} and
\eqref{eqn:h^1/2partial_n_u_est-part2} then gives us the desired estimate
\begin{equation*}
	h^{\nf{1}{2}}\nrm{ \partial_{\n}\v\circ\Phi-\partial_{\n}\v}{\Ghn} \leqc
	h^{m+\nf{1}{2}}\nrm{\v}{\H{m+1}{}{\On}} + h^q\nrm{\v}{\H{1}{}{\On}}
\end{equation*}

For \eqref{eqn:MappingEstLemma2:Pres}, let $r\in\pazocal{H}^m(\On)$. We again
introduce two additive zeros as in \eqref{eqn:h^1/2partial_n_u_est} and apply
the triangle inequality
\begin{equation*}
	\nrm{r-r\circ\Phi}{\Ghn} \leq \nrm{r-\Int r}{\Ghn} + \nrm{\Int r-\Int
	(r\circ\Phi)}{\Ghn} + \nrm{\Int (r\circ\Phi)-r\circ\Phi}{\Ghn}.
\end{equation*}
The first term of which we can then be estimated using
\eqref{eqn:TraceIneqBnd}, Lemma~\ref{lemma:InterpolationEstimate} and
Assumption~\ref{assumption:ExtensionProperties}
\begin{align*}
	\nrm{r-\Int r}{\Ghn}^2 &= \sum_{K\in\ThGhn}\nrm{r-\Int r}{K\cap\Ghn}\\
	&\leqc \sum_{K\in\ThGhn}h^{-1}\nrm{r-\Int r}{K}^2 +
		h^{1}\nrm{\nabla(r-\Int r)}{K}^2\\
	&\leqc h^{-1}\nrm{r-\Int r}{\Odh{\Onh}}^2 + h^1\nrm{\nabla(r-
		\Int r)}{\Odh{\Onh}}\\
	&\leqc h^{2m-1}\nrm{r}{\pazocal{H}^m(\Odh{\Onh})}^2 +
		h^{2(m-1)+1}\nrm{r}{\pazocal{H}^m(\Odh{\Onh})}^2\\
	&\leqc h^{2m-1}\nrm{r}{\pazocal{H}^m(\On)}.
\end{align*}
The term $\nrm{\Int (r\circ\Phi)-r\circ\Phi}{\Ghn}$ can then be bound along the
same lines. For the term $ \nrm{\Int r-\Int (r\circ\Phi)}{\Ghn}$ we make use of
the fact that the argument of the norm is a discrete function and therefore
permitting the inverse estimate \eqref{eqn:InvEst:DerivEl}. Then with
\eqref{eqn:MappingEstimaLemma:ExtendedL2Volume} we get
\begin{align*}
	\nrm{\Int r - \Int r\circ\Phi}{\Ghn}^2 &= \sum_{K\in\ThGhn}\nrm{\Int
		r-\Int r\circ\Phi}{K\cap\Ghn} \\
	&\leqc \sum_{K\in\ThGhn}h^{-1}\nrm{\Int r-\Int r\circ\Phi}{K}^2 +
		h^{1}\nrm{\nabla(\Int r-\Int r\circ\Phi)}{K}^2\\
	&\leqc \sum_{K\in\ThGhn}h^{-1}\nrm{\Int r-\Int r\circ\Phi}{K}^2\\
	&\leqc \nrm{r-r\circ\Phi}{\Odh{\Onh}}^2\leqc h^{2q-1}
		\nrm{r}{\pazocal{H}^1(\On)}.
\end{align*}
This then gives the desired estimate
\begin{equation*}
	h^{\nf{1}{2}}\nrm{r-r\circ\Phi}{\Ghn} \leqc h^q\nrm{r}{\pazocal{H}^1(\On)} 
	+ h^m\nrm{r}{\pazocal{H}^m(\On)}.
\end{equation*}

The proof of \eqref{eqn:MappingEstLemma2:VelNorm} follows along the same lines
as the other two estimates.

\end{proof}

\subsection{Consistency}\label{sec:analysis::subsec:consitency}

Using integration by parts, we see that any smooth solution $(\u,p)$ to the
strong problem \eqref{eqn:StrongProblem}, fulfils
\begin{equation}\label{eqn:SmoothSolutionVariational}
	\int_{\Onh}\partial_t\u(t_n)\vhl\dif\hat\x + a_1^n(\u(t_n),\vhl) +
	b_1^n(p(t_n),\vhl) + b_1^n(\qhl,\u(t_n)) = \f^n(\vhl)
\end{equation}
for the test-functions $(\vhl,\qhl) = (\v_h\circ\Phi^{-1},q_h\circ\Phi^{-1})$
with $(\v_h,q_h)\in\V_h^n\times Q_h^n $, the mapping $\Phi$ from
Section~\ref{sec:analysis::subsec:geometrical-approximation} and the bilinear
forms
\begin{equation*}
	a_1^n(\u,\v) \coloneqq a^n(\u,\v) + \nu\int_{\G^n}(-\partial_{\n}\u)\cdot\v
	\dif s
\end{equation*}
and
\begin{equation*}
	b_1^n(p,\v) \coloneqq b^n(p,\v) + \int_{\G^n}p\v\cdot\n\dif s.
\end{equation*}
Since $\Onh \subset \Od{\O(t)}$, $t\in[t_{n-1},t_n]$, $\u(t_{n-1})=\u^{n-1}$ is
well defined on $\Onh$. For simplicity of notation we shall identify the smooth
extension $\Ex\u$ with $\u$ and denote $\Eu{n}\coloneqq\u^n - \u^n_h$ and
$\Ep{n} \coloneqq p^n - p^n_h$. Subtracting
\eqref{eqn:VariationalFormulationDiscrete} from
\eqref{eqn:SmoothSolutionVariational} and adding and subtracting appropriate
terms we obtain the error equation
\begin{align*}
	\int_{\Onh}&\frac{\Eu{n}-\Eu{n-1}}{\dt}\cdot\v_h\dif\x + a_h^n(\Eu{n},\v_h)
		+ b_h^n(\Ep{n},\v_h) + b_h^n(q_h,\Eu{n}) + 
		s_h^n((\Eu{n},\Ep{n}),(\v_h,q_h)) \\
	&= \!\begin{multlined}[t]
			\f^n(\vhl) - \f^n_h(\v_h) + \int_{\Onh}\frac{\u^n-\u^{n-1}}{\dt}
				\cdot\v_h\dif\x - \int_{\On}\partial_t\u^n\cdot\vhl\dif\hat\x + 
				a_h^n(\u^n,\v_h) - a_1^n(\u^n,\vhl)\\
			b_h^n(p^n,\v_h) - b_1^n(p^n,\vhl) + b_h^n(q_h,\u^n) - 
				b_1^n(\qhl,\u^n) + s_h^n((\u^n,p^n),(\v_h,q_h))
		\end{multlined}\\
	&= \mathfrak{T}_1 + \mathfrak{T}_2 + \mathfrak{T}_3 + \mathfrak{T}_4 + 
		\mathfrak{T}_5 + \mathfrak{T}_6\\
	&= \mathfrak{E}_c^n(\v_h,q_h).\numberthis\label{eqn:ErrorEquation}
\end{align*}
These six terms correspond to the forcing, time-derivative, diffusion,
pressure, divergence constraint and ghost-penalty contributions respectively.

\begin{lemma}[Consistency estimate]\label{lemma:consistency}
The consistency error has the bound
\begin{equation*}
	\vert  \mathfrak{E}_c^n((\v_h,q_h)) \vert \leqc \left(\dt + h^q 
		+ \frac{h^m L^{\nf{1}{2}}}{\nu}\right) 
			R_{c,1}(\u,p,\f)\tripnrmast{\v_h}{n} 
		+ (h^q + h^m)R_{c,2}(\u,p)
	\tripnrmast{q_h}{n}
\end{equation*}
with 
\begin{equation*}
	R_{c,1}(\u,p,\f)= \nrm{\u}{\W{2,\infty}{\Q}}
		+ \nrm{\f^n}{\H{1}{}{\On}} +
		\sup_{t\in[0,T]}\big(\nrm{\u(t)}{\H{m+1}{}{\O(t)}} +
		\nrm{p(t)}{\pazocal{H}^m(\O(t))}\big)
\end{equation*}
and
\begin{equation*}
	R_{c,2}(\u,p) = \sup_{t\in[0,T]}\big(\nrm{\u(t)}{\H{m+1}{}{\O(t)}} +
		\nrm{p(t)}{\pazocal{H}^m(\O(t))}\big).
\end{equation*}
\end{lemma}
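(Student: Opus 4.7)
I would bound each of the six contributions $\mathfrak{T}_1,\ldots,\mathfrak{T}_6$ in \eqref{eqn:ErrorEquation} separately and sum. For the first five terms, the recurring mechanism is to insert pulled-back quantities ($\u\circ\Phi$, $p\circ\Phi$, or $\partial_{\n}\u\circ\Phi$) as intermediates and invoke Lemma~\ref{lemma:MappingEstimates} and Lemma~\ref{lemma:MappingEstimate2} to convert integrals over $\Onh,\Ghn$ into integrals over $\On,\G^n$; the norm equivalences \eqref{eqn:TripNormEst} and the discrete Poincar\'e inequality \eqref{eqn:Poincare:tripnrm} then absorb each remainder into $\tripnrmast{\v_h}{n}$ or $\tripnrmast{q_h}{n}$. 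For $\mathfrak{T}_1$ (forcing), the substitution $y=\Phi(x)$ reduces the discrepancy to an integrand mismatch bounded via \eqref{eqn:MappingEstimaLemma:L2Volume} plus a Jacobian mismatch of size $h^q$, yielding an $h^q\nrm{\f^n}{\H{1}{}{\On}}\tripnrmast{\v_h}{n}$ contribution. For $\mathfrak{T}_2$ (time-derivative), I insert $\pm\int_{\Onh}\partial_t\u(t_n)\cdot\v_h$: the first resulting difference is a Taylor-in-time remainder of size $\dt\nrm{\u}{\W{2,\infty}{\Q}}$, where \eqref{eqn:W2ExtentionExtimate} gives meaning to the extended $\u(t_{n-1})$ on $\Onh$, while the second is a pure geometric error controlled by Lemma~\ref{lemma:MappingEstimates}.

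For the bilinear forms $\mathfrak{T}_3$ (diffusion), $\mathfrak{T}_4$ (pressure on $\v_h$) and $\mathfrak{T}_5$ (divergence on $q_h$), I decompose each into its volume and boundary pieces. The volume discrepancies follow from \eqref{eqn:MappingEstimaLemma:GradL2Volume} together with \eqref{eqn:MappingEstimates}. The Nitsche consistency term in $\mathfrak{T}_3$ is matched by \eqref{eqn:MappingEstLemma2:VelNormDeriv}. For the Nitsche symmetry and penalty terms, I exploit $(\u\circ\Phi)\vert_{\Ghn}=\u\vert_{\G^n}=0$ to rewrite $\u$ on $\Ghn$ as $\u-\u\circ\Phi$ for free, then combine \eqref{eqn:MappingEstimaLemma:L2Boundary} with the $h^{-\nf{1}{2}}$-trace control of $\v_h$ built into $\tripnrmast{\cdot}{n}$; the symmetry term additionally uses the $h^{\nf{1}{2}}$-control of $\partial_{\n}\v_h$ encoded in $\tripnrm{\cdot}_n$ together with \eqref{eqn:TripNormEst}. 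The boundary pressure trace in $\mathfrak{T}_4$ is controlled via \eqref{eqn:MappingEstLemma2:Pres}, and for $\mathfrak{T}_5$ the identity $(\u\cdot\n)\vert_{\G^n}=0$ triggers \eqref{eqn:MappingEstLemma2:VelNorm}. Each of these contributions scales as $h^q+h^m$ multiplied by the appropriate Sobolev norm of $\u$ or $p$.

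For $\mathfrak{T}_6$, I apply Cauchy--Schwarz to $i_h^n$ and $j_h^n$, use Lemma~\ref{lemma:GP-consitency} on the smooth factors to obtain $i_h^n(\u^n,\u^n)^{\nf{1}{2}}\leqc h^m\nrm{\u}{\H{m+1}{}{\OdT{n}}}$ and $j_h^n(p^n,p^n)^{\nf{1}{2}}\leqc h^m\nrm{p}{\pazocal{H}^m(\OO^n_{\T})}$, and use Lemma~\ref{lemma:TriNrmEquiv} to bound the discrete factors by $i_h^n(\v_h,\v_h)^{\nf{1}{2}}\leqc L^{-\nf{1}{2}}\tripnrmast{\v_h}{n}$ and $j_h^n(q_h,q_h)^{\nf{1}{2}}\leqc\tripnrmast{q_h}{n}$. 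With the parameter choice of Remark~\ref{rem:choiceofgp}, the $\nf{1}{\nu}$-scaled velocity ghost-penalty is the dominant contribution and produces the characteristic $\frac{h^m L^{\nf{1}{2}}}{\nu}\nrm{\u}{\H{m+1}{}{\On}}\tripnrmast{\v_h}{n}$ factor, while the pressure ghost-penalty gives $h^m\nrm{p}{\pazocal{H}^m(\On)}\tripnrmast{q_h}{n}$. Grouping all bounds by $\tripnrmast{\v_h}{n}$ and $\tripnrmast{q_h}{n}$ and passing to suprema in time yields the claimed estimate with $R_{c,1}$ and $R_{c,2}$. The main obstacle I anticipate is the careful bookkeeping of the Nitsche boundary terms in $\mathfrak{T}_3$--$\mathfrak{T}_5$: tracking which quantity vanishes on $\G^n$ versus $\Ghn$, inserting the right pulled-back zero, and ensuring that after combining trace inequalities with the $h^{-1}$ Nitsche weights and the $L$-dependent ghost-penalty scaling the sharp exponents $h^q,h^m$ and the precise prefactor $\frac{L^{\nf{1}{2}}}{\nu}$ emerge rather than a weaker power.
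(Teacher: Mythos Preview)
Your proof plan is correct and follows essentially the same route as the paper: term-by-term treatment of $\mathfrak{T}_1,\ldots,\mathfrak{T}_6$, with the change of variables via $\Phi$ and Lemmas~\ref{lemma:MappingEstimates}--\ref{lemma:MappingEstimate2} handling the geometric discrepancies, the homogeneous boundary data $\u\vert_{\G^n}=0$ supplying the free pulled-back zero for the Nitsche symmetry and penalty terms, and Cauchy--Schwarz plus Lemma~\ref{lemma:GP-consitency} for the ghost-penalty contribution. The only cosmetic difference is that the paper bounds $\nrm{h^{\nf{1}{2}}\partial_{\n}\v_h}{\Ghn}$ directly via the trace estimate \eqref{eqn:TraceEstNormDerivBnd} rather than routing through \eqref{eqn:TripNormEst}, but this is equivalent.
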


\begin{proof}
We estimate each of the five components of the consistency error separately.

$\mathfrak{T}_1$: See also \cite[Lemma~7.5]{GOR14}, the preprint of
\cite{GOR15}. Beginning with the forcing contribution $\mathfrak{T}_1$ and
denoting $J=\det(D\Phi)$, we use integration by substitution to obtain
\begin{align*}
	\vert\mathfrak{T}_1\vert = \vert\f^n(\vhl) - \f^n_h(\v_h) \vert &= 
		\Big\vert\int_{\On}\f^n\cdot\vhl\dif\hat\x - \int_{\Onh}\f^n\cdot\v_h
		\dif\x \Big\vert = \Big\vert \int_{\Onh} (J\cdot\f^n\circ\Phi - 
		\f^n)\cdot\v_h\dif\x \Big\vert\\
	&\leq \Big\vert \int_{\Onh}J(\f^n\circ\Phi - \f^n)\cdot\v_h\dif\x\Big\vert 
		+ \Big\vert\int_{\Onh}(J-1)\f^n\cdot\v_h\dif\x\Big\vert \\
	&\leqc \nrm{\f^n\circ\Phi -\f^n}{\Onh}\nrm{\v_h}{\Onh} + 
		\nrm{J-1}{\infty}\nrm{\f^n}{\Onh}\nrm{\v_h}{\Onh}\\
	&\leqc h^{q+1}\nrm{\f^n}{\H{1}{ }{\On}}\nrm{\v_h}{\Onh} + 
		h^q\nrm{\f^n}{\Onh}\nrm{\v_h}{\Onh}\\
	&\leqc h^q \nrm{\f^n}{\H{1}{ }{\Odh{\Onh}}}\nrm{\v_h}{\Onh}\\
	&\leqc h^q \nrm{\f^n}{\H{1}{}{\On}}\nrm{\v_h}{\Onh}
		\numberthis\label{eqn:consistencyT1}.
\end{align*}
Here we used \eqref{eqn:MappingEstimates}, Lemma~\ref{lemma:MappingEstimates}
and Assumption~\ref{assumption:ExtensionProperties}.

$\mathfrak{T}_2$: The proof for the time-derivative component follows the proof
of \cite[Lemma~5.11]{LO19}. Using integration by parts in time, we have that
\begin{equation*}
	\frac{1}{\dt}\left[ \u(t_n) - \u(t_{n-1}) \right] = \u_t(t_n)
	-\int_{t_{n-1}}^{t_n}\frac{t-t_{n-1}}{\dt}\u_{tt}\dif t.
\end{equation*}
Therefore, we may rewrite $\mathfrak{T}_2$ as
\begin{equation}\label{eqn:RewriteConsistencyT2}
	\mathfrak{T}_2 = \int_{\Onh}\u_t(t_n)\cdot\v_h\dif\x -
	\int_{\On}\u_t(t_n)\cdot\vhl\dif\hat\x - \int_{\Onh}\int_{t_{n-1}}^{t_n} 
	\frac{t-t_{n-1}}{\dt}\u_{tt}\dif t\cdot\v_h\dif\x.
\end{equation}
For the first two terms of \eqref{eqn:RewriteConsistencyT2} we have
\begin{align*}
	\int_{\Onh} \u_t(t_n)\cdot\v_h \dif\x - \int_{\On} \u_t(t_n)\cdot\vhl
		\dif\hat\x &= \int_{\Onh} \u_t(t_n)\cdot\v_h \dif\x - \int_{\Phi(\Onh)}
		\u_t(t_n)\cdot(\v_h \circ\Phi^{-1}) \dif\hat\x\\
	&= \int_{\Onh} \u_t(t_n)\cdot\v_h \dif\x - \int_{\Onh}
		(\u_t\circ\Phi)(t_n)\cdot\v_h \det(D\Phi) \dif\x\\
	&= \int_{\Onh} \left[\u_t(t_n) - (\u_t\circ\Phi)(t_n) J\right]\cdot\v_h 
		\dif\x\eqqcolon \mathfrak{I}_2^1.
\end{align*}
We estimate this by
\begin{align*}
	\vert \mathfrak{I}_2^1 \vert &\leq \Big\vert  \int_{\Onh}\left[\u_t(t_n) -
		(\u_t\circ\Phi)(t_n)\right]J\cdot\v_h\dif\x\Big\vert + \Big\vert 
		\int_{\Onh}\u_t (1-J)\cdot\v_h\dif\x\Big\vert\\
	&\leqc \nrm{\nabla\u_t}{\Linf{\Odh{\Onh}}}\nrm{\Id-\Phi}{\Onh}
		\nrm{\v_h}{\Onh} + \nrm{J-1}{\Linf{\Odh{\Onh}}}\nrm{\u_t(t_n)}{\Onh}
		\nrm{\v_h}{\Onh}\\
	&\leqc h^{q+1}\nrm{\nabla\u_t}{\Linf{\Odh{\Onh}}}\nrm{\v_h}{\Onh} +
		h^q\nrm{\u_t(t_n)}{\Onh}\nrm{\v_h}{\Onh}\\
	&\leqc h^q\nrm{\u}{\pazocalbf{W}^{2,\infty}(\Q)}\nrm{\v_h}{\Onh}
\end{align*}
where we used \eqref{eqn:MappingEstimates} and
\begin{equation*}
	\Vert\u_t(\x,t_n)-(\u_t\circ\Phi)(\x,t_n)\Vert \leq
	\nrm{\nabla\u_t}{\Linf{\Odh{\Onh}}}\Vert \x - \Phi(\x) \Vert.
\end{equation*}
For the third and final term in \eqref{eqn:RewriteConsistencyT2} we have
\begin{align*}
	\Big\vert \int_{\Onh}\int_{t_{n-1}}^{t_n}\frac{t-t_{n-1}}{\dt}\u_{tt}\dif
		t\cdot\v_h\dif\x \Big\vert &\leq \int_{\Onh}
		\max_{s\in[t_{n-1},t_n]}\u_{tt}(s)\int_{t_{n-1}}^{t_n}
		\frac{t-t_{n-1}}{\dt}\dt\v_h\dif\x\\
	&\leq \frac{1}{2}\dt \nrm{\u_{tt}}{\Linf{\Odh{\Q}}}
		\nrm{\v_h}{\pazocalbf{L}^1(\Onh)}\\
	&\leqc \dt\nrm{\u}{\pazocalbf{W}^{2,\infty}(\Q)}\nrm{\v_h}{\Onh}.
\end{align*}
This then gives the bound
\begin{equation}\label{eqn:consistencyT2}
	\vert \mathfrak{T}_2 \vert \leqc (\dt + h^q)
	\nrm{\u}{\pazocalbf{W}^{2,\infty}(\Q)}\nrm{\v_h}{\Onh}.
\end{equation}

$\mathfrak{T}_3$: For the volume integral part in the diffusion consistency
error term $\mathfrak{T}_3$ we also refer to \cite[Lemma~7.4]{GOR15}. We
observe that it follows from the chain rule that
\begin{equation}\label{eqn:chain-rule--grad-vhl}
	\nabla\vhl(\hat\x) = \nabla(\v_h\circ\Phi^{-1})(\hat\x) = \nabla\v_h(\x)
	D\Phi(\x)^{-1}\qquad\text{for }\x\in\On,\;\x\coloneqq \Phi^{-1}(\hat\x).
\end{equation}
With this we can rewrite the volume contribution as
\begin{multline*}
	\int_{\Onh}\nu\nabla\u^n:\nabla\v_h\dif\x-\int_{\On}\nu\nabla\u^n
		:\nabla\vhl\dif\hat\x \\
	\begin{aligned}
		&=\int_{\Onh}\nu\nabla\u^n:\nabla\v_h\dif\x-\int_{\On}\nu
			\nabla\u^n:(\nabla\v_h)\circ\Phi^{-1} D\Phi^{-1}\dif\hat\x\\
		&=\int_{\Onh}\nu\nabla\u^n:\nabla\v_h\dif\x-\int_{\Onh}\nu
			\nabla\u^n\circ\Phi:J\nabla\v_h D\Phi^{-1}\dif\x \eqqcolon 
			\mathfrak{I}_3^1.
	\end{aligned}
\end{multline*}
First we observe that due to \eqref{eqn:MappingEstimates} we have
\begin{equation}\label{eqn:MappinEstimate2}
	\nrm{I-JD\Phi^{-1}}{\Linf{\Odh{\Onh}}} \leq
	\nrm{(I-IJ)D\Phi^{-1}}{\Linf{\Odh{\Onh}}} +
	\nrm{I-D\Phi^{-1}}{\Linf{\Odh{\Onh}}} \leqc h^q.
\end{equation}
With this and Lemma~\ref{lemma:MappingEstimates} we can estimate
$\vert\mathfrak{I}_3^1\vert$ as
\begin{align*}
	\vert\mathfrak{I}_3^1\vert &\leq \Big\vert \int_{\Onh}
		\nu(\nabla\u^n-\nabla\u^n\circ\Phi):J\nabla\v_h D\Phi^{-1}\dif\x 
		\Big\vert + \Big\vert \int_{\Onh}\nu\nabla\u^n:\nabla\v_h\cdot
		(I-JD\Phi^{-1})\dif\x\Big\vert\\
	&\leqc h^{q+1}\nrm{\u^n}{\H{2}{}{\Onh}}\nrm{\nabla\v_h}{\Onh} + h^q
		\nrm{\u^n}{\H{1}{}{\Onh}}\nrm{\nabla\v_h}{\Onh}\\
	&\leqc h^q \nrm{\u^n}{\pazocalbf{W}^{2,\infty}(\Q)}\nrm{\nabla\v_h}{\Onh}.
\end{align*}
We split the boundary contributions from $\mathfrak{T}_3=
a_h^n(\u^n,\v_h)-a_1^n (\u^n,\vhl)$ into three parts
\begin{equation*}
	\underbrace{-\int_{\Ghn}\nu\partial_{\n}\u^n\cdot\v_h\dif s +
		\int_{\G^n}\nu\partial_{\n}\u^n\cdot\vhl\dif\hat s}_{\mathfrak{I}_3^2}~
	\underbrace{-\int_{\Ghn}\nu\partial_{\n}\v_h\cdot\u^n\dif
		s}_{\mathfrak{I}_3^3}~
	\underbrace{+ \frac{\sigma}{h}\int_{\Ghn}\nu\u^n\cdot\v_h
		\dif s}_{\mathfrak{I}_3^4}.
\end{equation*}
For $\mathfrak{I}_3^2$ we use integration by substitution, the third
bound in \eqref{eqn:MappingEstimates} and Lemma~\ref{lemma:MappingEstimates},
which gives
\begin{multline*}
	\mathfrak{I}_3^2 = -\int_{\Ghn}\nu\partial_{\n}\u^n\cdot\v_h\dif s +
		\int_{\G^n}\nu\partial_{\n}\u^n\cdot\vhl\dif\hat s\\
	\begin{aligned}
		&= -\int_{\Ghn}\nu\partial_{\n}\u^n\cdot\v_h\dif s +
			\int_{\G^n_h}\nu\partial_{\n}\u^n\circ\Phi\cdot J\v_h\dif s\\
		&= \int_{\Ghn}\nu\partial_{\n}\u^n\cdot(J-1)\v_h\dif s +
			\int_{\Ghn}\nu(\partial_{\n}\u^n\circ\Phi-\partial_{\n}\u^n)\cdot 
			J\v_h\dif s\\
		&\leqc h^q\nrm{h^{\nf{1}{2}}\partial_{\n}\u^n}{\Ghn}
			\nrm{h^{-\nf{1}{2}}\v_h}{\Ghn}+h^{\nf{1}{2}}\nrm{\partial_{\n}\u^n
			\circ\Phi-\partial_{\n}\u^n}{\Ghn}\nrm{h^{-\nf{1}{2}}\v_h}{\Ghn}.
	\end{aligned}	
\end{multline*}
Using the standard trace inequality we have together with
Assumption~\ref{assumption:ExtensionProperties}
\begin{equation*}
	\nrm{\partial_{\n}\u^n}{\Ghn}\leqc \nrm{\u^n}{\H{2}{}{\Onh}}\leqc
	\nrm{\u^n}{\H{2}{}{\Od{\On}}} \leqc \nrm{\u^n}{\H{2}{}{\On}}.
\end{equation*}
Combining this result with \eqref{eqn:MappingEstLemma2:VelNormDeriv} now gives
the estimate
\begin{equation*}
	\vert \mathfrak{I}_3^2\vert \leqc
	h^q\nrm{\u^n}{\H{2}{}{\On}}\nrm{h^{-\nf{1}{2}}\v_h}{\Ghn} +
	\left[h^{m+\nf{1}{2}}\nrm{\u^n}{\H{m+1}{}{\On}} +
	h^q\nrm{\u^n}{\H{1}{}{\On}}\right]\nrm{h^{-\nf{1}{2}}\v_h}{\Ghn}
\end{equation*}

For the term $\mathfrak{I}_3^3$ we use the fact that $\u^n$ vanishes on $\G^n$,
so that with \eqref{eqn:chain-rule--grad-vhl}, \eqref{eqn:MappinEstimate2} and
\eqref{eqn:MappingEstimaLemma:L2Boundary} it follows
\begin{align*}
	\mathfrak{I}_3^3 = -\int_{\Ghn}\nu\partial_{\n}\v_h\cdot\u^n\dif s 
		&= \int_{\G^n}\nu\partial_{\n}\vhl\cdot\u^n\dif \hat s
		-\int_{\Ghn}\nu\partial_{\n}\v_h\cdot\u^n\dif s\\
	&= \int_{\Ghn}\nu J\partial_{\n}\v_h D\Phi^{-1}\cdot\u^n\circ\Phi\dif s
		-\int_{\Ghn}\nu\partial_{\n}\v_h\cdot\u^n\dif s\\
	&= \int_{\Ghn}\nu J\partial_{\n}\v_h D\Phi^{-1}\cdot(\u^n\circ\Phi -\u^n)
		\dif s +\int_{\Ghn}\nu\partial_{\n}\v_h(I-JD\Phi^{-1})\cdot\u^n\dif s\\
	&\leqc h^{q+\nf{1}{2}}\nrm{h^{\nf{1}{2}}\partial_{\n}\v_h}{\Ghn}
		\nrm{\u^n}{\H{2}{}{\On}} + h^q\nrm{h^{\nf{1}{2}}\partial_{\n}\v_h}{\Ghn}
		\nrm{h^{-\nf{1}{2}}\u^n}{\Ghn}.
\end{align*}
Using the trace estimate \eqref{eqn:TraceEstNormDerivBnd} we have for the first
boundary norm
\begin{equation*}
	\nrm{h^{\nf{1}{2}}\partial_{\n}\v_h}{\Ghn} =
	\sum_{T\in\ThGhn}\nrm{h^{\nf{1}{2}}\partial_{\n}\v_h}{T\cap\Ghn}\leqc
	\sum_{T\in\ThGhn}\nrm{\nabla\v_h}{T} \leqc \nrm{\nabla\v_h}{\OdT{n}}.
\end{equation*}
Furthermore, we can estimate $\nrm{h^{-\nf{1}{2}}\u^n}{\Ghn}$ using the fact
that $\u^n\circ\Phi(\x)=0$ for $\x\in\Ghn$ and
\eqref{eqn:MappingEstimaLemma:L2Boundary} to get
\begin{equation}\label{eqn:h-1/2u^n-BndBound}
	\nrm{h^{-\nf{1}{2}}u^n}{\Ghn}\leq
	h^{-\nf{1}{2}}\Big[\nrm{u^n-u^n\circ\Phi}{\Ghn} +
	\underbrace{\nrm{u^n\circ\Phi}{\Ghn}}_{=0}\Big]\leqc
	h^{q+\nf{1}{2}}\nrm{\u^n}{\H{2}{}{\On}}.
\end{equation}
These two estimates then give the bound
\begin{equation*}
	\vert\mathfrak{I}_3^3\vert \leqc h^q\nrm{\u^n}{\H{2}{}{\On}}
	\nrm{\nabla\v_h}{\OdT{n}}.
\end{equation*}
For the penalty term $\mathfrak{I}_3^4$ we proceed as for $\mathfrak{I}_3^3$
using \eqref{eqn:h-1/2u^n-BndBound}:
\begin{align*}
	\mathfrak{I}_3^4 = \frac{\sigma}{h}\int_{\Ghn}\nu\u^n\cdot\v_h\dif s 
		&= \frac{\sigma}{h}\int_{\Ghn}\nu\u^n\cdot\v_h\dif s -
		\frac{\sigma}{h}\int_{\G^n}\nu\u^n\cdot\vhl\dif \hat s\\
	&= \frac{\sigma}{h}\int_{\Ghn}\nu\u^n\cdot\v_h\dif s -
		\frac{\sigma}{h}\int_{\Ghn}\nu\u^n\circ\Phi\cdot J\v_h\dif s\\
	&= \frac{\sigma}{h}\int_{\Ghn}\nu(\u^n-\u^n\circ\Phi)\cdot J\v_h\dif s +
		\frac{\sigma}{h}\int_{\Ghn}\nu\u^n\cdot(1-J)\v_h\dif s\\
	&\leqc h^{q}\left[\nrm{\u^n}{\H{2}{ }{\On}} +
		\nrm{h^{-\nf{1}{2}}\u^n}{\Ghn}\right]\nrm{h^{-\nf{1}{2}}\v_h}{\Ghn}\\
		&\leqc h^{q}\nrm{\u^n}{\H{2}{ }{\On}}\nrm{h^{-\nf{1}{2}}\v_h}{\Ghn}
\end{align*}
Combining these estimates then gives the bound on $\mathfrak{T}_3$ as
\begin{align*}
	\vert\mathfrak{T}_3\vert &\leq \vert\mathfrak{I}_3^1\vert +
		\vert\mathfrak{I}_3^2\vert + \vert\mathfrak{I}_3^3\vert +
		\vert\mathfrak{I}_3^4\vert \\
	&\leqc h^q\nrm{\u}{\W{2,\infty}{\Q}}\tripnrmast{\v_h}{n} +
		h^{m+\nf{1}{2}}\nrm{\u^n}{\H{m+1}{}{\On}}\nrm{h^{-\nf{1}{2}}\v_h}{\Ghn}
		.\numberthis\label{eqn:consistencyT3}
\end{align*}

$\mathfrak{T}_4$: As in \eqref{eqn:chain-rule--grad-vhl}, for the divergence we
have
\begin{equation*}
	\nabla\cdot\vhl(\hat\x) = \trace(\nabla(\v_h\circ\Phi^{-1})(\hat\x)) =
	\trace(\nabla\v_h(\x) D\Phi(\x)^{-1})\qquad\text{for }\x\in\On,\;\x
	\coloneqq \Phi^{-1}(\hat\x).
\end{equation*}
We split the pressure term $\mathfrak{T}_4=\mathfrak{I}_4^1+\mathfrak{I}_4^2$
into volume and boundary integrals respectively. The volume term can then be
rewritten as
\begin{align*}
	\mathfrak{I}_4^1 &\coloneqq  -\int_{\Onh}p^n\nabla\cdot\v_h\dif\x +
		\int_{\On}p^n\nabla\cdot\vhl\dif\hat\x\\
	&= -\int_{\Onh}p^n\nabla\cdot\v_h\dif\x +
		\int_{\Onh}p^n\circ\Phi\,\trace(\nabla\v_h D\Phi^{-1})J\dif\hat\x\\
	&= 	\begin{multlined}[t]
			\int_{\Onh}p^n(J-1)\nabla\cdot\v_h\dif\x
				+ \int_{\Onh}(p^n\circ\Phi-p^n)\trace(\nabla\v_h D\Phi^{-1})
					J\dif\x\\
			+ \int_{\Onh}p^nJ\trace(\nabla\v_h D\Phi^{-1}-\nabla\v_h)\dif\x.
		\end{multlined}
\end{align*}
This can then be bounded by
\begin{align*}
	\vert\mathfrak{I}_4^1\vert &\leqc
	\begin{multlined}[t]
		h^q\nrm{p^n}{\Onh}\nrm{\nabla\cdot\v_h}{\Onh}
			+ \nrm{p^n\circ\Phi-p^n}{\Onh}
				\nrm{\trace(\nabla\v_h\,D\Phi^{-1})}{\Onh}\\
		+ \nrm{p^n}{\Onh}\nrm{\trace(\nabla\v_h D\Phi^{-1}-\nabla\v_h)}{\Onh}
	\end{multlined}\\
	&\leqc h^q\nrm{p^n}{\Onh}\nrm{\nabla\v_h}{\Onh} +
		h^{q+1}\nrm{p^n}{\pazocal{H}^1(\On)}\nrm{\nabla\v_h}{\Onh} +
		h^q\nrm{p^n}{\Onh}\nrm{\nabla\v_h}{\Onh}\\
 		&\leqc h^q \nrm{p^n}{\pazocal{H}^1(\On)}\nrm{\nabla\v_h}{\Onh}
\end{align*}
For the boundary terms we have
\begin{align*}
	\mathfrak{I}_4^2 &\coloneqq \int_{\Ghn}p^n\v_h\cdot\n\dif s -
		\int_{\G^n}p^n\vhl\cdot\n\dif\hat s\\
	&= \int_{\Ghn}p^n\v_h\cdot\n\dif s - \int_{\Ghn}p^n\circ\Phi\,\v_h\cdot\n 
		J\dif s\\
	&= \int_{\Ghn}p^n\v_h\cdot\n(1-J)\dif s + \int_{\Ghn}(p^n -
		p^n\circ\Phi)\,\v_h\cdot\n J\dif s\\
	&\leqc h^q \nrm{p^n}{\Ghn}\nrm{\v}{\Ghn} +
		\nrm{p^n-p^n\circ\Phi}{\Ghn}\nrm{\v}{\Ghn}
		\numberthis\label{eqn:consistecyI42}
\end{align*}
For the pressure part in the first of these terms two term we observe that
using \eqref{eqn:TraceIneqBnd} and
Assumption~\ref{assumption:ExtensionProperties}
gives us
\begin{align*}
	\nrm{p^n}{\Ghn}^2 = \sum_{K\in\ThGhn}\nrm{p^n}{K\cap\Ghn}^2 &\leqc
		\sum_{K\in\ThGhn}\left[h^{-1}\nrm{p^n}{K}^2 + 
		h\nrm{\nabla p^n}{K}^2\right] \\
	&\leqc h^{-1}\nrm{p^n}{\pazocal{H}^1(\Odh{\On})}^2 \\
	&\leqc h^{-1}\nrm{p^n}{\pazocal{H}^1(\On)}.
\end{align*}
For the pressure part in the second summand of \eqref{eqn:consistecyI42} we
apply \eqref{eqn:MappingEstLemma2:Pres}. Combining these results then gives
\begin{equation*}
	\vert\mathfrak{I}_4^2\vert \leqc
	h^q\nrm{p^n}{\pazocal{H}^1(\On)}\nrm{h^{-\nf{1}{2}}\v_h }{\Ghn} +
	h^m\nrm{p^n}{\pazocal{H}^m(\On)}\nrm{h^{-\nf{1}{2}}\v_h }{\Ghn}.
\end{equation*}
Together, these estimates then give us the bound
\begin{equation}\label{eqn:consistencyT4}
	\vert\mathfrak{T}_4\vert \leqc
	h^q\nrm{p^n}{\pazocal{H}^1(\On)}\tripnrmast{\v_h}{n} +
	h^m\nrm{p^n}{\pazocal{H}^m(\On)}\nrm{h^{-\nf{1}{2}}\v_h}{\Ghn}.
\end{equation}

$\mathfrak{T}_5$: The volume terms of the divergence constraint can again be
rewritten as
\begin{align*}
	\mathfrak{I}_5^1 &\coloneqq -\int_{\Onh}q_h\nabla\cdot\u^n\dif\x +
		\int_{\On}\qhl\nabla\cdot\u^n\dif\hat\x\\
	&=-\int_{\Onh}q_h\nabla\cdot\u^n\dif\x + \int_{\Onh}q_h\nabla\cdot
		\u^n\circ\Phi J\dif\hat\x\\
	&= \int_{\Onh}q_h\nabla\cdot\u^n(J-1)\dif\x +
		\int_{\Onh}q_h(\nabla\cdot\u^n\circ\Phi-\nabla\cdot\u^n)J\dif\x
\end{align*}
$\mathfrak{I}_5^1$ can then be bounded again using \eqref{eqn:MappingEstimates}
and Lemma~\ref{lemma:MappingEstimates}
\begin{align*}
	\vert\mathfrak{I}_5^1\vert &\leqc h^q\nrm{q_h}{\Onh}\nrm{\nabla\u^n}{\Onh} 
		+ \nrm{q_h}{\Onh}\nrm{\nabla\u^n\circ\Phi-\u^n}{\Onh}\\
	&\leqc h^q \nrm{q_h}{\Onh}\nrm{\nabla\u^n}{\Onh} +
		h^{q+1}\nrm{q_h}{\Onh}\nrm{\u^n}{\H{2}{}{\On}}\\
	&\leqc h^{q}\nrm{q_h}{\Onh}\nrm{\u^n}{\H{2}{}{\On}}.
\end{align*}
The boundary contribution in $\mathfrak{T_5}$ are
\begin{align*}
	\mathfrak{I}_5^2 &\coloneqq \int_{\Ghn}q_h\u^n\cdot\n\dif s -
		\int_{\G^n}\qhl\u^n\cdot\n\dif\hat s\\
	&=\int_{\Ghn}q_h\u^n\cdot\n\dif s - \int_{\Ghn}q_h(\u^n\cdot\n)\circ\Phi 
		J\dif s\\
	&=\int_{\Ghn}q_h(\u^n\cdot\n - (\u^n\cdot\n)\circ\Phi )J\dif s + 
		\int_{\Ghn}q_h\u^n\cdot\n(1-J)\dif s\\
	&\leqc \nrm{q_h}{\Ghn}\nrm{\u^n\cdot\n - (\u^n\cdot\n)\circ\Phi}{\Ghn} +
		h^q\nrm{q_h}{\Ghn}\nrm{\u^n\cdot\n}{\Ghn}	.
\end{align*}
Using the trace estimate \eqref{eqn:TraceIneqBnd} and the inverse estimate
\eqref{eqn:InvEst:DerivEl} gives us
\begin{equation*}
	\nrm{q_h}{\G_h^n} \leqc h^{-\nf{1}{2}}\nrm{q_h}{\OdT{n}}.
\end{equation*}
For the term $\nrm{\u^n\cdot\n - (\u^n\cdot\n)\circ\Phi}{\Ghn}$ we can apply
Lemma~\ref{lemma:MappingEstimate2}. Furthermore, we have due to the homogeneous
Dirichlet conditions that
\begin{align*}
	\nrm{\u^n\cdot\n}{\Ghn} &\leq \nrm{\u^n\cdot\n-(\u^n\cdot n)\circ\Phi}{\Ghn} 
		+ \nrm{\u^n\cdot\n\circ\Phi}{\Ghn}\\
	&\leq \nrm{\u^n\cdot\n-(\u^n\cdot n)\circ\Phi}{\Ghn} + 
		\underbrace{\nrm{\u^n}{\G^n}}_{=0}\\
	&\leqc h^{q+\nf{1}{2}}\nrm{\u^n}{\H{1}{}{\On}} + 
		h^{m+\nf{1}{2}}\nrm{\u^n}{\H{m+1}{}{\On}}
\end{align*}
combining these results then gives us the bound
\begin{equation*}
	\vert\mathfrak{I}_5^2\vert \leqc (1+h^q)\left[h^q\nrm{\u^n}{\H{1}{}{\On}} 
	+ h^m\nrm{\u^n}{\H{m+1}{}{\On}} \right]\nrm{q_h}{\OdT{n}}.
\end{equation*}
These bound on the volume and boundary contributions of $\mathfrak{T}_5$ then
give
\begin{equation}\label{eqn:consistencyT5}
	\vert\mathfrak{T}_5\vert \leqc h^q \nrm{\u^n}{\H{2}{}{\On}}\tripnrmast{q_h}{n}
	+ h^m\nrm{\u^n}{\H{m+1}{}{\On}}\tripnrmast{q_h}{n}.
\end{equation}

$\mathfrak{T}_6$: To bound $\vert\mathfrak{T}_6\vert$ we use the
Cauchy-Schwartz inequality, Lemma~\ref{lemma:GP-consitency} and
Assumption~\ref{assumption:ExtensionProperties} to get
\begin{align*}
	\vert\mathfrak{T}_6\vert 
	&= \gamma_s\vert L \nu i_h^n(\u^n,\v_h)
		+ \nf{L}{\nu} i_h^n(\u^n,\v_h) 
		+ \nf{1}{\nu}j_h^n(p^n,q_h)\vert\\
	&\leqc (\nu + \nf{1}{\nu})L^{\nf{1}{2}}i_h^n(\u^n,\u^n)^{\nf{1}{2}}
		L^{\nf{1}{2}}i_h^n(\v_h,\v_h)^{\nf{1}{2}}
		+ j_h^n(p^n,p^n)^{\nf{1}{2}}j_h^n(q_h,q_h)^{\nf{1}{2}} \\
	&\leqc (\nu + \nf{1}{\nu}) h^m L^{\nf{1}{2}}
		\nrm{\u^n}{\H{m+1}{}{\OdT{n}}}\nrm{\v_h}{\H{1}{}{\OdT{n}}}
		+ h^m\nrm{p^n}{\pazocal{H}^m(\OdT{n})}\nrm{q_h}{\OdT{n}}\\
	&\leqc (\nu + \nf{1}{\nu}) h^m L^{\nf{1}{2}}
		\nrm{\u^n}{\H{m+1}{}{\On}}\tripnrmast{\v_h}{n}
		+ h^m \nrm{p^n}{\pazocal{H}^m(\On)}\tripnrmast{q_h}{n}.
		\numberthis\label{eqn:consistencyT6}
\end{align*}

Combining estimates \eqref{eqn:consistencyT1}, \eqref{eqn:consistencyT2},
\eqref{eqn:consistencyT3}, \eqref{eqn:consistencyT4}, \eqref{eqn:consistencyT5}
and \eqref{eqn:consistencyT6} then gives the desired result.
\end{proof}


\subsection{Error Estimates}\label{sec:analysis::subsec:error-estimates}

We define $\u^n_I\coloneqq \Int\u^n$ and $p^n_I\coloneqq \Int p^n$. We then
split the velocity error $\Eu{n}$ and the pressure $\Ep{n}$ each into an
interpolation and discretisation error
\begin{alignat*}{3}
	& \Eu{n} &&= (\u^n - \u^n_I) + (\u^n_I - \u^n_h) &&= \etab^n + \erru{n}\\
	& \Ep{n} &&= (p^n -p^n_I) + (p^n_I - p^n_h) &&= \zeta^n + \errp{n}. 
\end{alignat*}
Inserting this into the error equation \eqref{eqn:ErrorEquation} and
rearranging terms then yields
\begin{multline}\label{eqn:DiscretisationErrorEquation}
	\int_{\Onh}\frac{\erru{n}-\erru{n-1}}{\dt}\cdot\v_h\dif\x + 
		a_n^h(\erru{n},\v_h) + b_h^n(\errp{n},\v_h) + b_h^n(q_h,\erru{n}) +		
		s_h^n((\erru{n},\errp{n}),(\v_h,q_h)) \\
	= \mathfrak{E}_c^n(\v_h,q_h) + \mathfrak{E}_I^n(\v_h,q_h)
\end{multline}
with 
\begin{equation*}
	\mathfrak{E}_I^n(\v_h,q_h) = -\int_{\Onh}\frac{\etab^n-\etab^{n-1}}{\dt}
	\cdot\v_h\dif\x - a_n^h(\etab^n,\v_h) - b_h^n(\zeta^n,\v_h) - 
	b_h^n(q_h,\etab^n) - s_h^n((\etab^n,\zeta^n),(\v_h,q_h)).
\end{equation*}

For the interpolation error component, we can prove the following estimate.

\begin{lemma}\label{lemma:InterpolationErrorComponent}
Assume for the velocity that $\u\in\L{\infty}{0,T;\H{m+1}{}{\O(t)}}$,
$\u_t\in\L{\infty}{0,T;\H{m}{}{\O(t)}}$ and for the pressure that
$p\in\pazocal{L}^{\infty}(0,T;\pazocal{H}^m(\O(t)))$. We can then bound the
interpolation error term by
\begin{equation*}
	\vert\mathfrak{E}_I^n(\v_h,q_h)\vert \leqc \frac{h^m L^{\nf{1}{2}}}{\nu} 
	R_{I,1}(\u,p)\tripnrmast{\v_h}{n} + h^m  R_{I,2}(\u,p) \tripnrmast{q_h}{n}.
\end{equation*}
with
\begin{equation*}
	R_{I,1}(\u,p) = \sup_{t\in[0,T]}\left( \nrm{\u}{\H{m+1}{}{\O(t)}} +
		\nrm{\u_t}{\H{m}{}{\O(t)}} + \nrm{p}{\pazocal{H}^m(\O(t))}\right)
\end{equation*}
and
\begin{equation*}
	R_{I,2}(\u,p) = \sup_{t\in[0,T]}\left( \nrm{\u}{\H{m+1}{}{\O(t)}} +
		\nrm{p}{\pazocal{H}^m(\O(t))}\right)
\end{equation*}

\end{lemma}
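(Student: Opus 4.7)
The plan is to bound each of the five summands in $\mathfrak{E}_I^n$ individually, using interpolation estimates (Lemma~\ref{lemma:InterpolationEstimate}, Lemma~\ref{lemma:InterpolationEstTripNrm}), continuity of the bilinear forms (Lemma~\ref{lemma:continuity}), ghost-penalty consistency (Lemma~\ref{lemma:GP-consitency}), the norm equivalences (Lemma~\ref{lemma:TriNrmEquiv}, \eqref{eqn:TripNormEst}), the discrete Poincaré inequality~\eqref{eqn:Poincare:tripnrm}, and the extension properties from Assumption~\ref{assumption:ExtensionProperties}.

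For the three bilinear-form terms $a_h^n(\etab^n,\v_h)$, $b_h^n(\zeta^n,\v_h)$ and $b_h^n(q_h,\etab^n)$, continuity (Lemma~\ref{lemma:continuity}) gives bounds in the $\tripnrm{\cdot}_n$ norm, which by \eqref{eqn:TripNormEst} is controlled by $\tripnrmast{\cdot}{n}$ on the discrete arguments, while Lemma~\ref{lemma:InterpolationEstTripNrm} yields $\tripnrm{\etab^n}_n\leqc h^m\nrm{\u^n}{\H{m+1}{}{\On}}$ and $\tripnrm{\zeta^n}_n\leqc h^m\nrm{p^n}{\pazocal{H}^m(\On)}$. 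For the stabilisation piece $s_h^n((\etab^n,\zeta^n),(\v_h,q_h))$, I split into its three contributions; in each of the two $i_h^n$-pieces, I apply the Cauchy--Schwarz inequality in $i_h^n$, bound $i_h^n(\etab^n,\etab^n)^{\nf12}\leqc h^m\nrm{\u}{\H{m+1}{}{\OdT{n}}}\leqc h^m\nrm{\u}{\H{m+1}{}{\O^n}}$ via Lemma~\ref{lemma:GP-consitency} combined with \eqref{eqn:ExtentionExtimate} and \eqref{eqn:DiscreteDomainInclusions}, and bound $L\,i_h^n(\v_h,\v_h)\leqc\tripnrmast{\v_h}{n}^2$ via Lemma~\ref{lemma:TriNrmEquiv}. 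Multiplication by the factors $L\gamma_s\nu$ and $L\gamma_s/\nu$ produces contributions of order $\nu h^m L^{\nf12}$ and $\nf{h^m L^{\nf12}}{\nu}$ respectively, both absorbed into the leading $\nf{h^m L^{\nf12}}{\nu}$ term. The $j_h^n$-piece is handled analogously using the pressure part of Lemma~\ref{lemma:GP-consitency} and Lemma~\ref{lemma:TriNrmEquiv}.

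The most delicate term is the discrete time derivative $\int_{\Onh}\dt^{-1}(\etab^n-\etab^{n-1})\cdot\v_h\,\dif\x$. Here $\etab^n$ and $\etab^{n-1}$ live on different physical domains, so I first rewrite $\etab^n-\etab^{n-1}=(I-\Int)\bigl(\Ex\u^n-\Ex\u^{n-1}\bigr)$ and then exploit the fundamental theorem in time applied to the extension, $\Ex\u^n-\Ex\u^{n-1}=\int_{t_{n-1}}^{t_n}(\Ex\u)_t(s)\,\dif s$, which makes sense on $\OdT{n}\subset\Od{\O(t)}$ for $t\in I_n$ thanks to \eqref{eqn:DiscreteDomainInclusions}. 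A triangle inequality under the integral combined with the elementwise Lagrange interpolation estimate from Lemma~\ref{lemma:InterpolationEstimate} summed over $\OdT{n}$ yields
\[
\Big\|\frac{\etab^n-\etab^{n-1}}{\dt}\Big\|_{\Onh}\leqc h^m\,\esssup_{s\in I_n}\nrm{(\Ex\u)_t(s)}{\H{m}{}{\Od{\O(s)}}},
\]
and the extension bound \eqref{eqn:TimeDerivExtentionExtimate} converts the right-hand side into the required $h^m(\nrm{\u}{\H{m+1}{}{\O(t)}}+\nrm{\u_t}{\H{m}{}{\O(t)}})$. Cauchy--Schwarz together with the discrete Poincaré inequality~\eqref{eqn:Poincare:tripnrm} then gives the bound against $\tripnrmast{\v_h}{n}$.

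Summing all six contributions and using $L\geq 1$ to absorb lower-order powers of $L$, the $\tripnrmast{\v_h}{n}$-terms combine into $\nf{h^m L^{\nf12}}{\nu}\,R_{I,1}\,\tripnrmast{\v_h}{n}$ and the $\tripnrmast{q_h}{n}$-terms into $h^m R_{I,2}\,\tripnrmast{q_h}{n}$, which gives the claim. The main technical hurdle is the time-derivative step: commuting the interpolation with the time integral of the extended function, and correctly invoking \eqref{eqn:TimeDerivExtentionExtimate} on the active mesh strip — without these, one cannot express the discrete time difference in terms of the regularity of $\u$ and $\u_t$ on the physical moving domain.
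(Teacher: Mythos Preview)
Your proposal is correct and follows essentially the same approach as the paper: the five terms are bounded individually using continuity (Lemma~\ref{lemma:continuity}), the interpolation estimate in the $\tripnrm{\cdot}_n$-norm (Lemma~\ref{lemma:InterpolationEstTripNrm}), and Cauchy--Schwarz with Lemma~\ref{lemma:GP-consitency} for the ghost-penalty piece. For the discrete time-derivative term the paper proceeds equivalently to your argument, defining $\etab(t)=\Ex\u(t)-\Int\Ex\u(t)$ for $t\in I_n$ so that $\etab^n-\etab^{n-1}=\int_{t_{n-1}}^{t_n}\etab_t\,ds$, and then applying the interpolation estimate to $\etab_t$ together with \eqref{eqn:TimeDerivExtentionExtimate}; this is exactly your ``commute $(I-\Int)$ with the time integral'' step in a different phrasing.
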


\begin{proof}
We split the interpolation error terms into five different parts
$\mathfrak{E}_I^n(\v_h,q_h) =
\mathfrak{T}_7+\mathfrak{T}_8+\mathfrak{T}_9+\mathfrak{T}_{10}+\mathfrak{T}_{11
}$. These are the time-derivative term, the diffusion bilinear form, the
pressure coupling term, the divergence constraint and ghost-penalty operator
respectively. As in Lemma~\ref{lemma:consistency}, we deal with each
constituent term separately.
	
The proof for the time-derivative approximation $\mathfrak{T}_7$ is part of
\cite[Lemma~5.12]{LO19}. For completeness sake, we also give it here. We extent
$\u_h^n$ in time as the Lagrange interpolant of $\u(t)$ in all nodes of
$\OdT{n}$, so that $\u^n_I(t_{n-1})=\u^{n-1}_I$ on $\Onh$. As $(\u^n_I)_t$ is
the Lagrange interpolant of $\u_t$, we then have with
\eqref{eqn:DiscreteDomainInclusions} and \eqref{eqn:ExtentionExtimate} that
\begin{equation}\label{eqn:InterpolTimederivativeEst}
	\nrm{\etab^n_t}{\Onh}\leqc h^m\nrm{\u_t}{\H{m}{}{\OdT{n}}}\leqc
	h^m\nrm{\u_t}{\Od{\O(t)}}\leqc h^m\nrm{\u_t}{\H{m}{}{\O(t)}}\quad t\in I_n.
\end{equation}
	
Using the Cauchy-Schwarz inequality twice as well as
\eqref{eqn:InterpolTimederivativeEst} and
\eqref{eqn:TimeDerivExtentionExtimate} we then get
\begin{align*}
	\vert\mathfrak{T}_7\vert = \left\vert
		\int_{\Onh}\frac{\etab^n-\etab^{n-1}}{\dt}\cdot\v_h\dif\x\right\vert 
		&\leq\vert\dt\vert^{-1}\nrm{\etab^n-\etab^{n-1}}{\Onh}
		\nrm{\v_h}{\Onh}\\
	&= \vert\dt\vert^{-1}\Big\Vert\int_{t_{n-1}}^{t_n} \etab_t(s)\dif
		s\Big\Vert_{\Onh}\nrm{\v_h}{\Onh}\\
	&\leq \vert\dt\vert^{-\nf{1}{2}}\Big(
		\int_{t_{n-1}}^{t_n}\nrm{\etab_t(s)}{\Onh}\dif
		s\Big)^{\nf{1}{2}}\nrm{\v_h}{\Onh}\\
	&\leq \sup_{t\in[t_{n-1},t_n]}\nrm{\etab_t(t)}{\Onh}\nrm{\v_h}{\Onh}\\
	&\leqc h^m \sup_{t\in[t_{n-1},t_n]}\nrm{u_t(t)}{\H{m}{}{\Od{\O(t)}}}
		\nrm{\v_h}{\Onh}\\
	&\leqc h^m\sup_{t\in[0,T]}\big( \nrm{\u}{\H{m+1}{}{\O(t)}} +
		\nrm{\u_t}{\H{m}{}{\O(t)}} \big)\nrm{\v_h}{\Onh}.
\end{align*}
	
For the diffusion term $\mathfrak{T}_8$ we use the continuity result
\eqref{eqn:DiffusionContinuity} and Lemma~\ref{lemma:InterpolationEstTripNrm}
for the interpolation term and \eqref{eqn:TripNormEst} for the test function,
which gives
\begin{equation*}
	\vert\mathfrak{T}_8\vert = \vert -a_h^n(\etab^n,\v_h)\vert \leqc
	\tripnrm{\etab^n}_n\tripnrm{\v_h}_n\leqc
	h^m\nrm{\u^n}{\H{m+1}{}{\On}}\tripnrmast{\v_h}{n}.
\end{equation*}
using the same technique, we can estimate the pressure and divergence bilinear
forms as
\begin{alignat*}{3}
	& \vert \mathfrak{T}_9\vert &&= \vert -b_h^n(\zeta^n,\v_h)\vert &&\leqc
		h^m\nrm{p^n}{\pazocal{H}^m(\On)}\tripnrmast{\v_h}{n}\\
	& \vert\mathfrak{T}_{10}\vert &&= \vert -b_h^n(q_h,\etab^n)\vert &&\leqc
		h^m\nrm{\u^n}{\H{m+1}{}{\On}}\tripnrmast{q_h}{n}.
\end{alignat*}
	
For the ghost-penalty term $\mathfrak{T}_{11}$ (see also
\cite[Lemma~5.12]{LO19}) we use the Cauchy-Schwarz inequality and
Lemma~\ref{lemma:GP-consitency} with \eqref{eqn:ExtentionExtimate}
\begin{align*}
	\vert\mathfrak{T}_{11}\vert = \vert s_h^n((\etab^n,\zeta^n),(\v_h,q_h)) 
		&\leqc (\nu+\nf{1}{\nu}) L^{\nf{1}{2}}
			i_h^n(\etab^n,\etab^n)^{\nf{1}{2}}
			L^{\nf{1}{2}} i_h^n(\v_h,\v_h)^{\nf{1}{2}}
		+ j_h^n(\zeta^n,\zeta^n)^{\nf{1}{2}}j_h^n(q_h,q_h)^{\nf{1}{2}}\\
	&\leqc \begin{multlined}[t]
 				(\nu + \nf{1}{\nu}) h^m L^{\nf{1}{2}}
				\nrm{\u^n}{\H{m+1}{}{\OdT{n}}}\nrm{\v_h}{\OdT{n}}\\
				+ h^m\nrm{p^n}{\pazocal{H}^m(\OdT{n})}\nrm{q_h}{\OdT{n}}
		 	\end{multlined}\\
	&\leqc (\nu + \nf{1}{\nu}) h^m L^{\nf{1}{2}}\nrm{\u^n}{\H{m+1}{}{\On}}
			\tripnrmast{\v_h}{n} +
		h^m\nrm{p^n}{\pazocal{H}^m(\On)}\tripnrmast{q_h}{n}.
\end{align*}

\end{proof}

\begin{theorem}\label{theorem:ErrorEstimateEnergyNorm}
For sufficiently small $\dt$ and $h$, the velocity error can be bound by
\begin{multline*}
	\nrm{\Eu{n}}{\Onh}^2 
		+ \sum_{k=1}^n\Big\{\nrm{\Eu{k}-\Eu{k-1}}{\O_h^k}^2 
		+ \dt\big[ \nu c_{L\ref{lemma:coercivity}}
			\tripnrmast{\Eu{k}}{k}^2 + \frac{L}{\nu}i_h^k(\Eu{k},\Eu{k}) 
			\big] \Big\}\\
	\leq \exp((c_{T\ref{theorem:ErrorEstimateEnergyNorm}a}/\nu) t_n)\left[
		+ \dt\sum_{k=1}^n c_{T\ref{theorem:ErrorEstimateEnergyNorm}b} \Big[ \dt^2 +
			h^{2q} + \frac{h^{2m}L}{\nu} + \frac{1}{\dt}(h^{2q} +
			\frac{h^{2m}}{\nu})\Big] R(\u,p,\f) \right]
\end{multline*}
with $R(\u,p,\f)= \sup_{t\in[0,T]}(\nrm{\u}{\H{m+1}{}{\O(t)}}^2 +
\nrm{\u_t}{\H{m}{}{\O(t)}}^2 + \nrm{p}{\pazocal{H}^m(\O(t))}^2) +
\nrm{\u}{\W{2,\infty}{\Q}}^2 + \nrm{\f}{\H{1}{}{\On}}^2$
and constants $c_{T\ref{theorem:ErrorEstimateEnergyNorm}}$ independent of
$\dt$, $n$ and $h$. 

For the pressure we have the bound
\begin{equation*}
	\dt^2\sum_{k=1}^n \tripnrmast{\Ep{k}}{k}^2
	\leqc \dt\sum_{k=1}^{n-1}\frac{1}{\nu} \nrm{\Eu{k}}{\O_h^{k}}^2
		+ \dt\sum_{k=1}^n c \Big[ \dt^2 + h^{2q} + \frac{h^{2m}L}{\nu} +
			\frac{1}{\dt}(h^{2q} + \frac{h^{2m}}{\nu})\Big] R(\u,p,\f).
\end{equation*}
\end{theorem}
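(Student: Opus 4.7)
The plan is to replicate the telescoping energy argument of Theorem~\ref{theorem:StabilityDiscrete}, but starting from the discretisation error equation \eqref{eqn:DiscretisationErrorEquation} and carrying the functionals $\mathfrak{E}_c^n + \mathfrak{E}_I^n$ on the right-hand side. First I would exploit the splits $\Eu{n} = \etab^n + \erru{n}$ and $\Ep{n} = \zeta^n + \errp{n}$: the interpolation parts $\etab^n,\zeta^n$ are controlled directly by Lemma~\ref{lemma:InterpolationEstTripNrm} together with Assumption~\ref{assumption:ExtensionProperties}, so only the discrete errors $\erru{n},\errp{n}$ need an energy argument; a triangle inequality at the end recovers the stated bounds for $\Eu{n},\Ep{n}$.

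For the velocity, I would test \eqref{eqn:DiscretisationErrorEquation} with $(\v_h,q_h) = 2\dt(\erru{n},-\errp{n})$. The bilinear forms $b_h^n$ cancel, while the ghost-penalty contributes the non-negative term $2\dt\gamma_s\nu^{-1} j_h^n(\errp{n},\errp{n})$. Using the polarisation identity \eqref{eqn:TimeDiffIdentity}, the coercivity estimate of Lemma~\ref{lemma:coercivity}, and the positivity of $2\dt L\nu^{-1} i_h^n(\erru{n},\erru{n})$, the left-hand side is bounded below by
\begin{equation*}
\nrm{\erru{n}}{\Onh}^2 + \nrm{\erru{n}-\erru{n-1}}{\Onh}^2 + \dt\Big[\nu c_{L\ref{lemma:coercivity}}\tripnrmast{\erru{n}}{n}^2 + \frac{L}{\nu} i_h^n(\erru{n},\erru{n})\Big] - \nrm{\erru{n-1}}{\Onh}^2.
\end{equation*}
Lemmas~\ref{lemma:consistency} and \ref{lemma:InterpolationErrorComponent} bound the right-hand side by $\dt A_1\tripnrmast{\erru{n}}{n} + \dt A_2\tripnrmast{\errp{n}}{n}$ with $A_1 \lesssim \dt + h^q + h^m L^{1/2}/\nu$ and $A_2 \lesssim h^q + h^m$, multiplied by the regularity data. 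The $A_1$-part is absorbed into the coercive term via weighted Young, while the leftover $\nrm{\erru{n-1}}{\Onh}^2$ is re-expressed on $\O_h^{n-1}$ via \eqref{eqn:estimate-on-old-domain}, exactly as in the stability proof.

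The principal obstacle is the pressure test term $\dt A_2\tripnrmast{\errp{n}}{n}$: the LHS offers only $j_h^n(\errp{n},\errp{n})$ and, by \eqref{eqn:PreNormEqiv}, cannot absorb $\tripnrmast{\errp{n}}{n}^2$ directly. My strategy is to close the loop via the inf-sup Lemma~\ref{lemma:bad-inf-sup}: testing \eqref{eqn:DiscretisationErrorEquation} with $(\v_h,0)$, isolating $b_h^n(\errp{n},\v_h)$, and bounding the remaining terms using Cauchy-Schwarz together with the discrete Poincar\'e inequality \eqref{eqn:Poincare:tripnrm}, the continuity of $a_h^n+\nu L i_h^n$ (Lemma~\ref{lemma:continuity}), and the residual bounds of Lemmas~\ref{lemma:consistency} and \ref{lemma:InterpolationErrorComponent} gives
\begin{equation*}
\beta\,\nrm{\errp{n}}{\Onh} \lesssim \dt^{-1}\nrm{\erru{n}-\erru{n-1}}{\Onh} + (\nu+\nu^{-1})\tripnrmast{\erru{n}}{n} + j_h^n(\errp{n},\errp{n})^{1/2} + \mathcal{R}_n,
\end{equation*}
where $\mathcal{R}_n$ collects consistency and interpolation residuals. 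A weighted Young then splits $\dt A_2\tripnrmast{\errp{n}}{n}$ into pieces absorbable by the LHS (the $\tripnrmast{\erru{n}}{n}^2$, $j_h^n$, and $\dt^{-1}\nrm{\erru{n}-\erru{n-1}}{\Onh}^2$ contributions, for small enough Young parameter) and higher-order residuals; the remaining $\nu^{-1}\nrm{\erru{n-1}}{\O_h^{n-1}}^2$ piece produces the Gronwall factor $\exp((c/\nu)t_n)$. Summing and applying a discrete Gronwall inequality closes the velocity bound. For the pressure estimate, I would square the displayed inf-sup bound, multiply by $\dt^2$, sum over $k$, and plug in the velocity bound on $\sum\nrm{\erru{k}-\erru{k-1}}{\Onh}^2$ to obtain the claimed estimate. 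A final triangle inequality with $\etab^n,\zeta^n$ together with Lemma~\ref{lemma:InterpolationEstTripNrm} then converts the bounds on $\erru{n},\errp{n}$ into the stated bounds on $\Eu{n},\Ep{n}$.
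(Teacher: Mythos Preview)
Your proposal is correct and follows essentially the same route as the paper: energy-test \eqref{eqn:DiscretisationErrorEquation} with $2\dt(\erru{k},-\errp{k})$, apply coercivity and \eqref{eqn:estimate-on-old-domain}, then handle the dangling pressure contribution via the inf-sup Lemma~\ref{lemma:bad-inf-sup} (squared, scaled by $\dt^2$, summed, and fed back into the velocity inequality) before closing with Gronwall, exactly as the paper does. One point of care: the piece you list as ``$\dt^{-1}\Vert\erru{n}-\erru{n-1}\Vert_{\Onh}^2$'' is only absorbable into $\Vert\erru{n}-\erru{n-1}\Vert_{\Onh}^2$ if the Young weight on the pressure term scales like $\dt^2$ (equivalently, the paper takes $\eps_2\simeq\dt$ in its first Young step and then controls $\dt^2\sum_k\tripnrmast{\errp{k}}{k}^2$ by the velocity left-hand side), and it is precisely this choice that produces the $\dt^{-1}(h^{2q}+h^{2m}/\nu)$ term in the final bound.
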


\begin{proof}
We prove the result for the discretisation error, since the result then
immediately follows by optimal interpolation properties.
We start with the velocity estimate. Similar to the stability proof, for
$n=k$ we test the error equation \eqref{eqn:DiscretisationErrorEquation} with
the test-function $(\v_h,q_h)=2\dt(\erru{k},-\errp{k})$ and use the identity
\eqref{eqn:TimeDiffIdentity} to get
\begin{multline*}
	\nrm{\erru{k}}{\O_n^k}^2 + \nrm{\erru{k} -\erru{k-1}}{\O_h^k}^2  
		+ 2\dt(a_h^k + \nu L i_h^k)(\erru{k},\erru{k}) 
		+ \nf{2 L \dt}{\nu} i_h^k(\erru{k},\erru{k})
		+ \nf{2\dt}{\nu}j_h^k(\errp{k},\errp{k}) \\
	= 2\dt(\mathfrak{E}_c^k+ \mathfrak{E}_I^k)(\erru{k},-\errp{k}) 
		+ \nrm{\erru{k-1}}{\O_h^k}^2.
\end{multline*}
Using the coercivity result Lemma~\ref{lemma:coercivity} and
\eqref{eqn:estimate-on-old-domain} we get (with the appropriate choice of
$\eps$) that
\begin{multline*}
	\nrm{\erru{k}}{\O_n^k}^2 + \nrm{\erru{k} -\erru{k-1}}{\O_h^k}^2  
		+ 2\dt c_{L\ref{lemma:coercivity}}\tripnrmast{\erru{k}}{k}^2
		+ \dt\frac{2 L}{\nu} i_h^k(\erru{k},\erru{k})
		+ \dt\frac{2}{\nu}j_h^k(\errp{k},\errp{k}) \\
	\leq (1 + \frac{\overline c}{\nu} \dt)\nrm{\erru{k-1}}{\O_h^{k-1}}^2
		+ \dt\frac{\nu c_{L\ref{lemma:coercivity}}}{2}\tripnrmast{\erru{k-1}}{k-1}^2
		+ \dt\frac{c'h^2}{\nu} L i_h^{k-1}(\erru{k-1}, \erru{k-1})\\
		+ 2\dt(\vert\mathfrak{E}_c^k \vert + \vert\mathfrak{E}_I^k\vert)
			(\erru{k},\errp{k}).
\end{multline*}
Applying the weighted Young inequality to Lemma~\ref{lemma:consistency}
and Lemma~\ref{lemma:InterpolationErrorComponent} then gives
\begin{multline*}
	\vert\mathfrak{E}_c^k(\erru{k},\errp{k}) +
		\mathfrak{E}_I^k(\erru{k},\errp{k})\vert \\
	\leq \frac{1}{\eps_1}c\big(\dt^2 + h^{2q} 
		+ \frac{h^{2m}L}{\nu^2}\big)R(\u,p,\f)
		+ \eps_1\tripnrmast{\erru{k}}{k}^2 
		+ \frac{1}{\eps_2}c\big(h^{2q} + \frac{h^{2m}}{\nu^2}\big)R'(\u,p) 
		+ \eps_2\tripnrmast{\errp{k}}{k}^2
\end{multline*}
with
\begin{equation*}
	R'(\u,p) = \sup_{t\in[0,T]}\big(
		\nrm{\u(t)}{\H{m+1}{}{\O(t)}}^2 + \nrm{p(t)}{\pazocal{H}^m(\O(t))}^2 \big).
\end{equation*}
Now we choose $\eps_1 = \nf{\nu c_{L\ref{lemma:coercivity}}}{4}$ and $\eps_2 =
\nf{\dt\beta^2}{4c_Yc_{P,h}^2}$. With the constant $c_Y>0$ to be specified
later. Inserting these bound on the consistency and interpolation estimates
into the above inequality and summing over $k=1,\dots,n$ and using
$\erru{0}=\bm{0}$ gives
\begin{multline}\label{eqn:Vel-error-estimate-intermediate}
	\nrm{\erru{n}}{\O_h^k}^2 
		+ \sum_{k=1}^n\nrm{\erru{k}-\erru{k-1}}{\O_h^k}^2 
		+ \dt \sum_{k=1}^n\big[ \nu c_{L\ref{lemma:coercivity}}
			\tripnrmast{\erru{k}}{k}^2 + \frac{L}{\nu}i_h^k(\erru{k},\erru{k}) 
			\big]
		+ \dt \sum_{k=1}^n \frac{2}{\nu} j_h^k(\errp{k},\errp{k}) \\
	\leq \dt\sum_{k=1}^{n-1}\frac{\overline{c}}{\nu} \nrm{\erru{k}}{\O_h^{k}}^2
		+ \dt^2 \sum_{k=1}^2 \frac{\beta^2}{2c_Yc_{P,h}^2}
			\tripnrmast{\errp{k}}{k}^2\\
		+ \dt\sum_{k=1}^n c \Big[ \dt^2 + h^{2q} + \frac{h^{2m}L}{\nu} +
			\frac{1}{\dt}(h^{2q} + \frac{h^{2m}}{\nu})\Big] R(\u,p,\f).
\end{multline}
under the assumption, that $h$ is sufficiently small, such that $c'h^2 \leq 1$.
To complete the velocity estimate, we therefore require the pressure
estimate.

Rearranging the error equation \eqref{eqn:DiscretisationErrorEquation} and
using the test-function $q_h=0$ gives
\begin{align*}
	b_h^k(\errp{k},\v_h) 
		&= - \lprod{\nf{1}{\dt}(\erru{k}-\erru{k-1})}{\v_h}{\O_h^k}
			-(a_h^k + \nu L i_h^k)(\erru{k},\v_h)
			- \frac{L}{\nu}i_h^k(\erru{k},\v_h)
			+ (\mathfrak{E}_c^k+ \mathfrak{E}_I^k)(\erru{k},0)\\
		&\leq 
		\begin{multlined}[t]
		 	\Big[ \frac{c_{P,h}}{\dt}\nrm{\erru{k}-\erru{k-1}}{\O_h^k}
			+ \nu c_{L\ref{lemma:continuity}}\tripnrmast{\erru{k}}{k}
			+ \frac{L^{\nf{1}{2}}}{\nu}i_h^k(\erru{k},\erru{k})^{\nf{1}{2}}\\
			+ \hat{c}\big(\dt + h^q + \frac{L^{\nf{1}{2}}h^m}{\nu}\big)
				(R_{c,1}+R_{I,1})(\u,p,\f) \Big]\tripnrmast{\v_h}{k}
		\end{multlined}
\end{align*}
where $ \hat{c} = c_{L\ref{lemma:consistency}} + 
c_{L\ref{lemma:InterpolationErrorComponent}}$. Using the inf-sup result from
Lemma~\ref{lemma:bad-inf-sup} together with \eqref{eqn:PreNormEqiv}, we then
have
\begin{multline*}
	\beta\tripnrmast{\errp{k}}{k} \leq
		\frac{c_{P,h}}{\dt}\nrm{\erru{k}-\erru{k-1}}{\O^k_h}  
		+ \nu c_{L\ref{lemma:continuity}}\tripnrmast{\erru{k}}{k}
		+ \frac{L^{\nf{1}{2}}}{\nu}i_h^k(\erru{k},\erru{k})^{\nf{1}{2}}
		+ (1+\beta)j_h^n(\errp{k},\errp{k})^{\nf{1}{2}}\\
		+ \hat{c}\big(\dt + h^q + \frac{L^{\nf{1}{2}}h^m}{\nu}\big)
			(R_{c,1}+R_{I,1})(\u,p,\f)
\end{multline*}
Squaring this, using Young's inequality to remove the product terms multiplying
with $\dt^2$ and summing over $k=1,\dots,n$, we get
\begin{multline}\label{eqn:pres_discr-err-intermediate_est}
	\dt^2\sum_{k=1}^n\frac{\beta^2}{c_{P,h}^2c_Y}\tripnrmast{\errp{k}}{k}^2
	\leq \sum_{k=1}^n\nrm{\erru{k}-\erru{k-1}}{\O^k_h}^2
		+ \dt\sum_{k=1}^n \Big[\dt(\nu^2c_{L\ref{lemma:continuity}}^2)
			\tripnrmast{\erru{k}}{k}^2
			+ \frac{\dt L}{\nu^2}i_h^k(\erru{k},\erru{k}) \Big]\\
		+ \dt \sum_{k=1}^n \dt(1+\beta)^2 j_h^n(\errp{k},\errp{k})
		+ \dt \sum_{k=1}^n \dt \hat{c}^2\big(\dt^2 + h^{2q} +
			\frac{Lh^{2m}}{\nu^2}\big)R(\u,p,\f)
\end{multline}
where $c_Y$ stems from the estimate $(\sum_{i=1}^n a_i)^2 \leq
n\sum_{i=1}^na_i^2$. We make the technical assumptions that
$\dt\nu^2c_{L\ref{lemma:continuity}}^2 \leq \nu c_{L\ref{lemma:continuity}}$
and $\dt(1+\beta^2)\leq 2/\nu$. Note, that since we are interested in the case 
of $\nu \ll 1$, these assumptions are not problematic and the inequalities are
not sharp. For sufficiently small $\dt$, that is $\dt L/\nu^2\leq 2L/\nu$, we
can then bound the right-hand side of
\eqref{eqn:pres_discr-err-intermediate_est} with
\eqref{eqn:Vel-error-estimate-intermediate} which proves the error estimate.

The pressure discretisation error on the right hand side of 
\eqref{eqn:Vel-error-estimate-intermediate} can therefore be bound by the
other terms on the right-hand side of
\eqref{eqn:Vel-error-estimate-intermediate}. This then gives us the estimate
\begin{multline}
	\nrm{\erru{n}}{\O_h^k}^2 
		+ \sum_{k=1}^n\Big\{\nrm{\erru{k}-\erru{k-1}}{\O_h^k}^2 
		+ \dt\big[ \nu c_{L\ref{lemma:coercivity}}
			\tripnrmast{\erru{k}}{k}^2 + \frac{L}{\nu}i_h^k(\erru{k},\erru{k}) 
			\big] \Big\}\\
	\leq \dt\sum_{k=1}^{n-1}\frac{2\overline{c}}{\nu}\nrm{\erru{k}}{\O_h^{k}}^2
		+\dt\sum_{k=1}^n 2c \Big[ \dt^2 + h^{2q} + \frac{h^{2m}L}{\nu} +
			\frac{1}{\dt}(h^{2q} + \frac{h^{2m}}{\nu})\Big] R(\u,p,\f).
\end{multline}
Applying Gronwall's Lemma then proves the result.
\end{proof}

\begin{remark}
To be able to get an optimal estimate for the velocity and pressure error from
the above proof, which do not depend on negative powers of $\dt$ we would need
an estimate for $\nf{1}{\dt}\nrm{\erru{k} - \erru{k-1}}{\O^k_h}^2$ which is
independent of negative powers of $\dt$.
Standard approaches to estimate this, such as in \cite{BW11}, do not work here,
since the proof requires $\erru{k} - \erru{k-1}$ to be weakly divergence free
with respect to $Q_h^k$. An alternative would be to estimate
$\nrm{\frac{\erru{k} -\erru{k-1}}{\dt}}{-1}$. However, standard approaches such
as in \cite{dFGJN15} are again not viable due to lack of weak divergence
conformity of the solution at two different time-steps.

If such an estimate was at hand, we would also be able to avoid the negative
power of $\dt$ on the right hand side of both the velocity and pressure
estimates.
\end{remark}

\begin{remark}
The choice of $\dt\sim h$ would lead to an overall loss of half an order in the
energy norm, provided that the spatial error is dominant. However, since this
term
is the result of lack of full consistency of the
method (due to geometry approximation, ghost-penalties and divergence
constraint with respect to different spaces for different time-steps), we do
not expect that this is the major, dominating error part.

\end{remark}

\begin{remark}
It cannot be expected, that the above result is sharp, since we have made some
very crude estimates to bound \eqref{eqn:pres_discr-err-intermediate_est} with
\eqref{eqn:Vel-error-estimate-intermediate}.
\end{remark}

\begin{remark}[Extension to higher-order BDF schemes]
As remarked upon in \cite{LO19,BFM19}, the extension to higher oder BDF methods
such as BDF2 is relatively unproblematic. 
\end{remark}


\section{Numerical Examples}
\label{sec:numerical-examples}

The method has been implemented using \texttt{ngsxfem} \cite{ngsxfem}, an
add-on package to the high-order Finite-Element library
\texttt{NGSolve}/\texttt{Netgen} \cite{Sch97,Sch14}. The resulting sparse
linear systems are solved using the direct solver \texttt{Pardiso}, as part of
the \texttt{Intel MKL} library \cite{IntelMKL}.

Full datasets of the computations shown here, as well as scripts to reproduce
the results can be found online: DOI:
\href{https://doi.org/10.5281/zenodo.3647571}{\texttt{10.5281/zenodo.3647571}}.

\subsection{General Set-up}
\label{sec:numerical-examples::subsec:set-up}

As a test case, we consider a basic test case with an analytical solution. 
On the domain $\O(t)=\{\x\in\R^2\st (\x_1-t)^2 + \x_2^2 = \nf{1}{2}\}$ we
consider the analytical solution 
\begin{equation*}
	\u_{ex}(t) = \begin{pmatrix}
		2\pi\x_2\cos(\pi((\x_1-t)^2 + \x_2^2))\\
		-2\pi\x_1\cos(\pi((\x_1-t)^2 + \x_2^2))
	\end{pmatrix}
	\quad\text{and}\quad 
	p_{ex}(t)=\sin(\pi((\x_1-t)^2 + \x_2^2)) - \nf{2}{\pi}.
\end{equation*}
The velocity $\u_{ex}(t)$ then fulfils homogeneous Dirichlet boundary
conditions and we have $p_{ex}(t)\in\Ltwozero{\O_n(t)}$. An illustration of the
initial solution on the initial background mesh can be seen in 
Figure~\ref{fig:AnalyticalSolution}.
The forcing vector is then chosen accordingly as $\f(t)= \partial_t\u_{ex}(t)
-\nu\Delta\u_{ex}(t) + \nabla p_{ex}(t)$. 
 
 \begin{figure}
 	\centering
 	\includegraphics[width=0.4\textwidth]{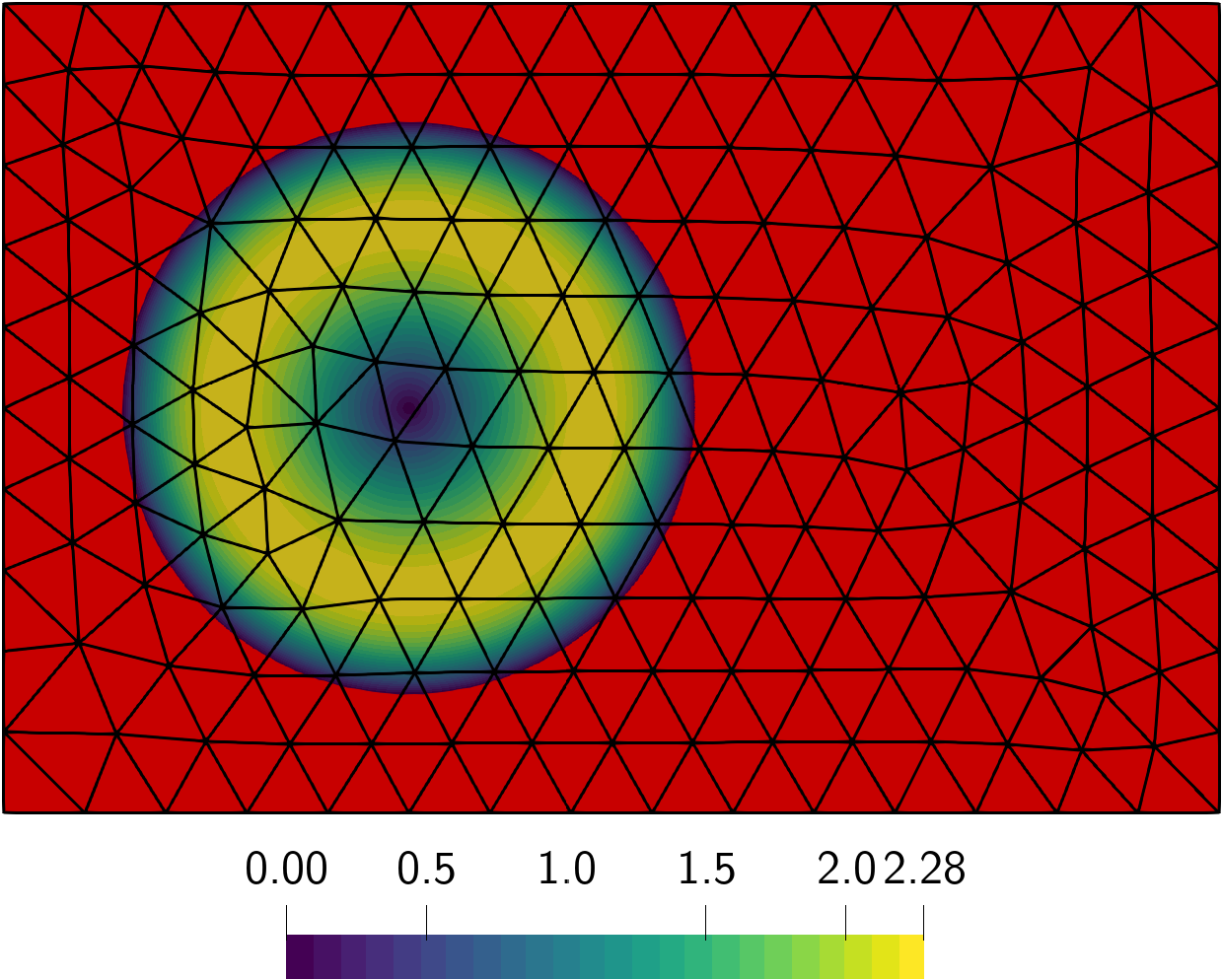}
 	\hskip 0.05\textwidth
 	\includegraphics[width=0.4\textwidth]{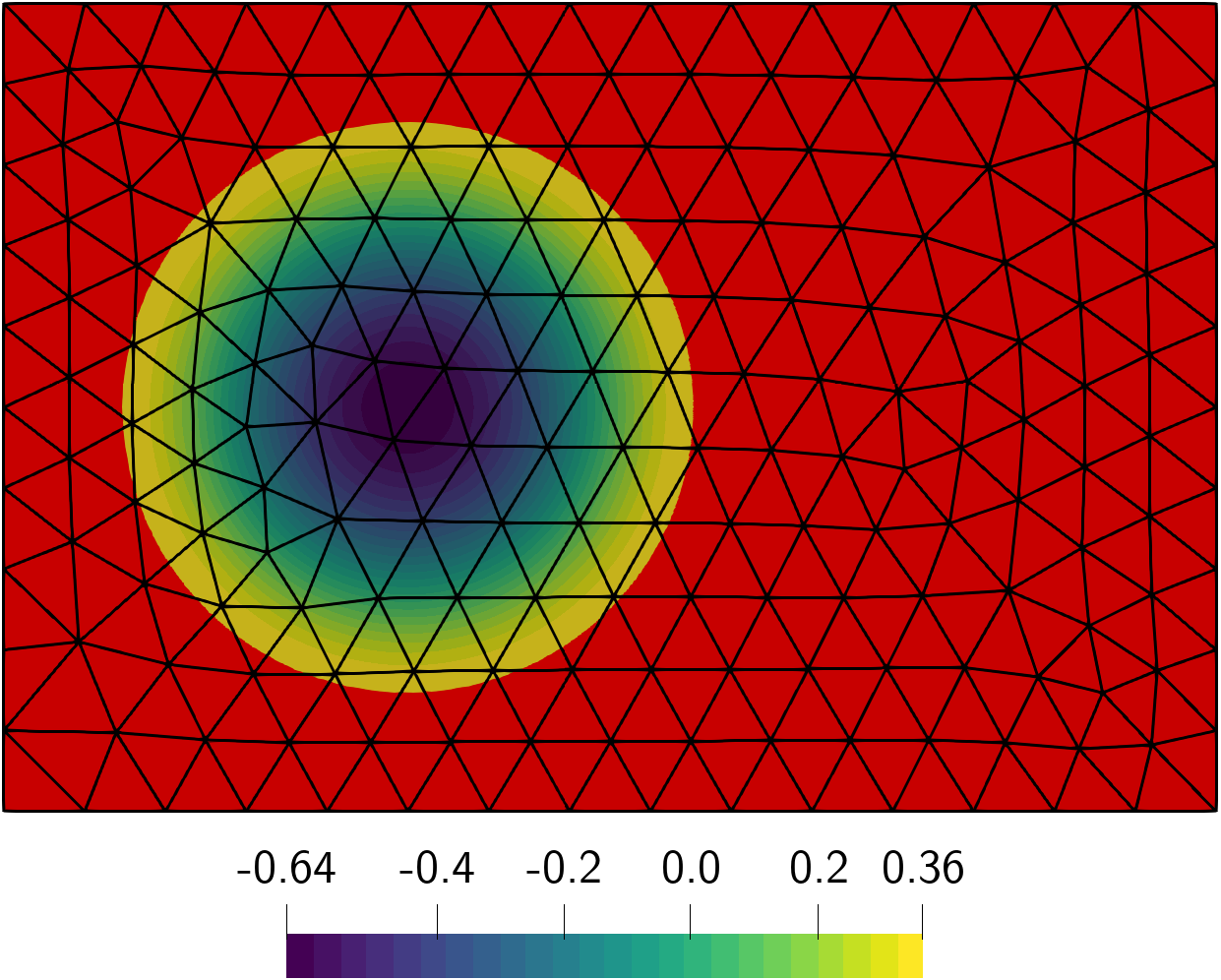}
	\caption{Velocity magnitude and pressure of the analytical solution 
		at $t=0$.}
 	\label{fig:AnalyticalSolution}
 \end{figure}
 
We then consider the following set-up: The time interval is $[0,1]$ and the
background domain is taken as $\Ot = (-1,-2)\times(1,1)$. The maximum interface
speed in our time interval is then $w^n_{\infty}=1$ and for the strip-width we
choose $c_\delta = 1$. Unless otherwise stated, we take the Nitsche penalty
parameter as $\sigma=40k^2$. On the mesh, we consider the lowest order
Taylor-Hood elements, i.e., $k=2$. However, as the geometry is approximated by
a piecewise linear level set function we have $q=1$, so that we cannot expect
to get the full spatial convergence order from the elements.

To quantify the computational results, we will consider the following discrete
space-time errors:
\begin{gather*}
	\nrm{\u_h - \u}{\ell^2(\pazocalbf{L}^2)}^2 \coloneqq \dt\sum_{k=1}^n
		\nrm{\u_h-\u}{\pazocalbf{L}^2(\O_h^k)}^2\qquad
	\nrm{\u_h - \u}{\ell^2(\pazocalbf{H}^1)}^2 \coloneqq \dt\sum_{k=1}^n
		\nrm{\nabla\u_h-\nabla\u}{\pazocalbf{L}^2(\O_h^k)}^2\\
	\nrm{p_h - p}{\ell^2(\pazocalbf{L}^2)}^2 \coloneqq \dt\sum_{k=1}^n
		\nrm{p_h-p}{\pazocal{L}^2(\O_h^k)}^2.
\end{gather*}

\subsection{Parameter dependence}
\label{sec:numerical-examples::subsec:parameter-dependence}

We investigate the robustness of our method with respect to the viscosity
$\nu$, the ghost-penalty stabilisation parameter $\gamma_s$ and the extension
strip-width, in terms of the number of elements $L$. For this we take the
time-step $\dt=0.05$. The viscosity is taken as $\nu\in\{10^0, 10^{-1}, \dots,
10^{-4}\}$ and the stabilisation is taken as $\gamma_s\in\{0.1, 1,
\dots, 10^3\}$. To increase the strip-width $L = \lceil \nf{\delta_h}{h_{max}}
\rceil$ we take $h_{max}=0.1, 0.025, 0.0125$, resulting in $L=1,2,4$
respectively.

The results for the $\ell^2(\pazocalbf{L}^2)$-velocity error of
these computations can be seen in Table~\ref{tab:bdf1-gp-nu-dpependence}.
Here we observe that the method is robust with respect to over-stabilisation.
We also note that within the considered range, the method is also robust with 
respect to number of elements in the extension strip. The method seems to be
particularly robust over a wide range of viscosities. With a decrease of the
viscosity by a factor $10^4$, the velocity-error only increased by a factor of
$50$ on the coarsest mesh, while on the finer meshes, this increase was even
smaller. Furthermore, we see that we have a stable solution for very small
$\nu$, where the assumption $\dt\sim\nu$ made in
Theorem~\ref{theorem:ErrorEstimateEnergyNorm}, does not hold anymore.

\begin{table}
    \centering
    \subfloat[][
        $h_{max}=0.1$ and $\dt=0.05$ and a resulting strip-width of $L=1$
    ]{
    \begin{tabular}{ccccccc}
        \toprule
        $\nu \downarrow \backslash \gamma_s \rightarrow$ & 0.1 & 1 & 10 & 100 &
        		1000 \\ 
        \midrule
        1 &\num{4.0237e-02} &\num{4.1476e-02} &\num{4.4530e-02}
			&\num{6.2856e-02} &\num{1.6381e-01} \\
        0.1 &\num{1.8133e-01} &\num{1.8752e-01} &\num{2.3215e-01}
			&\num{4.5091e-01} &\num{1.1653e+00} \\
        0.01 &\num{3.6680e-01} &\num{4.4631e-01} &\num{7.7678e-01}
			&\num{1.3895e+00} &\num{1.5762e+00} \\
        0.001 &\num{4.2229e-01} &\num{6.7312e-01} &\num{1.1210e+00}
			&\num{1.2630e+00} &\num{1.2834e+00} \\
        0.0001 &\num{2.0520e+00} &\num{3.6001e+00} &\num{4.1413e+00}
			&\num{4.3203e+00} &\num{4.3440e+00} \\
        \bottomrule
    \end{tabular}
    }
    \vskip.5\baselineskip
    \subfloat[][
        $h_{max}=0.025$ and $\dt=0.05$ and a resulting strip-width of $L=2$
    ]{
    \begin{tabular}{ccccccc}
        \toprule
        $\nu \downarrow \backslash \gamma_s \rightarrow$ & 0.1 & 1 & 10 & 100 &
        		1000 \\ 
        \midrule
        1 &\num{2.6631e-02} &\num{2.6654e-02} &\num{2.6672e-02}
			&\num{2.6682e-02} &\num{2.6931e-02} \\
        0.1 &\num{1.6878e-01} &\num{1.6880e-01} &\num{1.6893e-01}
			&\num{1.7050e-01} &\num{1.8729e-01} \\
        0.01 &\num{3.2953e-01} &\num{3.3097e-01} &\num{3.3732e-01}
			&\num{3.8196e-01} &\num{6.5119e-01} \\
        0.001 &\num{3.6652e-01} &\num{3.7217e-01} &\num{4.2448e-01}
			&\num{6.7108e-01} &\num{8.9207e-01} \\
        0.0001 &\num{3.4872e-01} &\num{3.4693e-01} &\num{4.3552e-01}
			&\num{5.5976e-01} &\num{5.6543e-01} \\
        \bottomrule
    \end{tabular}
    }
    \vskip.5\baselineskip
    \subfloat[][
        $h_{max}=0.0125$ and $\dt=0.05$ and a resulting strip-width of $L=4$
    ]{
    \begin{tabular}{ccccccc}
        \toprule
        $\nu \downarrow \backslash \gamma_s$ & 0.1 & 1 & 10 & 100 
        		& 1000 \\ 
        \midrule
        1 &\num{2.6040e-02} &\num{2.6045e-02} &\num{2.6048e-02}
			&\num{2.6051e-02} &\num{2.6044e-02} \\
        0.1 &\num{1.6827e-01} &\num{1.6829e-01} &\num{1.6830e-01}
			&\num{1.6837e-01} &\num{1.7009e-01} \\
        0.01 &\num{3.2896e-01} &\num{3.2901e-01} &\num{3.2966e-01}
			&\num{3.3608e-01} &\num{3.9686e-01} \\
        0.001 &\num{3.6628e-01} &\num{3.6533e-01} &\num{3.7471e-01}
			&\num{4.5059e-01} &\num{6.9527e-01} \\
        0.0001 &\num{3.6310e-01} &\num{3.6620e-01} &\num{3.9031e-01}
			&\num{4.7977e-01} &\num{5.3987e-01} \\
        \bottomrule
    \end{tabular}
    }
    \caption{$\ell^2(0,T;\pazocalbf{L}^2(\O(t)))$ velocity error for
        the BDF1 method over a range of viscosities and ghost-penalty 
        parameters.}
    \label{tab:bdf1-gp-nu-dpependence}
\end{table}

\begin{remark}
In computations, where we extended the pressure into the same exterior domain
as the velocity, i.e.  $\TSp$, by using ghost penalties based on the same
facets $\Fhnd$ and using the same ghost-penalty parameter (up to $h$ and
$\nu$-scaling)
$\gamma_{s,u}= \gamma_{s,u}' =\gamma_{s,p}=L \gamma_s$ lead to qualitatively
the same results as the results presented here and in the subsequent sections,
only with slightly larger error constants.
\end{remark}

\subsection{Convergence study}
\label{sec:numerical-examples::subsec:bdf1-convergence}

To investigate the asymptotic convergence behaviour of the method in both time
and space, we compute the \emph{experimental order of convergence} in space
($\text{eoc}_x$) and time ($\text{eoc}_t$), based on the errors of two 
successive refinement levels of the respective variable and the finest
refinement level of the other variable. The analysis predicts that spatial
error is corrupted by a factor $\dt^{-1}$. To investigate this numerically, we
also compute the eoc for one refinement in both space and time
($\text{eoc}_{xt}$).

To study the asymptotic temporal and spatial convergence properties of the
analysed method, we consider the following set-up. The viscosity
is chosen as $\nu=10^{-2}$ and the ghost-penalty parameter is $\gamma_s=1$. The
initial time step is $\dt_0=0.1$ and the initial mesh size is $h_0=0.2$. We
consider a series of uniform refinements by a factor $2$ in both time and
space, such that $h_{max}=h_0\cdot 2^{-L_x}$ and $\dt=\dt_0\cdot2^{-L_t}$. The
remaining parameters are as described in
Section~\ref{sec:numerical-examples::subsec:set-up}. In total we make 4 spatial
and 8 temporal refinements.

The results for all three discrete space-time norms can be seen in
Table~\ref{tab:bdf1-convergence}. With respect to time, we see the expected
linear convergence ($\text{eoc}_t \approx 1$) in all three considered norms.
With respect to space ($\text{eoc}_x$) we see lower than optimal convergence
rates, before the temporal error begins to dominate. However the rates are also
higher than the expected rates if the geometry error was dominant. We attribute
this to an interplay between the geometry error and the
$1/\nu$ scaled consistency error from the ghost-penalties.

With respect to space ($\text{eoc}_x$) we observe at least second order
convergence in all norms. This is as good as the best approximation error for
the $\ell^2(\pazocal{L}^2)$-norm of the pressure and the
$\ell^2(\pazocalbf{H}^2)$-norm of the velocity error while for the 
$\ell^2(\pazocalbf{L}^2)$ velocity error we see a convergence rate which is
worse than its approximation error. This however is to be expected if the
geometry errors with $q=1$ are taken into consideration.

To check whether the factor $(h^{2q} + h^{2m})/\dt$ is observable, we consider
joint refinement of both time and space with $L_t=L_x+4$ for which the the
theory predicts a loss half an order of convergence. However, in
Table~\ref{tab:bdf1-convergence} we see that $\text{eox}_{tx} \approx
\text{eox}_{x}$. This suggests that this part of the analysis is indeed not
sharp, as discussed and expected above.

For the $\ell^2(\pazocal{L}^2)$-pressure error we observe that the experimental
order of convergence in space is higher than expected. This suggests that the
velocity error on the right-hand side of the pressure estimate is dominating
term here.

\begin{table}
    \centering
    \subfloat[][
        $\ell^2(0,T;\pazocalbf{L}^2(\O(t)))$ velocity error.
        \label{tab:BDF1-l2l2v}
    ]{
	\begin{tabular}{ccccccc}
		\toprule
		$L_t \downarrow \backslash L_x \rightarrow$ & 0 & 1 & 2 & 3 & 4 &
			$\text{eoc}_{t}$ \\
		\midrule
		0 &\num{1.2883e+00} &\num{7.4056e-01} &\num{6.5999e-01} &\num{6.4856e-01}
			&\num{6.5360e-01} & -- \\
		1 &\num{1.0210e+00} &\num{4.4098e-01} &\num{3.4341e-01} &\num{3.3113e-01}
			&\num{3.2905e-01} & 0.99 \\
		2 &\num{8.2361e-01} &\num{2.7552e-01} &\num{1.8188e-01} &\num{1.6808e-01}
			&\num{1.6616e-01} & 0.99 \\
		3 &\num{7.3486e-01} &\num{1.9492e-01} &\num{9.8726e-02} &\num{8.5612e-02}
			&\num{8.3691e-02} & 0.99 \\
		4 &\num{6.9090e-01} &\num{1.5642e-01} &\num{5.7087e-02} &\num{4.3802e-02}
			&\num{4.2142e-02} & 0.99 \\
		5 &\num{6.6915e-01} &\num{1.3886e-01} &\num{3.7102e-02} &\num{2.2827e-02}
			&\num{2.1230e-02} & 0.99 \\
		6 &\num{6.5810e-01} &\num{1.3035e-01} &\num{2.7743e-02} &\num{1.2362e-02}
			&\num{1.0740e-02} & 0.98 \\
		7 &\num{6.5260e-01} &\num{1.2638e-01} &\num{2.3462e-02} &\num{7.1958e-03}
			&\num{5.4877e-03} & 0.97 \\
		8 &\num{6.4990e-01} &\num{1.2439e-01} &\num{2.1489e-02} &\num{4.7012e-03}
			&\num{2.8624e-03} & 0.94 \\
		\cmidrule(lr){1-7}
		$\text{eoc}_{x}$ & -- & 2.39 & 2.53 & 2.19 & 0.72 & \\
		$\text{eoc}_{xt}$ & -- & 2.31 & 2.32 & 1.95 & 1.33 & \\
		\bottomrule
	\end{tabular}
    }
    \vskip\baselineskip
    \subfloat[][
        $\ell^2(0,T;\pazocalbf{H}^1(\O(t)))$ velocity error.
        \label{tab:BDF1-l2h1v}
    ]{
	\begin{tabular}{ccccccc}
		\toprule
		$L_t \downarrow \backslash L_x \rightarrow$ & 0 & 1 & 2 & 3 & 4 &
			$\text{eoc}_{t}$ \\
		\midrule
		0 &\num{8.9241e+00} &\num{5.7944e+00} &\num{5.4193e+00} &\num{5.6041e+00}
			&\num{6.2810e+00} & -- \\
		1 &\num{7.8899e+00} &\num{3.9672e+00} &\num{3.2033e+00} &\num{3.2441e+00}
			&\num{3.3660e+00} & 0.90 \\
		2 &\num{6.8992e+00} &\num{2.8372e+00} &\num{1.8868e+00} &\num{1.8398e+00}
			&\num{1.8877e+00} & 0.83 \\
		3 &\num{6.4781e+00} &\num{2.3174e+00} &\num{1.1455e+00} &\num{1.0146e+00}
			&\num{1.0340e+00} & 0.87 \\
		4 &\num{6.2693e+00} &\num{2.0905e+00} &\num{7.5937e-01} &\num{5.5232e-01}
			&\num{5.5041e-01} & 0.91 \\
		5 &\num{6.1709e+00} &\num{2.0002e+00} &\num{5.9228e-01} &\num{3.0337e-01}
			&\num{2.8711e-01} & 0.94 \\
		6 &\num{6.1207e+00} &\num{1.9601e+00} &\num{5.2777e-01} &\num{1.7898e-01}
			&\num{1.4765e-01} & 0.96 \\
		7 &\num{6.0964e+00} &\num{1.9434e+00} &\num{5.0361e-01} &\num{1.2366e-01}
			&\num{7.5832e-02} & 0.96 \\
		8 &\num{6.0849e+00} &\num{1.9355e+00} &\num{4.9410e-01} &\num{1.0245e-01}
			&\num{3.9938e-02} & 0.93 \\
		\cmidrule(lr){1-7}
		$\text{eoc}_{x}$ & -- & 1.65 & 1.97 & 2.27 & 1.36 & \\
		$\text{eoc}_{xt}$ & -- & 1.65 & 1.92 & 2.09 & 1.63 & \\
		\bottomrule
	\end{tabular}
    }
    \vskip\baselineskip
    \subfloat[][
        $\ell^2(0,T;\pazocal{L}^2(\O(t)))$ pressure error.
        \label{tab:BDF1-l2l2p}
    ]{
	\begin{tabular}{ccccccc}
		\toprule
		$L_t \downarrow \backslash L_x \rightarrow$ & 0 & 1 & 2 & 3 & 4 &
			$\text{eoc}_{t}$ \\
		\midrule
		0 &\num{7.2646e-01} &\num{4.4298e-01} &\num{4.1308e-01} &\num{4.2940e-01}
			&\num{4.2461e-01} & -- \\
		1 &\num{5.9283e-01} &\num{2.6494e-01} &\num{2.0480e-01} &\num{1.9921e-01}
			&\num{2.0135e-01} & 1.08 \\
		2 &\num{4.8527e-01} &\num{1.6672e-01} &\num{1.0823e-01} &\num{9.7899e-02}
			&\num{9.6821e-02} & 1.06 \\
		3 &\num{4.4056e-01} &\num{1.1942e-01} &\num{5.9288e-02} &\num{4.9614e-02}
			&\num{4.8036e-02} & 1.01 \\
		4 &\num{4.2658e-01} &\num{9.6590e-02} &\num{3.5067e-02} &\num{2.5428e-02}
			&\num{2.4078e-02} & 1.00 \\
		5 &\num{4.3110e-01} &\num{8.6692e-02} &\num{2.3408e-02} &\num{1.3408e-02}
			&\num{1.2114e-02} & 0.99 \\
		6 &\num{4.5385e-01} &\num{8.3047e-02} &\num{1.7867e-02} &\num{7.4406e-03}
			&\num{6.1440e-03} & 0.98 \\
		7 &\num{5.0066e-01} &\num{8.3405e-02} &\num{1.5421e-02} &\num{4.4924e-03}
			&\num{3.1644e-03} & 0.96 \\
		8 &\num{5.8342e-01} &\num{8.7021e-02} &\num{1.4615e-02} &\num{3.0640e-03}
			&\num{1.6778e-03} & 0.92 \\
		\cmidrule(lr){1-7}
		$\text{eoc}_{x}$ & -- & 2.75 & 2.57 & 2.25 & 0.87 & \\
		$\text{eoc}_{xt}$ & -- & 2.30 & 2.28 & 1.99 & 1.42 & \\
		\bottomrule
	\end{tabular}
    }
    \caption{Mesh-size and time-step convergence for the BDF1 method with
        $\nu=10^{-2}$.}
    \label{tab:bdf1-convergence}
\end{table}

\subsection{Extension to higher order}

\subsubsection{BDF2 time discretisation}
\label{sec:numerical-examples::subsubsec:bdf2-convergence}

As an extension to the presented method, we consider the BDF2 formula to
discretise the time-derivative. To ensure that the appropriate solution history
is available on the necessary elements, we increase the extension strip with
the choice $\delta_h = 2 c_\delta w_n^\infty \dt$. 

We investigate the convergence properties in both time and space. To this end
we take the same basic set up as in
Section~\ref{sec:numerical-examples::subsec:bdf1-convergence}. However we take
$L_x=0,\dots,5$ and $L_t=0,\dots,7$. The results from these computations can be
seen in Table~\ref{tab:bdf2-convergence}.

We observe that with $L_t=7$, the spatial error is dominant on all meshes in
all three norms and that $\text{eoc}_x$ similar to the BDF1 case.

With respect to time, we see second order of convergence ($\text{eoc}_t \approx
2$), while the temporal error is dominant. There are also some stability issues
for large time steps and large $L=16, 32$. However these results are outside
the time-step/viscosity range covered by our theory.

\begin{table}
    \centering
    \subfloat[][
        $\ell^2(0,T;\pazocalbf{L}^2(\O(t)))$ velocity error.
        \label{tab:BDF2-l2l2v}
    ]{
	\begin{tabular}{cccccccc}
		\toprule
		$L_t \downarrow \backslash L_x \rightarrow$ & 0 & 1 & 2 & 3 & 4 & 5 &
			$\text{eoc}_{t}$ \\
		\midrule
		0 &\num{1.3932e+00} &\num{5.6598e-01} &\num{2.4200e-01} &\num{9.9798e-02}
			&\num{1.2444e-01} &\num{1.6709e+00} & -- \\
		1 &\num{1.0988e+00} &\num{2.9700e-01} &\num{1.0703e-01} &\num{3.4729e-02}
			&\num{1.8507e-02} &\num{2.4022e-01} & 2.80 \\
		2 &\num{9.1132e-01} &\num{2.1315e-01} &\num{4.8972e-02} &\num{1.3415e-02}
			&\num{4.6137e-03} &\num{4.0142e-03} & 5.90 \\
		3 &\num{7.6252e-01} &\num{1.6031e-01} &\num{3.3476e-02} &\num{5.7758e-03}
			&\num{1.4420e-03} &\num{9.3571e-04} & 2.10 \\
		4 &\num{7.0052e-01} &\num{1.3991e-01} &\num{2.5888e-02} &\num{4.1726e-03}
			&\num{5.9052e-04} &\num{2.3531e-04} & 1.99 \\
		5 &\num{6.7284e-01} &\num{1.3084e-01} &\num{2.2381e-02} &\num{3.1837e-03}
			&\num{4.5191e-04} &\num{8.3283e-05} & 1.50 \\
		6 &\num{6.6012e-01} &\num{1.2664e-01} &\num{2.0984e-02} &\num{2.8114e-03}
			&\num{3.7218e-04} &\num{6.4434e-05} & 0.37 \\
		7 &\num{6.5380e-01} &\num{1.2450e-01} &\num{2.0324e-02} &\num{2.6568e-03}
			&\num{3.4211e-04} &\num{6.0222e-05} & 0.10 \\
		\cmidrule(lr){1-8}
		$\text{eoc}_{x}$ & -- & 2.39 & 2.61 & 2.94 & 2.96 & 2.51 & \\
		\bottomrule
	\end{tabular}
    }
    \vskip.3\baselineskip
    \subfloat[][
        $\ell^2(0,T;\pazocalbf{H}^1(\O(t)))$ velocity error.
        \label{tab:BDF2-l2h1v}  
    ]{
	\begin{tabular}{cccccccc}
		\toprule
		$L_t \downarrow \backslash L_x \rightarrow$ & 0 & 1 & 2 & 3 & 4 & 5 &
			$\text{eoc}_{t}$ \\
		\midrule
		0 &\num{9.2161e+00} &\num{5.0010e+00} &\num{2.7341e+00} &\num{1.2565e+00}
			&\num{2.5993e+00} &\num{4.2987e+01} & -- \\
		1 &\num{8.2298e+00} &\num{3.3486e+00} &\num{1.5616e+00} &\num{6.0136e-01}
			&\num{2.5936e-01} &\num{1.0491e+01} & 2.03 \\
		2 &\num{7.5469e+00} &\num{2.7354e+00} &\num{8.9500e-01} &\num{3.0682e-01}
			&\num{9.5472e-02} &\num{5.2027e-02} & 7.66 \\
		3 &\num{6.7226e+00} &\num{2.2620e+00} &\num{6.9895e-01} &\num{1.6253e-01}
			&\num{4.4962e-02} &\num{1.5046e-02} & 1.79 \\
		4 &\num{6.3677e+00} &\num{2.0830e+00} &\num{5.9003e-01} &\num{1.3305e-01}
			&\num{2.2486e-02} &\num{5.8579e-03} & 1.36 \\
		5 &\num{6.2153e+00} &\num{2.0022e+00} &\num{5.2983e-01} &\num{1.0712e-01}
			&\num{1.8914e-02} &\num{2.7413e-03} & 1.10 \\
		6 &\num{6.1440e+00} &\num{1.9643e+00} &\num{5.0734e-01} &\num{9.7238e-02}
			&\num{1.5060e-02} &\num{2.3225e-03} & 0.24 \\
		7 &\num{6.1089e+00} &\num{1.9454e+00} &\num{4.9683e-01} &\num{9.3670e-02}
			&\num{1.3500e-02} &\num{1.9016e-03} & 0.29 \\
		\cmidrule(lr){1-8}
		$\text{eoc}_{x}$ & -- & 1.65 & 1.97 & 2.41 & 2.79 & 2.83 & \\
		\bottomrule
	\end{tabular}
    }
    \vskip.3\baselineskip
    \subfloat[][
        $\ell^2(0,T;\pazocal{L}^2(\O(t)))$ pressure error.
        \label{tab:BDF2-l2l2p}
    ]{
	\begin{tabular}{cccccccc}
		\toprule
		$L_t \downarrow \backslash L_x \rightarrow$ & 0 & 1 & 2 & 3 & 4 & 5 &
			$\text{eoc}_{t}$ \\
		\midrule
		0 &\num{8.2044e-01} &\num{3.8054e-01} &\num{2.2387e-01} &\num{1.4999e-01}
			&\num{3.4770e-01} &\num{3.9970e+00} & -- \\
		1 &\num{6.5967e-01} &\num{1.5732e-01} &\num{6.7703e-02} &\num{4.4799e-02}
			&\num{4.0588e-02} &\num{9.5671e-01} & 2.06 \\
		2 &\num{5.6018e-01} &\num{1.2219e-01} &\num{2.3494e-02} &\num{1.0835e-02}
			&\num{1.1464e-02} &\num{1.0501e-02} & 6.51 \\
		3 &\num{4.7649e-01} &\num{9.6691e-02} &\num{1.8724e-02} &\num{2.5424e-03}
			&\num{2.6540e-03} &\num{2.8298e-03} & 1.89 \\
		4 &\num{4.5880e-01} &\num{8.7285e-02} &\num{1.5596e-02} &\num{2.2882e-03}
			&\num{5.0926e-04} &\num{6.8121e-04} & 2.05 \\
		5 &\num{4.7882e-01} &\num{8.4353e-02} &\num{1.4170e-02} &\num{2.0108e-03}
			&\num{2.5893e-04} &\num{1.4580e-04} & 2.22 \\
		6 &\num{5.3120e-01} &\num{8.5391e-02} &\num{1.3851e-02} &\num{1.8687e-03}
			&\num{2.5394e-04} &\num{4.2939e-05} & 1.76 \\
		7 &\num{6.2764e-01} &\num{8.9866e-02} &\num{1.4109e-02} &\num{1.8243e-03}
			&\num{2.4601e-04} &\num{4.1031e-05} & 0.07 \\
		\cmidrule(lr){1-8}
		$\text{eoc}_{x}$ & -- & 2.80 & 2.67 & 2.95 & 2.89 & 2.58 & \\
		\bottomrule
	\end{tabular}
	}
    \caption{Mesh-size and time-step convergence for the BDF2 method with
        $\nu=10^{-2}$.}
    \label{tab:bdf2-convergence}
\end{table}

\subsubsection{Higher-order spatial convergence}
\label{sec:numerical-examples::subsec:higher-order}

As we have seen in the theory and some of the numerical results of the
previous section, the piecewise linear level set approach leads to a loss in
the maximal spatial convergence rate of the velocity. A simple and effective --
though not very efficient -- way to hide the geometrical error and reveal the
underling discretisation error, is the approximation of the the geometry based
on a piecewise linear level set after $s$ subdivisions of cut elements. As the
resulting geometry approximation order is then of order
$\OO\big((\frac{h}{2^s})^2\big)$. The drawback of this approach is a non
optimal scaling for $h\to 0$, since $s$ must increase to balance the
discretisation error on fine meshes.

To balance the geometry error with the discretisation error, we take
$s=\OO\big(\log_2(\nf{1}{h})\big)$ yielding an effective geometry approximation
of $\OO\big(h^{q+1})$ with $q=3$.
Let us stress that the purpose of this ``trick'' is to hide the geometry error
and reveal the underlying remaining discretisation errors and is not meant as a
solution to the problem of approximating unfitted geometries in general.

Using the same set-up as in
Section~\ref{sec:numerical-examples::subsubsec:bdf2-convergence} and the
BDF2 formula to discretise the time-derivative, such that the spatial error is
dominant for larger time-steps, we compute our test-problem over a series of
uniformly refined meshes. The time-step is chosen as $\dt=0.1\cdot 2^{-8}$ and
we consider Taylor-Hood elements with $k=2$ and $k=3$.

The results of these computations can be seen in
Figure~\ref{fig:bdf2-subdiv-convergence}. Here we can see, that we have
recovered optimal order of convergence in both velocity norms. We also note,
that the pressure error converges as before, at an order higher than expected.

\begin{figure}
	\centering
	\includegraphics{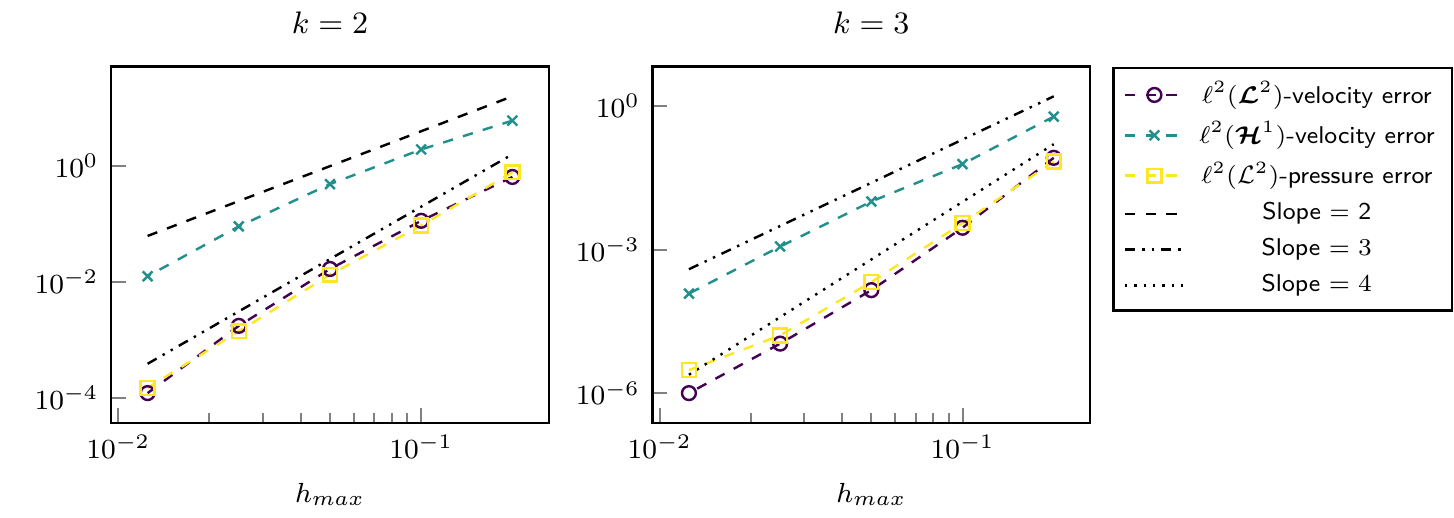}
	\caption{Results with $k=2, 3$ using subdivisions for better approximation of
		the domain boundary.}
	\label{fig:bdf2-subdiv-convergence}
\end{figure}


\section{Conclusions and open problems}
\label{sec:conclusion-open-problems}

We have presented, analysed and implemented a fully Eulerian, inf-sup stable,
unfitted finite element method for the time dependent Stokes problem on
evolving domains. This followed the previous work in \cite{LO19} for
convection-diffusion problems and \cite{BFM19} for the time-dependent Stokes
problem using equal order, pressure stabilised elements. The method is simple
to implement in existing unfitted finite element libraries, since all operators
are standard in unfitted finite elements. 

In the analysis, we have seen that the geometrical consistency error,
introduced by
integrating over discrete, approximated domains $\Onh$ plays a major role and
causes additional coupling between the velocity and pressure errors which is
non-standard. Furthermore, since the time derivative approximation term
$\frac{1}{\dt}(\u^n_h - \u^{n-1}_h)$ is not weakly divergence free with respect
to the pressure space $Q_h^n$, we obtained error estimates which are dependent
on inverse powers of $\dt$. Fortunately, this dependence on $\dt^{-1}$ was not
observable in our numerical results, suggesting that an estimate of
$\nrm{\frac{1}{\dt}(\u^n_h-\u^{n-1}_h)}{-1}$ independent (of negative powers)
of $\dt$ should hold. 

In our numerical experiments, we have also seen, that the geometrical
error---if low order approximations are used---can indeed be a dominating
factor in the final error, corrupting the optimal convergence rate of the
$\PP^2/\PP^1$ finite element pair. We used a simple, but not efficient,
approach for avoiding this in the considered test cases for both $\PP^2/\PP^1$
and $\PP^3/\PP^2$ elements.

Let us now discuss some issues where further refinement of the method and
analysis seem to be of some benefit.

As mentioned above, it seems that it should be possible to prove an estimate
of $\nrm{\frac{1}{\dt}(\u^n_h-\u^{n-1}_h)}{-1}$ independent of $\dt^{-1}$ for
$\u^{n-1}_h$ for which we have $b_h^n(q_h,\u^{n-1}_h)\neq 0$ for some
$q_h\in Q_h^n$.

In Section~\ref{sec:numerical-examples::subsec:higher-order} we have shown that
it is possible to recover the optimal order of convergence for the Taylor-Hood
finite element pair if sufficient computational effort is put on the geometry
approximation. However, the subdivision strategy used here is limited in
its application, due to the large number of subdivisions needed after several
global mesh refinements and for higher order elements, which in turn results
in a large amount of memory needed for computations.

One approach to efficiently obtain higher order geometry approximations for
level set domains has been introduced in \cite{Leh16} for \emph{stationary}
domains. That approaches relies on a slight local deformations of the mesh,
depending on the level set function. For unsteady problems the corresponding
mesh becomes time-dependent for which efficient and accurate transfer
operations are needed from one mesh to the other. This will be discussed in a
forthcoming paper.


\section*{Acknowledgements}
HvW and TR acknowledge support by the Deutsche Forschungsgemeinschaft (DFG,
German Research Foundation) - 314838170, GRK 2297 MathCoRe. TR further
acknowledges supported by the Federal Ministry of Education and Research of
Germany (project number 05M16NMA).
CL gratefully acknowledges funding by the Deutsche Forschungsgemeinschaft (DFG,
German Research Foundation) within the project ``LE 3726/1-1''.
\renewcommand{\bibfont}{\normalfont\footnotesize}
\setlength\bibitemsep{.5\itemsep}
\printbibliography

\end{document}
